\theoremstyle{plain}
\newtheorem{thm}{Theorem}
\newtheorem{lem}[thm]{Lemma}
\newtheorem{cor}[thm]{Corollary}
\newtheorem{prop}[thm]{Proposition}
\theoremstyle{definition}
\newtheorem{rem}[thm]{Remark}
\Crefname{thm}{Theorem}{Theorems}
\Crefname{lem}{Lemma}{Lemmas}
\Crefname{cor}{Corollary}{Corollaries}
\Crefname{conj}{Conjecture}{Conjectures}
\Crefname{defn}{Definition}{Definitions}
\Crefname{prop}{Proposition}{Propositions}
\Crefname{exam}{Example}{Examples}
\Crefname{prob}{Problem}{Problems}
\Crefname{rem}{Remark}{Remarks}
\crefname{thm}{theorem}{theorems}
\crefname{lem}{lemma}{lemma}
\crefname{cor}{corollary}{corollaries}
\crefname{conj}{conjecture}{conjectures}
\crefname{defn}{definition}{definitions}
\crefname{prop}{proposition}{propositions}
\crefname{exam}{example}{examples}
\crefname{prob}{problem}{problems}
\crefname{rem}{remark}{remarks}
\let\ORGhypersetup\hypersetup
\protected\def\hypersetup{\ORGhypersetup}
  \def\hypersetup#1{}%
  \let\Cref\crtCref
  \let\cref\crtcref
\DeclareMathOperator{\diag}{diag}
\DeclareMathOperator{\Tr}{Tr}
\DeclareMathOperator{\Horn}{Horn}
\DeclareMathOperator{\Tetra}{Tetra}
\DeclareMathOperator{\Lie}{Lie}
\DeclareMathOperator{\Mat}{Mat}
\DeclareMathOperator{\End}{End}
\DeclareMathOperator{\Hom}{Hom}
\DeclareMathOperator{\BC}{BC} 
\DeclareMathOperator{\rank}{rank}
\DeclareMathOperator{\eig}{eig}
\DeclareMathOperator{\id}{id}
\DeclareMathOperator*{\expect}{\mathbb{E}}
\newcommand{\card}[1]{\left|#1\right|}
\newcommand{\diff}{\mathrm{d}}
\newcommand{\orbit}{\mathcal O}
\newcommand{\polyk}[1]{(k\!+\!1)^{#1}}
\newcommand{\norm}[1]{\left\lVert#1\right\rVert}
\newcommand{\abs}[1]{\left|#1\right|}
\newcommand{\kl}[2]{D\!\left(#1\middle\|#2\right)} 
\newcommand{\SW}[1]{P_{#1}}
\newcommand{\herm}[1]{\mathcal H_{#1}}
\newcommand{\hermp}[1]{\mathcal H^{+}_{#1}}
\newcommand{\undomweights}[1]{\mathbb Z^{\geq}_{#1}}
\newcommand{\undomweightsp}[1]{\mathbb Z^{+}_{#1}}
\newcommand{\eigs}[1]{\mathbb R^{\geq}_{#1}}
\newcommand{\eigsp}[1]{\mathbb R^{+}_{#1}}
\newcommand{\ot}{\otimes} 
\newcommand{\vp}{\varphi} 
\newcommand{\projl}{\Pi_{(\ya \yb) \yd}^{\yc, \ye}}
\newcommand{\projr}{\Pi_{\ya (\yb \yd)}^{\ye, \yf}}
\newcommand{\ea}{a}
\newcommand{\eb}{b}
\newcommand{\ec}{c}
\newcommand{\ed}{d}
\newcommand{\ee}{e}
\newcommand{\ef}{f}
\newcommand{\ex}{x}
\newcommand{\ey}{y}
\newcommand{\ez}{z}
\newcommand{\tea}{\Tr[\ea]}
\newcommand{\teb}{\Tr[\eb]}
\newcommand{\tec}{\Tr[\ec]}
\newcommand{\ted}{\Tr[\ed]}
\newcommand{\tee}{\Tr[\ee]}
\newcommand{\tef}{\Tr[\ef]}
\newcommand{\tex}{\Tr[\ex]}
\newcommand{\tey}{\Tr[\ey]}
\newcommand{\ya}{\alpha}
\newcommand{\yb}{\beta}
\newcommand{\yc}{\gamma}
\newcommand{\yd}{\delta}
\newcommand{\ye}{\epsilon}
\newcommand{\yf}{\phi}
\newcommand{\yx}{\lambda}
\newcommand{\sya}{|\ya|}
\newcommand{\syb}{|\yb|}
\newcommand{\syc}{|\yc|}
\newcommand{\syd}{|\yd|}
\newcommand{\sye}{|\ye|}
\newcommand{\syf}{|\yf|}
\newcommand{\syx}{|\yx|}
\newcommand{\tightsextuple}[1]{
\begingroup
\arraycolsep=1.5pt
\renewcommand{\arraystretch}{0.3}
\begin{array}{@{}cccccc@{}}#1\end{array}
\endgroup
}
\newcommand{\tightcurlysixj}[1]{
\begingroup
\arraycolsep=1.5pt
\renewcommand{\arraystretch}{0.3}
\left\{\!\begin{array}{ccc}
    #1
\end{array}\!\right\}
\endgroup
}
\newcommand{\tightparensixj}[1]{
\begingroup
\arraycolsep=1.5pt
\renewcommand{\arraystretch}{0.3}
\left(\!\begin{array}{ccc}
    #1
\end{array}\!\right)
\endgroup
}
\newcommand{\abcdefsymbol}{\tightcurlysixj{\alpha & \beta & \gamma \\ \delta & \epsilon & \phi}}
\newcommand{\abcdefsymbolk}{\tightcurlysixj{\alpha_k & \beta_k & \gamma_k \\ \delta_k & \epsilon_k & \phi_k}}
\newcommand{\abcdefsymbolcefstarred}{\tightcurlysixj{\alpha & \beta & \gamma^* \\ \delta & \epsilon^* & \phi^*}}
\newcommand{\tightsinglesubstack}[2]{
\begingroup
    \renewcommand{\arraystretch}{0.3}
    \begin{array}{c@{}} #1 \\ {\scriptscriptstyle #2} \end{array}
\endgroup
}
\newcommand{\tightdoublesubstack}[3]{
\begingroup
    \renewcommand{\arraystretch}{0.3}
    \begin{array}{c@{}} #1 \\ {\scriptscriptstyle #2} \\ {\scriptscriptstyle #3} \end{array}
\endgroup
}
\newcommand{\tighttriplesubstack}[4]{
\begingroup
    \renewcommand{\arraystretch}{0.3}
    \begin{array}{c@{}} #1 \\ {\scriptscriptstyle #2} \\ {\scriptscriptstyle #3} \\ {\scriptscriptstyle #4} \end{array}
\endgroup
}
\begin{document}

\title[Tetrahedral Horn \& U(N) 6j asymptotics]%
{The tetrahedral Horn problem and \\ asymptotics of \texorpdfstring{$\boldsymbol{U(n)}$ $\boldsymbol{6j}$}{U(n) 6j} symbols}

\author{Anton Alekseev\texorpdfstring{$^1$}{}}
\email{anton.alekseev@unige.ch}
\author{Matthias Christandl\texorpdfstring{$^2$}{}}
\email{christandl@math.ku.dk}
\author{Thomas C. Fraser\texorpdfstring{$^2$}{}}
\email{tcf@math.ku.dk}

\address{\texorpdfstring{$^1$}{}Department of Mathematics, University of Geneva, 7-9 rue du Conseil Général, 1211 Geneva 4, Switzerland}
\address{\texorpdfstring{$^2$}{}Department of Mathematical Sciences, University of Copenhagen, Universitetsparken 5, 2100 Denmark}

\date{\today}

\begin{abstract}
    Horn's problem is concerned with characterizing the eigenvalues $(\ea,\eb,\ec)$ of Hermitian matrices $(A,B,C)$ satisfying the constraint $A+B=C$ and forming the edges of a triangle in the space of Hermitian matrices.
    It has deep connections to tensor product invariants, Littlewood-Richardson coefficients, geometric invariant theory and the intersection theory of Schubert varieties.
    This paper concerns the \textit{tetrahedral Horn problem} which aims to characterize the tuples of eigenvalues $(\ea,\eb,\ec,\ed,\ee,\ef)$ of Hermitian matrices $(A,B,C,D,E,F)$ forming the edges of a tetrahedron, and thus satisfying the constraints $A+B=C$, $B+D=F$, $D+C=E$ and $A+F=E$.

    Here we derive new inequalities satisfied by the Schur-polynomials of such eigenvalues and, using eigenvalue estimation techniques from quantum information theory, prove their satisfaction up to degree $k$ implies the existence of approximate solutions with error $O(\ln k / k)$.
    Moreover, the existence of these tetrahedra is related to the semiclassical asymptotics of the $6j$-symbols for the unitary group $U(n)$, which are maps between multiplicity spaces that encode the associativity relation for tensor products of irreducible representations.
    Using our techniques, we prove the asymptotics of norms of these $6j$-symbols are either inverse-polynomial or exponential depending on whether there exists such tetrahedra of Hermitian matrices.
\end{abstract}

\maketitle
\tableofcontents

\section{Introduction}

\subsubsection*{The Horn problem}
Let $\herm{n}$ denote the set of $n\times n$ Hermitian matrices, $\eigs{n}$ the corresponding set of ordered eigenvalues and $\eig : \herm{n} \to \eigs{n}$ the assignment map.
The Horn problem is a classical linear algebra problem asking to determine the set of possible eigenvalues for $A,B,C \in \herm{n}$ satisfying the constraint $A+B=C$, denoted by
\begin{equation}
    \Horn(n) \coloneqq \left\{ (\eig A, \eig B, \eig C) \middle| A + B = C \right\}.
\end{equation}
Although the problem has been investigated by numerous authors for over a century \cite{weyl1912asymptotische,fan1949theorem,lidskii1953proper,wielandt1955extremum,thompson1971eigenvalues,thompson1970eigenvalues}, it is named after \citeauthor{horn1962eigenvalues} who conjectured a solution to the problem in the form of an inductively defined, finite set of homogeneous linear inequalities \cite{horn1962eigenvalues} together with the trace equality
\begin{equation}
    \Tr[c] = \Tr[a] + \Tr[b],
\end{equation}
where $\tex \coloneqq \sum_{i=1}^{n} \ex_i$ for $\ex \in \eigs{n}$.
In \citeyear{klyachko1998stable}, \citeauthor{klyachko1998stable} provided a solution to the Horn problem \cite{klyachko1998stable} which, when combined with a proof of the so-called saturation conjecture, due to \citeauthor{knutson1999honeycomb} \cite{knutson1999honeycomb}, constituted a proof of Horn's conjectured solution \cite{fulton1998eigenvalues} (see also \cite{alekseev2017symplectic}).
These linear inequalities, now known as the \textit{Horn inequalities} describe the convex polyhedral cone of admissible values for $(\ea,\eb,\ec) \in \Horn(n)$ \cite{bhatia2001linear,knutson2000symplectic,fulton2000eigenvalues,niculescu2006convex}.

\subsubsection*{The tetrahedral Horn problem}
In this paper, we consider the problem of determining the set of possible eigenvalues for $n\times n$ Hermitian matrices $A,B,C,D,E,F \in \herm{n}$ subject to four constraints, denoted by
\begin{equation}
    \label{eq:tet_cond}
    \Tetra(n) \coloneqq \left\{
        \bigg(
        \begin{array}{c}
            \eig A, \eig B, \eig C, \\
            \eig D, \eig E, \eig F
        \end{array}
        \bigg)
    \middle|
        \begin{array}{cc}
            A+B=C, & B+D=F, \\
            D+C=E, & A+F=E
        \end{array}
    \right\}.
\end{equation}
We refer to this eigenvalue problem as the \textit{tetrahedral} Horn problem because these constraints are naturally associated to the four faces of a tetrahedron in the vector space $\herm{n}$, as depicted in \cref{fig:tetrahedron_labelling_scheme}.
\begin{figure}[ht]
    \centering
    \includegraphics[]{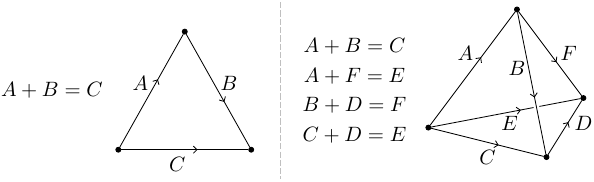}
    \caption{(Left) The Horn problem concerns the eigenvalues of three Hermitian matrices, $A,B,$ and $C$, forming a triangle in $\herm{n}$.
    (Right) The tetrahedral Horn problem concerns the eigenvalues of six Hermitian matrices, $A,B,C,D,E,$ and $F$, forming a tetrahedron in $\herm{n}$.}
    \label{fig:tetrahedron_labelling_scheme}
\end{figure}

Beyond the trace conditions,
\begin{align}
    \label{eq:tet_trace_cond}
    \begin{split}
        \Tr[c] &= \Tr[a] + \Tr[b], \quad
        \Tr[f] = \Tr[b] + \Tr[d], \\
        \Tr[e] &= \Tr[a] + \Tr[f], \quad
        \Tr[e] = \Tr[c] + \Tr[d],
    \end{split}
\end{align}
there are additional inequalities necessarily satisfied by $(\ea,\eb,\ec,\ed,\ee,\ef) \in \Tetra(n)$.
First, observe that the Horn inequalities \cite{horn1962eigenvalues} apply to the eigenvalues of the triples of Hermitian matrices associated to each face: i.e., to $(\ea,\eb,\ec)$, $(\eb,\ed,\ef)$, $(\ea,\ef,\ee)$, and $(\ec,\ed,\ee)$.
Second, there exist inequalities known as submodular or strongly subadditive inequalities which constrain the eigenvalues of the quadruple of Hermitian matrices $(Y,X+Y,Y+Z,X+Y+Z)$ whenever $X,Y,Z$ are taken to be non-negative Hermitian matrices \cite{berndt2015hlawka, tie2011rearrangement, sagnol2013approximation, bouhtou2008optimization, friedland2013submodular, lewin2014family, sra2017determinantoverflow}.
These inequalities are readily applicable to the tetrahedral Horn problem under the identification ($X=A$, $Y=B$ and $Z=D$) wherein they constrain, respectively, the eigenvalues $(\eb,\ec,\ee,\ef)$ of the matrices $(B,C,E,F)$ while ignoring the eigenvalues of $A$ and $D$.
More scarce are inequalities which additionally incorporate the eigenvalues $(\ea,\ed)$ of the matrices $(A,D)$ such as Minkowski-like determinant inequality \cite[Section 6.2]{niculescu2023functions} (originally due to \citeauthor{courtade2016strengthening} \cite{courtade2016strengthening, sra2016reverseoverflow}), or applications of Ptolemny's inequality \cite{schoenberg1952remark} or the Cayley-Menger determinant inequality \cite[Equation 5.2]{sitharam2018handbook} to the space of $n\times n$ Hermitian matrices equipped with the Hilbert-Schmidt inner product.
While each of these previously known inequalities can be viewed as necessary conditions for the tetrahedral Horn problem, they remain insufficient for the tetrahedral Horn problem when $n\geq 3$.

\subsubsection*{The case of \texorpdfstring{$2 \times 2$}{2 by 2} matrices}
Beyond the superficial connection between the imposed constraints and the resulting geometry depicted in \cref{fig:tetrahedron_labelling_scheme}, in the special case of $2\times 2$ Hermitian matrices, this connection is much deeper.
Without loss of generality, it is always possible to restrict attention to the case of \textit{traceless} Hermitian matrices (see \cref{sec:symmetries} for a proof).
Every $2\times 2$ traceless Hermitian matrix can be identified by a three-dimensional vector $v = (v_1, v_2, v_3) \in \mathbb R^{3}$ using the formula:
\begin{equation}
    X = \begin{pmatrix} v_3 & v_1-iv_2 \\ v_1+iv_2 & -v_3 \end{pmatrix},
\end{equation}
in which case, the eigenvalues of $X$ are given by
\begin{equation}
    \eig(X) = (\ell_x, -\ell_x),
\end{equation}
where $\ell_x \coloneqq \sqrt{v_1^2 + v_2^2 + v_3^2}$ is the Euclidean length of the vector $v$.
As a consequence, the solution to the original Horn problem for $n = 2$ is equivalent to the space of possible side-lengths of \textit{triangles} in $\mathbb R^{3}$ \cite{hausmann1996polygon,freidel2010holomorphic,knutson2000symplectic}.
This geometric correspondence generalizes to the tetrahedral Horn problem where the solution for $n=2$ is equivalent to the space of possible side-lengths of \textit{tetrahedra} in $\mathbb R^{3}$.
This geometric knowledge about the solution space then leads to an algebraic solution to the tetrahedral Horn problem for $n = 2$ \cite{richardson1902trigonometry,wirth2009edge};
specifically, the space of possible side-lengths of tetrahedra in $\mathbb R^3$ is described by the conjunction of the triangle inequalities for each of the four faces,
\begin{align}
    \label{eq:quadruplet_triangle_constraints}
    \begin{split}
        \abs{\ell_a-\ell_b} &\leq \ell_c \leq \ell_a+\ell_b, \qquad
        \abs{\ell_b-\ell_d} \leq \ell_f \leq \ell_b+\ell_d, \\
        \abs{\ell_d-\ell_c} &\leq \ell_e \leq \ell_d+\ell_c, \qquad
        \abs{\ell_a-\ell_f} \leq \ell_e \leq \ell_a+\ell_f,
    \end{split}
\end{align}
together with a non-linear constraint of the form \cite{sitharam2018handbook}:
\begin{equation}
    \label{eq:volume_positive_constraint}
    \mathrm{CM}(\ell_a,\ell_b,\ell_c,\ell_d,\ell_e,\ell_f) = \frac{1}{288}\det
    \begin{pmatrix}
        0 & \ell_a^2 & \ell_c^2 & \ell_e^2 & 1 \\
        \ell_a^2 & 0 & \ell_b^2 & \ell_f^2 & 1 \\
        \ell_c^2 & \ell_b^2 & 0 & \ell_d^2 & 1 \\
        \ell_e^2 & \ell_f^2 & \ell_d^2 & 0 & 1 \\
        1 & 1 & 1 & 1       & 0
    \end{pmatrix}
    \geq 0.
\end{equation}
The quantity $\mathrm{CM}(\ell_a,\ell_b,\ell_c,\ell_d,\ell_e,\ell_f)$ is known as a Cayley-Menger determinant; if $\mathrm{CM}(\ell_a,\ell_b,\ell_c,\ell_d,\ell_e,\ell_f)$ is positive and if the triangle inequalities above are satisfied, the value of $\mathrm{CM}(\ell_a,\ell_b,\ell_c,\ell_d,\ell_e,\ell_f)$ is equal to squared volume of the corresponding tetrahedron \cite[Equation 5.2]{sitharam2018handbook}.

Before proceeding, we emphasize the strong qualitative difference between the tetrahedral Horn problem and the original Horn problem; while $\Horn(n)$ is a convex polyhedral cone, $\Tetra(n)$ is neither convex nor polyhedral.
\Cref{fig:tetrahedron_moduli_space_example} visualizes this distinction by depicting a three-dimensional slice of $\Tetra(2)$.
\begin{figure}
    \begin{center}
        \includegraphics[width=0.7\textwidth]{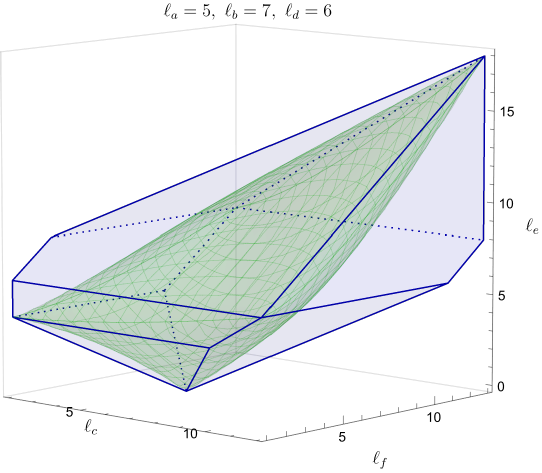}
    \end{center}
    \caption{
        The interior green region depicts a slice of the space of side-lengths $(\ell_a,\ell_b,\ell_c,\ell_d,\ell_e,\ell_f)$ of tetrahedra where $(\ell_a,\ell_b,\ell_d) = (5,7,6)$ and thus depicts a slice of $\Tetra(2)$.
        The exterior blue polytope depicts the values of $(\ell_c,\ell_e,\ell_f)$ which merely satisfy the triangle inequalities, while the interior green region additionally satisfies the Cayley-Menger determinant inequality.
    }
    \label{fig:tetrahedron_moduli_space_example}
\end{figure}

\subsubsection*{Our contributions}
In this paper, we provide new necessary inequalities for the tetrahedral Horn problem and moreover prove their sufficiency.
In order to describe these conditions, we make the following simplifying assumption.
Denote by $\hermp{n} \subset \herm{n}$ the set of non-negative Hermitian matrices, and by $\eigsp{n} \subset \eigs{n}$ the corresponding set of possible eigenvalues.
We claim that the tetrahedral Horn problem can be reduced to the problem of characterizing the subset of non-negative eigenvalues:
\begin{equation}
    \Tetra^+(n) \coloneqq (\eigsp{n})^6 \cap \Tetra(n).
\end{equation}
This restriction to non-negative eigenvalues is justified because $\Tetra(n)$ is closed under certain affine transformations (for more details see \cref{cor:non_negativity} in \cref{sec:symmetries}).
Additionally, we define the following distance measure:
\begin{equation}
    D\!\tightparensixj{\ea & \eb & \ec \\ \ed & \ee & \ef} \coloneqq
    \min_{\substack{C=A+B \\ F=B+D \\ E = C+D \\ E = A+F}}
    \begin{array}{c}
        [
        \norm{ \ea - \eig A }_{1}^2 + \norm{ \eb - \eig B }_{1}^2 + \norm{ \ec - \eig C }_{1}^2 + \\
        \norm{ \ed - \eig D }_{1}^2 + \norm{ \ee - \eig E }_{1}^2 + \norm{ \ef - \eig F }_{1}^2
        ]^{\frac{1}{2}},
    \end{array}
\end{equation}
where $\norm{x}_{1} \coloneqq \sum_{i=1}^{n} \abs{x_i}$ is a distance measure which emerges naturally from the probabilistic techniques used in this paper.
Note that $D\!\tightparensixj{\ea & \eb & \ec \\ \ed & \ee & \ef} \geq 0$ with $D\!\tightparensixj{\ea & \eb & \ec \\ \ed & \ee & \ef} = 0$ only if $(\ea, \eb, \ec, \ed, \ee, \ef) \in \Tetra(n)$.

Our methods rely on the representation theory of the group $G = U(n)$.
We denote by $\undomweights{n} = \{ (\yx_1 \geq \yx_2 \geq \cdots \geq \yx_n) \mid \yx_i \in \mathbb{Z} \}$ the set of dominant weights for the group $G=U(n)$, and
by $\undomweightsp{n} = \{ \yx \in \undomweights{n} \mid \yx_n \geq 0\}$ the set of positive dominant weights, which can be regarded as Young diagrams.
For $\yx \in \undomweightsp{n}$, we denote the size of $\yx$ by $\syx = \sum_{i=1}^n \yx_i$, and the product of the hook lengths of boxes in $\yx$ by $H_{\yx} = \prod_{b\in \yx} h_{\yx}(b)$ (the hook length of box $b$, $h_{\yx}(b)$, is defined as the number of boxes to the right and below $b$ plus one).
The irreducible representations $V_{\yx}$ corresponding to positive dominant weights are polynomial, and they extend to the matrix algebra $\Mat_n(\mathbb{C})$.
It will be convenient to use rescaled characters of representations,
\begin{equation}
    \vp_\yx(X) \coloneq \frac{1}{H_\yx} \chi_\yx(X),
\end{equation}
extended to $X \in \Mat_n(\mathbb{C})$.
Note that if $\ex = \eig X$ are the eigenvalues of $X$, then the character $\chi_\yx(X)$ is equal to the Schur-polynomial of the eigenvalues $s_{\yx}(\ex)$ and therefore, we also define $\vp_{\yx}(\ex) \coloneqq s_{\yx}(\ex)/H_\yx$ by a slight abuse of notation.
Irreducible representations $V_\yx$ of $U(n)$ are equipped with invariant Hermitian structures, and the decomposition of their tensor products,
\begin{equation}
    V_\ya \otimes V_\yb \cong \bigoplus_\yc C_{\ya \yb}^\yc \otimes V_\yc,
\end{equation}
induces Hermitian structures of multiplicity spaces $C_{\ya \yb}^\yc \coloneqq \Hom(V_{\yc}, V_{\ya}\ot V_{\yb})$.
Note that a necessary condition for $\dim C_{\ya \yb}^\yc > 0$ is the size condition $\sya + \syb = \syc$.
The associativity isomorphism between triple tensor products $(V_\ya \ot V_\yb) \ot V_\yd \cong V_\ya \ot (V_\yb \ot V_\yd)$ induces a linear map between multiplicity spaces known as the $6j$-symbol.
Specifically, a $6j$-symbol depends on six irreducible representations, $V_{\ya}, V_{\yb}, V_{\yc}, V_{\yd}, V_{\ye}$, and $V_{\yf}$, and is the linear map between multiplicity spaces of the type
\begin{equation*}
    \abcdefsymbol :
    C^{\yc}_{\ya\yb} \ot C^{\ye}_{\yc\yd} \to C^{\ye}_{\ya\yf} \ot C^{\yf}_{\yb\yd},
\end{equation*}
corresponding to the block-wise components of the canonical isomorphism from $\bigoplus_{\yc} C^{\yc}_{\ya\yb} \ot C^{\ye}_{\yc\yd}$ to $\bigoplus_{\yf} C^{\ye}_{\ya\yf} \ot C^{\yf}_{\yb\yd}$.
Here we note a necessary condition for $\abcdefsymbol \neq 0$ is the satisfaction of the following size conditions:
\begin{align}
    \label{eq:size_conditions_tet}
    \begin{split}
        \sya + \syb &= \syc, \quad
        \syb + \syd = \syf, \\
        \sya + \syf &= \sye, \quad
        \syc + \syd = \sye.
    \end{split}
\end{align}
Before continuing, we emphasize that the representation theories of $G=U(n)$ and $G=SU(n)$, and thus the properties of their respective $6j$-symbols, are essentially the same up to a shifting of dominant weights, an reinterpretation of size conditions, and overall complex phases \cite{hall2008representations}.
For the sake of concreteness, we work exclusively with the group $G=U(n)$.

Our first result is a necessary and sufficient condition for membership in $\Tetra^{+}(n)$.
\begin{restatable}{thm}{tetiffthm}
    \label{thm:tet_iff}
    Let $(\ea,\eb,\ec,\ed,\ee,\ef) \in (\eigsp{n})^6$ satisfy the trace conditions.
    Then $(\ea,\eb,\ec,\ed,\ee,\ef) \in \Tetra^{+}(n)$
    if and only if for all triples $(\ya,\yb,\yd) \in (\undomweightsp{n})^3$,
    \begin{equation}
        \label{eq:tet_iff}
        \vp_{\ya}(\ea)^{\frac{2}{3}}\vp_{\yb}(\eb)\vp_{\yd}(\ed)^{\frac{2}{3}}
        \leq \sum_{\yc,\ye,\yf} \norm{\abcdefsymbol}_{\infty}
        \left(\vp_{\yf}(\ef)\vp_{\yc}(\ec)\vp_{\ye}(\ee)\right)^{\frac{1}{3}}.
    \end{equation}
    Furthermore, if the above inequality holds for all triples $(\ya,\yb,\yd) \in (\undomweightsp{n})^3$ with $\sya + \syb + \syd = k$, then
    \begin{equation}
        \label{eq:tet_dist_bound}
        \frac{1}{\tee} D\!\tightparensixj{\ea & \eb & \ec \\ \ed & \ee & \ef} \leq 6 \sqrt{3} n \sqrt{\frac{\ln \polyk{}}{k}}.
    \end{equation}
\end{restatable}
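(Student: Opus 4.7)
The plan is to establish the equivalence and the distance bound by leveraging the eigenvalue-estimation paradigm from quantum information theory, viewing both sides of the inequality as expectation values with respect to Schur--Weyl-type distributions on Young diagrams.

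For the necessity direction, I would start from a genuine tetrahedron $(A,B,C,D,E,F)$ of positive Hermitian matrices and construct, for every triple of dominant weights $(\alpha,\beta,\delta)$, a representation-theoretic identity on the triple tensor product $V_\alpha \otimes V_\beta \otimes V_\delta$. The associativity of tensor products gives two distinct decompositions of this space, one through the intermediate representation $V_\gamma$ (using $A+B=C$ and $C+D=E$) and one through $V_\phi$ (using $B+D=F$ and $A+F=E$); the change-of-basis between them is exactly the $6j$-symbol $\abcdefsymbol$. I expect to express the product $\chi_\alpha(A)\chi_\beta(B)\chi_\delta(D)$ as a trace over a suitable operator built from the matrix representations of $A,B,D$ acting on $V_\alpha\otimes V_\beta\otimes V_\delta$, and then bound it against the corresponding $\chi_\gamma(C)\chi_\epsilon(E)\chi_\phi(F)$ via Hölder's inequality. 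The asymmetric exponents $\tfrac{2}{3},1,\tfrac{2}{3}$ on the left and $\tfrac{1}{3},\tfrac{1}{3},\tfrac{1}{3}$ on the right should arise naturally by symmetrizing this estimate over the three ways of pairing the triangle faces that share the central edge, with $\norm{\abcdefsymbol}_\infty$ emerging as the operator-norm cost of switching between the two associators.

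For the sufficiency direction and the quantitative distance bound, my plan follows the Keyl--Werner approach to eigenvalue estimation. Given the assumed inequality for all $(\alpha,\beta,\delta)$ with $|\alpha|+|\beta|+|\delta|=k$, I would use the $6j$-symbol inequality to construct a single probability distribution $P_k$ on sextuples $(\alpha,\beta,\gamma,\delta,\epsilon,\phi)$ of Young diagrams respecting the four size conditions \eqref{eq:size_conditions_tet}, whose six marginals are the empirical Schur--Weyl distributions associated (respectively) to $a,b,c,d,e,f$. Applying the Keyl--Werner concentration bound to each marginal gives $\norm{\lambda/k - x}_1 = O(n\sqrt{\ln(k{+}1)/k})$ for each of the six coordinates; summing these in quadrature and invoking the definition of $D\tightparensixj{a&b&c\\d&e&f}$ will yield the constant $6\sqrt{3}n$ in \eqref{eq:tet_dist_bound}. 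The existence of the joint distribution $P_k$ with consistent marginals will, after averaging and passing to representatives via the standard ``round the diagram to a matrix'' construction (diagonal density matrices with spectrum $\lambda/k$, then invoke existence of tetrahedra for integer weights from the $6j$-symbol being nonzero), produce six matrices approximately satisfying all four tetrahedral constraints.

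The main obstacle, as I see it, is the sufficiency direction and specifically the construction of the joint distribution $P_k$ whose \emph{simultaneous} consistency with all four face-addition constraints is forced by the single inequality \eqref{eq:tet_iff}. It is straightforward to construct three pairwise-consistent couplings from three applications of Horn-type inequalities on individual faces, but coupling them into one sextuple-valued distribution so that the Schur--Weyl rounding returns genuine matrices $A,B,C,D,E,F$ with $A+B=C$, $B+D=F$, $A+F=E=C+D$ all holding approximately requires the full strength of the $6j$-symbol norm appearing in \eqref{eq:tet_iff}. I expect the correct coupling to be built from the block structure of the canonical isomorphism $\bigoplus_\gamma C^\gamma_{\alpha\beta}\otimes C^\epsilon_{\gamma\delta} \cong \bigoplus_\phi C^\phi_{\beta\delta}\otimes C^\epsilon_{\alpha\phi}$, with $\norm{\abcdefsymbol}_\infty$ appearing precisely as the normalization factor that turns the assumed scalar inequality into an operator-level Hölder estimate strong enough to force joint concentration.
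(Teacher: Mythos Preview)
Your necessity sketch is roughly aligned with the paper's argument, though the mechanism for the exponents is more concrete than ``symmetrizing over three ways of pairing.'' The paper introduces the function $\vp^{\ya\yb\yd}_{\yc\ye\yf}(A,B,D)=\Tr\big[\projr(\Phi_\ya(A)\otimes\Phi_\yb(B)\otimes\Phi_\yd(D))\projl\big]$, applies a Cauchy--Schwarz estimate (\cref{prop:cauchy_schwarz_ineq_double_proj}) to split this into $\norm{\abcdefsymbol}_\infty$ times a geometric mean of the two partial sums $\vp^{\ya\yb\yd}_{\_\,\ye\yf}$ and $\vp^{\ya\yb\yd}_{\yc\ye\,\_}$, and then bounds each partial sum in three different ways (\cref{prop:tet_psum_ineq}) whose $2/3$--$1/3$ geometric combination yields the asymmetric exponents.

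The sufficiency direction, however, has a genuine gap. Your plan is to build a joint distribution $P_k$ on sextuples of Young diagrams with prescribed Schur--Weyl marginals, and then to ``round the diagram to a matrix'' and ``invoke existence of tetrahedra for integer weights from the $6j$-symbol being nonzero.'' Neither of these steps works as stated. Nonvanishing of a $6j$-symbol does not, by itself, produce Hermitian matrices with the given eigenvalues satisfying the four linear constraints; that implication is essentially the nontrivial direction of \cref{thm:6j_estimates}, not an input. And even granting a joint $P_k$, there is no ``rounding'' map from a sextuple of diagrams to a sextuple of matrices obeying $A+B=C$, $B+D=F$, $C+D=E=A+F$.

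The paper avoids this obstacle entirely by never leaving the world of matrices. For fixed $(\ya,\yb,\yd)$ it picks $(\yc^*,\ye^*,\yf^*)$ maximizing the summand on the right of \eqref{eq:tet_iff}, and then uses a Haar-averaging argument (\cref{lem:estimate_E_x_y_z_bound}) to locate a specific triple $(A^*,B^*,D^*)\in\orbit_a\times\orbit_b\times\orbit_d$ on which $|\vp^{\ya\yb\yd}_{\yc^*\ye^*\yf^*}|$ is large. These matrices already have eigenvalues $a,b,d$, and setting $C^*=A^*+B^*$, $F^*=B^*+D^*$, $E^*=A^*+B^*+D^*$ gives an exact tetrahedron for free. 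The remaining work is to show $\eig(C^*),\eig(E^*),\eig(F^*)$ are close to $c,e,f$: this is done by multiplying the assumed inequality against the Haar-average bound, applying the eigenvalue-separation estimate (\cref{prop:eig_sep}) to pairs like $\sqrt{\vp_{\yc^*}(c)\vp_{\yc^*}(C^*)}$, converting Bhattacharyya coefficients to $\ell_1$ distances, and finally summing over $(\ya,\yb,\yd)$. The missing idea in your plan is precisely this: the matrices are obtained not from Young diagrams but from maximizing over Haar orbits, and the distance bound comes from eigenvalue separation applied to $(c,\eig C^*)$, $(e,\eig E^*)$, $(f,\eig F^*)$.
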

Note that in \cref{eq:tet_iff} the sum over $\yc, \ye, \yf$ on the right-hand side in \cref{eq:tet_iff} is finite since there are only a finite number of triples $(\yc, \ye, \yf)$ for which the size conditions in \cref{eq:size_conditions_tet} hold and thus for which the $6j$-symbol is non-vanishing.
\Cref{thm:tet_iff} establishes an infinite set of inequalities which characterize the set $\Tetra^+(n)$.
To the best of our knowledge, this the first system of inequalities which provides not only necessary but also sufficient conditions for $(\ea,\eb,\ec,\ed,\ee,\ef) \in \Tetra^+(n)$.
Furthermore, for fixed $k = \sya + \syb + \syd$, the corresponding finite set of inequalities describes an outer approximation to $\Tetra^+(n)$ which becomes exact as $k \to \infty$.

Our second result concerns the behavior of $6j$-symbols for $G=U(n)$ along convergent sequences of weights.
Our interest in the $6j$-symbols for $G=U(n)$ or $G=SU(n)$ ultimately stems from the importance of the $6j$-symbols for $G=SU(2)$, also referred to as Wigner $6j$-symbols/coefficients \cite{wigner1993matrices} or the Racah $W$-coefficients \cite{racah1942theory},
where they play a foundational role in the quantum theory of angular momentum \cite{varshalovich1988quantum,edmonds1996angular,aquilanti2012semiclassical}, the theory of spin networks \cite{penrose1971applications, baez2010physics, baez1996spin}, invariants of knots and manifolds \cite{reshetikhin1987quantized, turaev1992state}, topological quantum field theory \cite{durhuus1993topological}, elementary gates in spin network models for quantum computing \cite{marzuoli2005computing}, quantum change-point discrimination \cite{akimoto2011discrimination}, and models for quantum gravity \cite{baez1999introduction,barrett1998relativistic,barrett2003geometrical,rovelli2004quantum,moussouris1984quantum}.
\begin{restatable}{thm}{sixjestimates}
    \label{thm:6j_estimates}
    Let $(\ea, \eb, \ec, \ed, \ee, \ef) \in \Tetra^+(n)$.
    Then there exists a sequence
    $(\ya_k, \yb_k, \yc_k, \yd_k, \ye_k, \yf_k) \in (\undomweightsp{n})^6$ with $\abs{\ya_k}+\abs{\yb_k}+\abs{\yd_k} = k$ such that
    \begin{equation}
        \label{eq:young_diagram_convergence}
        \frac{1}{k} (\ya_k, \yb_k, \yc_k, \yd_k, \ye_k, \yf_k)
        \stackrel{k \to \infty}{\longrightarrow}
        \frac{1}{\Tr[e]} (\ea, \eb, \ec, \ed, \ee, \ef),
    \end{equation}
    and
    \begin{equation}
        \label{eq:n_independent_inverse_poly}
        \norm{\tightcurlysixj{ \ya_k & \yb_k & \yc_k \\ \yd_k &  \ye_k &  \yf_k}}_{\infty} \geq \frac{1}{\polyk{6\rank[\ee]}}.
    \end{equation}
    Furthermore, for any sequence $(\ya_k, \yb_k, \yc_k, \yd_k, \ye_k, \yf_k) \in (\undomweightsp{n})^6$ converging to $(\ea, \eb, \ec, \ed, \ee, \ef) \not \in \Tetra^{+}(n)$ in the sense of \cref{eq:young_diagram_convergence}, we have for all sufficiently large $k$, the bound
    \begin{equation}
        \norm{\tightcurlysixj{ \ya_k & \yb_k & \yc_k \\ \yd_k &  \ye_k &  \yf_k}}_{\infty} \leq C e^{-kr}
    \end{equation}
    for some $C, r > 0$.
\end{restatable}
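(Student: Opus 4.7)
The plan is to use \cref{thm:tet_iff} as the central bridge, converting the asymptotic questions about $6j$-symbol norms into quantitative statements about the inequality \eqref{eq:tet_iff}. The key technical inputs are (i) the polynomial size of the summation in \eqref{eq:tet_iff}, bounded by $\polyk{O(\rank[\ee])}$ via the size conditions \eqref{eq:size_conditions_tet}, and (ii) Keyl-Werner-type concentration estimates for the Schur-Weyl distribution, which sharply control $\vp_\yx(\ex)$ in terms of how closely $\yx/\syx$ approximates the spectrum of $\ex/\Tr[\ex]$.

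For the forward direction, assume $(\ea,\eb,\ec,\ed,\ee,\ef) \in \Tetra^+(n)$. First I would choose integer sizes $k_1+k_2+k_3 = k$ with $k_i/k$ approximating $\Tr[a]/\Tr[e], \Tr[b]/\Tr[e], \Tr[d]/\Tr[e]$, and then select Young diagrams $\ya_k, \yb_k, \yd_k$ by rounding $k_i \ea/\Tr[a]$, $k_i \eb/\Tr[b]$, $k_i \ed/\Tr[d]$ to integer partitions. Applying \cref{thm:tet_iff} at this triple, the left-hand side of \eqref{eq:tet_iff} is bounded below by an inverse-polynomial multiple of the natural normalization factors, since each of $\ya_k/k_1$, $\yb_k/k_2$, $\yd_k/k_3$ lies near the mean of its Schur-Weyl distribution. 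The right-hand side is a sum of at most $\polyk{O(\rank[\ee])}$ nonvanishing terms, so pigeonhole delivers a triple $(\yc_k, \ye_k, \yf_k)$ whose summand is at least an inverse-polynomial share of the LHS. The exponential upper bound on $\vp_\yx(\ex)$ for $\yx/\syx$ away from the spectrum of $\ex/\Tr[\ex]$ then forces this winning triple to concentrate near $(\ec, \ee, \ef)/\Tr[e]$; after passing to a subsequence, the convergence \eqref{eq:young_diagram_convergence} holds, and dividing out the remaining character factors produces the lower bound $\norm{\abcdefsymbolk}_\infty \geq 1/\polyk{6\rank[\ee]}$.

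For the converse, I would argue by contradiction: if $\norm{\abcdefsymbolk}_\infty$ is not exponentially small along some subsequence, then $(\ea, \eb, \ec, \ed, \ee, \ef)$ must lie in the closed set $\Tetra^+(n)$, contradicting the hypothesis. Concretely, a non-exponentially-small $6j$-symbol produces approximate intertwiners between the multiplicity spaces appearing in the two associations of $V_{\ya_k} \otimes V_{\yb_k} \otimes V_{\yd_k}$; via Keyl-Werner quantum eigenvalue estimation applied to the Young diagrams $\ya_k, \yb_k, \yc_k, \yd_k, \ye_k, \yf_k$, these intertwiners can be converted into Hermitian matrices $A_k, B_k, C_k, D_k, E_k, F_k \in \hermp{n}$ with eigenvalues close to the respective diagrams after the $1/k$ rescaling. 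The tetrahedral sum conditions $A_k + B_k = C_k$, $B_k + D_k = F_k$, $A_k + F_k = E_k$, $C_k + D_k = E_k$ would then hold up to an error controlled by $k^{-1}\log\norm{\abcdefsymbolk}_\infty^{-1}$, by an argument paralleling the sufficiency direction of \cref{thm:tet_iff}. Passing to the limit exhibits $(\ea,\eb,\ec,\ed,\ee,\ef)/\Tr[e]$ as a limit of tetrahedra of Hermitian matrices, placing it in $\Tetra^+(n)$ and giving the contradiction.

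The main obstacle lies in the converse: extracting an approximate tetrahedron from the non-smallness of a single $6j$-symbol requires a multiplicity-space analysis beyond the black-box application of \cref{thm:tet_iff}, likely via an explicit vector-level construction mirroring the sufficiency proof of that theorem together with a careful accounting of the error accumulated in converting multiplicity-space vectors into Hermitian matrices. A secondary technical point in the forward direction is extracting a fully convergent sequence from the pigeonhole step, which requires the exponential suppression tails in the Keyl-Werner bounds to be sharp enough to rule out non-convergent pigeonhole winners.
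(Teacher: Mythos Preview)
Your forward direction is a plausible variant but differs meaningfully from the paper's argument and would not produce the clean exponent $6\rank[\ee]$. The paper does \emph{not} route through \cref{thm:tet_iff}; instead it fixes actual matrices $X,Y,Z$ realizing the tetrahedron and works directly with the normalization identity
\[
\sum_{\substack{\ya,\yb,\yd,\yc,\ye,\yf \\ |\ye|=k}} \vp^{\ya\yb\yd}_{\yc\ye\yf}(X,Y,Z) = \frac{\tee^k}{k!},
\]
pigeonholing once over all six diagrams simultaneously to select the sextuple maximizing $|\vp^{\ya\yb\yd}_{\yc\ye\yf}(X,Y,Z)|$. The bound $|\vp^{\ya\yb\yd}_{\yc\ye\yf}| \leq \norm{\abcdefsymbol}_\infty \cdot \tee^k/k!$ (from \cref{prop:varphi_tet_main_ineq,prop:tet_psum_ineq}) then gives the stated lower bound in one step, and convergence follows from the relative-entropy estimate \cref{lem:asym_sixj_helper}. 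Your two-stage approach (round to get $\ya_k,\yb_k,\yd_k$, then pigeonhole over $\yc,\ye,\yf$ in \eqref{eq:tet_iff}) requires a separate inverse-polynomial lower bound on $\vp_{\ya_k}(\ea)^{2/3}\vp_{\yb_k}(\eb)\vp_{\yd_k}(\ed)^{2/3}$ at the rounded point, and then dividing out the character factors $(\vp_{\yc}(\ec)\vp_{\ye}(\ee)\vp_{\yf}(\ef))^{1/3}$ on the right; each of these steps introduces additional polynomial factors beyond $(k+1)^{6\rank[\ee]}$.

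The converse has a genuine gap. Your proposed mechanism---``intertwiners can be converted into Hermitian matrices via Keyl--Werner''---is not a recognizable technique, and the admitted obstacle is real. The actual bridge from $\norm{\abcdefsymbolk}$ back to tetrahedra is the Haar-average identity (\cref{prop:random_matrices}/\cref{lem:estimate_E_x_y_z_bound}):
\[
\norm{\abcdefsymbolk}_2^2\, \vp_{\ya_k}(\ea)\vp_{\yb_k}(\eb)\vp_{\yd_k}(\ed) \;\leq\; \polyk{3\binom{n}{2}} \max_{X,Y,Z} \bigl|\vp^{\ya_k\yb_k\yd_k}_{\yc_k\ye_k\yf_k}(X,Y,Z)\bigr|,
\]
where the maximum is over $X\in\orbit_\ea,\,Y\in\orbit_\eb,\,Z\in\orbit_\ed$. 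One then chooses $(\ea,\eb,\ed)$ so that the left-hand character product is at least inverse-polynomial (which is always possible by normalization and pigeonhole, not by rounding), and bounds the right-hand side by $\polyk{}\exp(-kR)$ via \cref{lem:asym_sixj_helper}, where $R$ is a relative-entropy distance from $(\ya_k,\ldots,\yf_k)/k$ to the eigenvalue sextuple of the maximizing tetrahedron $(X,Y,X{+}Y,Z,X{+}Y{+}Z,Y{+}Z)$. Since that sextuple lies in $\Tetra^+(n)$ and $(\ya_k,\ldots,\yf_k)/k$ converges to a point outside $\Tetra^+(n)$, $R$ is eventually bounded below by some $r>0$, and the exponential decay follows. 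No construction of approximate intertwiners or conversion to matrices is needed; the Haar integral formula already encodes the averaging over tetrahedra.
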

\Cref{thm:6j_estimates} is an instance of a correspondence between geometry and representation theory in the sense of geometric quantization \cite{roberts2002asymptotics,guillemin1982geometric,kirillov2004lectures}.
Indeed, the asymptotic behavior of representation theoretic objects ($6j$-symbols) is controlled by solutions of a geometric problem (the tetrahedral Horn problem).
The statement of \cref{thm:6j_estimates} should be compared with a similar statement available for $G=SU(2)$, namely the proof of the \citeauthor{ponzano1968semiclassical} asymptotic formula \cite{ponzano1968semiclassical} by \citeauthor{roberts1999classical} \cite{roberts1999classical}.
There, the exponential decay statement is similar to ours, but the polynomial bound statement is more precise for the sequence of scaled weights
\begin{equation}
    (\ya_k, \yb_k, \yc_k, \yd_k, \ye_k, \yf_k) = k (\ya, \yb, \yc, \yd, \ye, \yf).
\end{equation}
While \cref{thm:6j_estimates} is a weaker statement than the full \citeauthor{ponzano1968semiclassical} formula for $n=2$ \cite{ponzano1968semiclassical,roberts1999classical}, it nevertheless applies for all dimensions $n$.
The significant challenge in generalizing results for $n=2$ to $n > 2$ for $U(n)$ $6j$ symbols is the presence of non-trivial multiplicity spaces where $\dim C^{\yc}_{\ya \yb} > 1$.
Because of these difficulties, over the years, considerable attention has been placed on the study of specific $U(n)$ (or $U_q(\mathfrak{sl}_n)$) $6j$ symbols which happen to be multiplicity-free in the sense that the dimensions of each of the multiplicity spaces is at most one \cite{nawata2013multiplicity, alekseev2020multiplicity, alcock2023wigner, coquereaux2023integer}.
Our approach avoids the difficulties presented by these multiplicities by focusing only on the \textit{norms} of the $6j$-symbol for $U(n)$.

\subsubsection*{Methods}
Central to our proofs is Schur-Weyl duality which is used to describe the decomposition of tensor powers $X^{\ot k}$ of a matrix $X$ into irreducible representations, along with the non-commutative binomial formula for the tensor powers of matrix sums.
These features distinguish the case of $G=U(n)$ discussed in this paper from alternate versions of the tetrahedral Horn problem that one may define for other compact groups.
Related is a probabilistic technique developed in the context of quantum information theory, called \textit{eigenvalue estimation}, used for estimating the eigenvalues of non-negative matrices which model the state of quantum systems \cite{keyl2001estimating,christandl2006spectra,odonnell2016efficient,hayashi2002quantum,childs2007weak,hayashi2017group}.
Eigenvalue estimation describes how the function $\yx \mapsto \vp_\yx(X)$ is concentrated around those $\yx$ which are proportional to the eigenvalues $\ex$ of the matrix $X$ and how this concentration sharpens as $k$ grows.
The method of eigenvalue estimation explains the relation between \cref{thm:tet_iff} and \cref{thm:6j_estimates};
when $(\ya,\yb,\yc,\yd,\ye,\yf)$ are proportional to the candidate eigenvalues $(\ea,\eb,\ec,\ed,\ee,\ef)$ in the sense of \cref{eq:young_diagram_convergence}, each of the functions $\vp_\ya(a)$, $\vp_\yb(b)$, $\vp_\yc(c)$,  $\vp_\yd(d)$, $\vp_\ye(e)$, and $\vp_\yf(f)$, are comparatively large and when $(\ea,\eb,\ec,\ed,\ee,\ef) \not \in \Tetra^{+}(n)$, the $6j$-symbol is exponentially decaying and thus \cref{eq:tet_iff} fails to hold for sufficiently large $k$.

Applying the eigenvalue estimation technique to the original Horn problem was first done by \citeauthor{christandl2008quantum} \cite{christandl2008quantum}.
In this paper, we apply the eigenvalue estimation technique to the tetrahedral Horn problem.
As the method of eigenvalue estimation is quite versatile, we anticipate further works will find it applicable to the problem of determining the set of possible eigenvalues for Hermitian matrices subject to more general systems of constraints.

\subsubsection*{Related works}
Recent developments on Horn's problem include the development of the geometric method by \citeauthor{sherman2015geometric} \cite{sherman2015geometric} and \citeauthor{berline2018horn} \cite{berline2018horn} originally pioneered by the geometric proofs due to \citeauthor{belkale2002geometric} \cite{belkale2002geometric,belkale2005geometric}, as well the characterization and computation of the induced distribution over the eigenvalues $\ec$ of the sum $C=A+B$ if $A$ and $B$ are sampled uniformly and independently from the respective sets of Hermitian matrices with eigenvalues $\ea$ and $\eb$ by \citeauthor{coquereaux2019revisiting} in series of papers \cite{coquereaux2018orbital,coquereaux2018orbital,coquereaux2019revisiting,zuber2018horn}.
See also the article by \citeauthor{daftuar2005quantum} for an account of applications to topics in quantum information theory \cite{daftuar2005quantum}.

In parallel to the original Horn problem, variations and extensions of the original Horn problem have been explored by numerous authors.
For example, \citeauthor{agnihotri1997eigenvalues}, \citeauthor{belkale2008quantum}, and \cite{belkale2016multiplicative} have studied the eigenvalues of unitary matrices and their product \cite{agnihotri1997eigenvalues,belkale2008quantum,belkale2016multiplicative}, or the admissible singular values of matrices and their product \cite{speyer2005horn,thompson1971eigenvaluesviii,klyachko2000random,alekseev2001linearization}.

More recently, \citeauthor{alekseev2025multiple} have investigated a few variants of the tetrahedral Horn problem and are referred to as \textit{multiple} Horn problems in \cite{alekseev2025multiple}.
Specifically, \citeauthor{alekseev2025multiple} focus on two variants, the \textit{multiplicative multiple Horn problem} concerning the singular values of products of invertible matrices, and the \textit{tropical multiple Horn problem} concerning singular values of concatenations of weighted planar networks \cite{alekseev2025multiple}.
The tetrahedral Horn problem we consider here is termed the \textit{additive} multiple Horn problem in the language of \cite{alekseev2025multiple}.

In \cite{christandl2018recoupling}, \citeauthor{christandl2018recoupling} investigated problem of determining the eigenvalues for a non-negative Hermitian matrix $\rho_{ABC}$ on a triple tensor product space along with its partial partial traces $\rho_{AB}, \rho_{BC}, \rho_{A}, \rho_{B}$, and $\rho_{C}$.
Such eigenvalue problems arise naturally in the context of quantum information theory and are typically referred to as quantum marginal problems \cite{klyachko2004quantum, daftuar2005quantum, christandl2006spectra, christandl2007nonzero, christandl2018recoupling, yu2021complete, fraser2022sufficient}.
Using \citeauthor{nishiyama2000restriction}'s restriction theorem to the subgroup $S_k \subset U(k)$ of permutation matrices \cite{nishiyama2000restriction}, they showed how the solution to their eigenvalue problem is associated to the $6j$ symbols for the symmetric group $S_k$ \cite{christandl2018recoupling}.
Moreover, they proved the tetrahedral Horn problem may be viewed as a special case of the quantum marginals problem and, in a complementary manner, explained how norms of the $6j$ symbols for unitary group $U(n)$ are a special case of norms of the $6j$ symbols for the symmetric group $S_k$ \cite{christandl2018recoupling}, generalizing how the Littlewood-Richardson coefficients can be cast as a special case of the Kronecker coefficients \cite{littlewood1958products,murnaghan1955analysis,ikenmeyer2023all}.
In this paper, we address the tetrahedral Horn problem directly, and consequently, we obtain \cref{thm:tet_iff} which provides stronger conditions eigenvalues of tetrahedra of Hermitian matrices which go beyond the conditions obtained in \cite{christandl2018recoupling}.
Also, while the statement of \cref{thm:6j_estimates} is implicitly contained in \cite{christandl2018recoupling}, in this paper, we provide a direct proof.
We note that this direct approach was initiated, for the $n=2$ case, by \citeauthor{backens2010sixjsymbols} \cite{backens2010sixjsymbols}.

\subsubsection*{Organization of the paper}
The remainder of the paper is dedicated to the proofs of \cref{thm:tet_iff} in \cref{sec:proof_tet_iff} and \cref{thm:6j_estimates} in \cref{sec:proof_6j_estimates}.
We begin with \cref{sec:background} which provides all of the background material on representation theory and probability theory needed for the proofs which follow.
\Cref{sec:symmetries} presents various symmetries of the tetrahedral Horn problem,
\cref{sec:new_ineq_horn_orig} describes how our methods can be applied to the original Horn problem, and
\cref{sec:large_n} demonstrates how to recover an entropic inequality from the $n\to\infty$ limit.

\subsubsection*{Acknowledgments}
We are grateful to Jack Davis, Pavel Etingof, Marco Fanizza, Harold Nieuwboer, Suvrit Sra, and Michèle Vergne for helpful discussions.

Research of AA was supported in part by the grants 208235, 220040, and 236683, and by the National Center for Competence in Research (NCCR) SwissMAP of the Swiss National Science Foundation, and by the award of the Simons Foundation to the Hamilton Mathematics Institute of the Trinity College Dublin under the program ``Targeted Grants to Institutes''.
MC and TCF acknowledge financial support from the European Research Council (ERC Grant Agreement No.~818761), VILLUM FONDEN via the QMATH Centre of Excellence (Grant No.~10059) and the Novo Nordisk Foundation (grant NNF20OC0059939 `Quantum for Life'). They also thank the NCCR SwissMAP and the Section of Mathematics at the University of Geneva for their hospitality.

\section{Background material}
\label{sec:background}

In \cref{sec:background_rep_theory}, we review material on the representation theory of compact connected Lie groups,
specifically the group $U(n)$, and in \cref{sec:background_probability} we review some measure theory associated to Haar integrals of such groups.
Along the way, we introduce a series of the polynomial functions $\vp_{\yx}(X)$, $\vp^{\ya\yb}_{\yc}(X,Y)$ and $\vp^{\ya\yb\yd}_{\yc\ye\yf}(X,Y,Z)$ of one, two and three $n\times n$ matrices respectively,
and provide a formula for the representation of a matrix sum $X+Y$ which we refer to as the binomial theorem.

\subsection{Representation theory}
\label{sec:background_rep_theory}

Even though we will primarily deal with $G=U(n)$, we start with a more general story.

\subsubsection{\texorpdfstring{$G$}{G} compact}
Let $G$ be a compact connected Lie group, $T \subset G$  a maximal torus, $\mathfrak{t} = \Lie(T)$ its Lie algebra, $\Lambda = \ker(\exp) \subset \mathfrak{t}$ the integral lattice, $\Lambda^* \subset \mathfrak{t}^*$ the weight lattice, and $\Lambda^*_+ \subset \Lambda^*$ the set of dominant weights of $G$.
For $\yx \in \Lambda^*$, we denote by $\pi_\yx: G \to \End(V_\yx)$ the corresponding irreducible representation of $G$ and the corresponding character by $\chi_\yx(g) \coloneqq \Tr_{V_\yx}[\pi_\yx(g)]$.
We will always assume that the space $V_\yx$ is equipped with a Hermitian structure such that $\pi_\yx$ becomes a unitary representation.
All representations $\pi_\yx$ extend to the corresponding complex Lie group $G^\mathbb{C}$.

The tensor product of two irreducible representations admits a unique decomposition of the form
\begin{equation}
    \label{eq:tensor_product}
    V_\ya \ot V_\yb \cong \bigoplus_\yc C_{\ya \yb}^\yc \ot V_\yc,
\end{equation}
where $C_{\ya \yb}^\yc$ are multiplicity spaces with dimensions $c_{\ya \yb}^\yc = \dim  C_{\ya \yb}^\yc$.
Hermitian structures on $V_\yx$'s induces  Hermitian structures on the multiplicity spaces $C_{\ya \yb}^\yc$.
In particular, the decomposition in \cref{eq:tensor_product} is an orthogonal decomposition.
We let $\Pi_{\ya \yb}^\yc$ denote the corresponding orthogonal projection operator.
Using this notation, we have for all $g \in G$ the identity
\begin{equation}
    \label{eq:operator product}
    \pi_\ya(g) \ot \pi_\yb(g) \cong \bigoplus_{\yc} \id_{C_{\ya \yb}^\yc} \ot \pi_\yc(g),
\end{equation}
Triple tensor products can be decomposed in two alternative ways:
\begin{align}
    \label{eq:triple_tensor_product}
    \begin{split}
        (V_\ya \ot V_\yb) \ot V_\yd
        &\cong \bigoplus_{\yc, \ye} (C_{\ya \yb}^\yc \ot C_{\yc \yd}^\ye) \ot V_\ye,\\
        V_\ya \ot (V_\yb \ot V_\yd)
        &\cong \bigoplus_{\yf, \ye} (C_{\ya \yf}^\ye \ot C_{\yb \yd}^\yf)\ot V_\ye.
    \end{split}
\end{align}
We let $\projl$ and $\projr$ denote the corresponding orthogonal projection operators.
Since the composition of these two projectors is a $G$-equivariant linear map between $V_{\ye}$ and itself, Schur's Lemma implies $\projl \projr$ acts as the identity on $V_{\ye}$ and as the map
\begin{equation}
   \abcdefsymbol : C_{\ya \yb}^\yc \ot C_{\yc \yd}^\ye \to C_{\ya \yf}^\ye \ot C_{\yb \yd}^\yf,
\end{equation}
called the $6j$-symbol for $G$, on the multiplicity spaces.
Concretely, our normalization convention for $\abcdefsymbol$ is fixed by the formula
\begin{equation}
    \label{eq:prod_proj_6j}
    \projl \projr \simeq
    \abcdefsymbol \ot \id_{V_{\ye}}
\end{equation}
where $\cong$ denotes equivalence up to isometries.

Recall that for Hermitian vector spaces $V_{1}, V_{2}$ and an operator $O: V_{1} \to V_{2}$ one can introduce the norms
\begin{align}
    \begin{split}
        \norm{O}_{\infty} &= \max_{v \in V_1, \norm{v}_{V_1} = 1} \norm{O(v)}_{V_2}, \\
        \norm{O}_{2} &= \Tr_{V_1}[O^{\dagger} O]^{\frac{1}{2}} = \Tr_{V_2}[O O^{\dagger}]^{\frac{1}{2}}.
    \end{split}
\end{align}

\subsubsection{\texorpdfstring{$G=U(n)$}{G=U(n)} and  functions \texorpdfstring{$\vp_\yx$}{philambda}}

For $G=U(n)$, dominant weights are parameterized by non-increasing arrays of integers, $\yx=(\yx_1 \geq \yx_2 \geq \cdots \geq \yx_n)$, where $\yx_i \in \mathbb Z$ and $\yx_i \geq \yx_{i+1}$ for all $i$.
In particular, the first fundamental weight $\varpi_1=(1, 0, \ldots, 0)$ corresponds to the defining representation of $U(n)$ on $V_{\varpi_1} = \mathbb{C}^n$.

We say that a dominant weight $\yx$ is positive if each of its components is positive (i.e., $\yx_n \geq 0$), and we denote the set of positive dominant weights by
\begin{equation}
    \undomweightsp{n} \coloneqq \{ \yx=(\yx_1, \yx_2, \ldots, \yx_n) \mid \yx_i \in \mathbb Z, \yx_i \geq \yx_{i+1}, \yx_n \geq 0. \}
\end{equation}
For positive dominant weights, we let
\begin{equation}
    \syx \coloneqq \sum_{i=1}^n \yx_i,
    \qquad
    \text{and}
    \qquad
    \ell(\yx) \coloneqq \card{\{i \mid \yx_i \neq 0\}},
\end{equation}
where $\syx$ is called the size of $\yx$ and $\ell(\yx)$ is called the depth of $\yx$.
Henceforth we use the notation $\yx \vdash_{n} k$ to denote the set of positive dominant weights with size $k$ and length at most $n$, and $\yx \vdash k$ to denote the set of positive dominant weights with size $k$ and arbitrary length.
Positive dominant weights, $\yx \in \undomweightsp{n}$, are equivalent to integer partitions of $\syx$ into $\ell(\yx) \leq n$ parts and depicted as Young diagrams with $\syx$ boxes and $\ell(\yx)$ rows.
Representations $\pi_\yx(g)$ corresponding to positive dominant weights are polynomial in matrix elements of $g$, and thus they extend to matrix algebras $\Mat_n(\mathbb{C})$ such that
\begin{equation}
    \pi_\yx(X) \pi_\yx(Y) = \pi_\yx(XY)
\end{equation}
for all $X,Y \in \Mat_n(\mathbb{C})$.
If $X$ is a Hermitian matrix with eigenvalues $\ex = (\ex_1 \geq \cdots \geq \ex_n)$ then the character of $X$ for the representation $\pi_{\yx}$, denoted by $\chi_{\yx}(X)$, is equal to the Schur-polynomial of its eigenvalues $\ex$, denoted by $s_{\yx}(\ex)$.
Moreover, the dimension for the irreducible representation $V_{\yx}$ is given by Weyl's dimension formula for $U(n)$ \cite{hall2008representations,itzykson1966unitary},
\begin{equation}
    \label{eq:weyl_dim}
    \dim(V_{\yx}) = \frac{\prod_{1 \leq i<j \leq n}(\yx_i - i - \yx_j + j)}{\prod_{1 \leq i<j \leq n}(j-i)}.
\end{equation}
Tensor products of representations with positive dominant weights decompose into direct sums of representations of the same type.
Note that in the decomposition of the tensor product representation $V_{\ya}\ot V_{\yb}$ appearing in \cref{eq:tensor_product}, the only positive dominant weights $\syc$ which appear are those which satisfy the size condition
\begin{equation}
    \label{eq:size_condition_tri}
    \syc = \sya + \syb.
\end{equation}

Recall that Young diagrams, $\yx$, with fixed size $\syx=k$ and at most $\ell(\yx) \leq k$ rows also label isomorphism classes of irreducible representations, $W_\yx$, of the symmetric group $S_k$.
The dimensions of these representations are given by the formula
\begin{equation}
    \label{eq:dim_W_hook_product}
    \dim(W_\yx) = \frac{\syx!}{H_{\yx}},
    \qquad
    \text{with}
    \qquad
    H_{\yx} = \prod_{b \in \yx} h_\yx(b),
\end{equation}
where $h_\yx(b)$ is the hook length of box $b$ and is defined as the number of boxes to the right and below $b$ plus one.
We refer to $H_{\yx}$ as the hook product of $\yx$.
For the future use, we introduce rescaled representations
\begin{equation}
    \Phi_\yx(X) = \frac{1}{H_{\yx}} \pi_\yx(X),
\end{equation}
and rescaled characters
\begin{equation}
    \vp_\yx(X) = \Tr_{V_\yx}  \Phi_\yx(X) = \frac{1}{H_{\yx}}  \chi_\yx(X).
\end{equation}
This rescaling of characters produces the normalization conditions
\begin{equation}
    \label{eq:varphi_normalization}
    \sum_{\yx \vdash k} \vp_\yx(X) = \frac{(\Tr X)^k}{k!},
    \quad \text{and} \quad
    \sum_\yx \vp_\yx(X) = \exp(\Tr X).
\end{equation}
Indeed, by the Schur-Weyl duality, there is an isomorphism of $S_k \times U(n)$-modules
\begin{equation}
    \label{eq:schur-weyl}
    (\mathbb{C}^n)^{\ot k} \cong \bigoplus_{\yx \vdash_{n} k} W_\yx \ot V_\yx,
\end{equation}
where the symmetric group $S_k$ acts by permutations of tensor factors in $(\mathbb{C}^n)^{\ot k}$.
For each Young diagram $\yx$ with $k$ boxes and at most $n$ rows, we let $\Pi^{\yx}$
denote the unique $S_k \times U(n)$-equivariant orthogonal projection $(\mathbb{C}^n)^{\ot k}$ into $W_{\yx} \ot V_{\yx}$.
Since all $U(n)$ representations entering the decomposition in \cref{eq:schur-weyl} are positive, the Schur-Weyl duality extends to an isomorphism of modules for monoids $S_k \times \Mat_n(\mathbb{C})$.
Applying this isomorphism to an element $X \in \Mat_n(\mathbb{C})$, we obtain
\begin{equation}
    \label{eq:X^k}
    X^{\ot k} \cong \bigoplus_{\yx \vdash_{n} k} \id_{W_\yx} \ot \pi_\yx(X).
\end{equation}
Taking a trace of this equation over $(\mathbb{C}^n)^{\ot k}$ yields the normalization condition in \cref{eq:varphi_normalization}.

Recall the space of $n\times n$ Hermitian matrices is denoted by $\herm{n}$ and the subset of non-negative $n\times n$ Hermitian matrices by $\hermp{n}$ and given a Hermitian matrix $X \in \herm{n}$, $\eig X$ denotes the ordered tuple of eigenvalues of $X$.
\begin{prop}
    For $X \in \hermp{n}, \yx \in \undomweightsp{n}$, the operator $\Phi_\yx(X)$ is Hermitian and non-negative, and $\vp_\yx(X)$ is a non-negative real value.
\end{prop}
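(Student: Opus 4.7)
The plan is to lift the claim to the $k$-fold tensor power $X^{\ot k}$ via Schur-Weyl duality, where $k = \syx$, and transport the manifest positivity of $X^{\ot k}$ back down to $\pi_\yx(X)$. Note that since $\yx \in \undomweightsp{n}$, we have $\yx \vdash_n k$, so $\yx$ appears in the Schur-Weyl decomposition of $(\mathbb{C}^n)^{\ot k}$.

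First I would observe that $X^{\ot k}$ is Hermitian and non-negative as an operator on $(\mathbb{C}^n)^{\ot k}$: since $X \in \hermp{n}$, we may factor $X = Y^\dagger Y$ (e.g., with $Y = X^{1/2}$), and then $X^{\ot k} = (Y^{\ot k})^\dagger (Y^{\ot k})$ is manifestly non-negative Hermitian. Next I would invoke the Schur-Weyl isomorphism \cref{eq:schur-weyl} as an orthogonal decomposition of Hermitian $S_k \times U(n)$-modules. This is automatic from the fact that $S_k \times U(n)$ acts by unitaries on $(\mathbb{C}^n)^{\ot k}$ with the standard Hermitian structure, so distinct isotypic components are mutually orthogonal and each can be equipped with induced inner products making the isomorphism an isometry.

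Because $X^{\ot k}$ commutes with the permutation action of $S_k$, it commutes with the isotypic projector $\Pi^\yx$, and under the isomorphism of \cref{eq:X^k} its restriction to the $\yx$-block equals $\id_{W_\yx} \ot \pi_\yx(X)$. Hermiticity and non-negativity pass from $X^{\ot k}$ to this compression. Since $\id_{W_\yx}$ is positive and $W_\yx$ is non-zero, it follows that $\pi_\yx(X)$ must itself be Hermitian and non-negative on $V_\yx$. Dividing by the positive integer $H_\yx$ preserves these properties, so $\Phi_\yx(X) = H_\yx^{-1}\pi_\yx(X)$ is Hermitian and non-negative, and therefore $\vp_\yx(X) = \Tr_{V_\yx} \Phi_\yx(X)$ is a non-negative real number as the trace of a non-negative Hermitian operator.

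The only step requiring care is the compatibility of Schur-Weyl duality with the Hermitian structures, namely that the isomorphism \cref{eq:schur-weyl} can be chosen to be an isometry with respect to the natural Hermitian inner products on both sides. No serious obstacle is anticipated here, since the isotypic decomposition of a unitary representation is automatically orthogonal and the induced Hermitian form on each $W_\yx \ot V_\yx$ is determined up to an overall positive scaling factor by invariance, which does not affect Hermiticity or non-negativity of $\pi_\yx(X)$.
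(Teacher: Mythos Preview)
Your argument is correct and follows essentially the same route as the paper: both proofs identify $\id_{W_\yx}\ot\pi_\yx(X)$ as the orthogonal restriction of the non-negative Hermitian operator $X^{\ot k}$ to the $\yx$-isotypic block of the Schur--Weyl decomposition, deduce that $\pi_\yx(X)$ itself is non-negative Hermitian, and then pass to $\Phi_\yx(X)$ and its trace. Your write-up is somewhat more explicit about why $X^{\ot k}\ge 0$ and about the isometry compatibility of the Schur--Weyl decomposition, but the underlying idea is identical.
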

\begin{proof}
    By \cref{eq:X^k}, the operators $\id_{W_\yx} \ot \pi_\yx(X)$ are orthogonal restrictions of the non-negative Hermitian operator $X^{\ot k}$.
    Hence, they are also non-negative Hermitian operators, and thus so are $\pi_\yx(X)$ and its rescaling $\Phi_\yx(X)$.
    Since $\vp_\yx(X)$ is the trace of $\Phi_\yx(X)$, it takes values in $[0, \infty)$.
\end{proof}

\subsubsection{The function \texorpdfstring{$\vp^{\ya \yb}_\yc$}{phiabc} and the binomial theorem}

It is  convenient to introduce the following functions of two matrices,
\begin{equation}
    \vp^{\ya \yb}_\yc(X, Y) =
    \Tr_{V_\ya \ot V_\yb}  (\Phi_\ya(X) \ot \Phi_\yb(Y)) \Pi_{\ya\yb}^\yc.
\end{equation}

\begin{prop}
    For $\ya, \yb \in \undomweightsp{n}$ and $X,Y \in \Mat_n(\mathbb{C})$, we have
    \begin{equation}
        \label{eq:varphi_product_sum}
        \vp_\ya(X) \vp_\yb(Y) = \sum_\yc \vp^{\ya \yb}_\yc(X, Y).
    \end{equation}
    For $X,Y \in \hermp{n}$, the function $\vp^{\ya \yb}_\yc(X, Y)$ takes non-negative real values.
\end{prop}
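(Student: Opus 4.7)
The proposition contains two assertions, both of which should follow quickly from facts already established in the excerpt.

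For the identity in equation (\ref{eq:varphi_product_sum}), the plan is to rewrite the left-hand side as a trace over the tensor product space. Specifically, using $\vp_\yx(X) = \Tr_{V_\yx} \Phi_\yx(X)$ and the factorization of the trace over a tensor product, one has
\begin{equation}
    \vp_\ya(X)\,\vp_\yb(Y) \;=\; \Tr_{V_\ya \ot V_\yb}\!\left[\Phi_\ya(X) \ot \Phi_\yb(Y)\right].
\end{equation}
Since the orthogonal projections $\Pi_{\ya\yb}^\yc$ associated to the decomposition $V_\ya \ot V_\yb \cong \bigoplus_\yc C^\yc_{\ya\yb} \ot V_\yc$ sum to the identity on $V_\ya \ot V_\yb$, I would insert $\id = \sum_\yc \Pi_{\ya\yb}^\yc$ inside the trace and use linearity of the trace to obtain $\sum_\yc \vp^{\ya\yb}_\yc(X,Y)$, matching the definition of $\vp^{\ya\yb}_\yc$.

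For the non-negativity claim, the plan is to combine cyclicity of the trace with the fact that conjugation by an orthogonal projector preserves non-negativity. Concretely, since $\Pi_{\ya\yb}^\yc$ is an orthogonal projection it satisfies $\Pi_{\ya\yb}^\yc = (\Pi_{\ya\yb}^\yc)^2$ and $(\Pi_{\ya\yb}^\yc)^\dagger = \Pi_{\ya\yb}^\yc$, so by cyclicity
\begin{equation}
    \vp^{\ya \yb}_\yc(X,Y) \;=\; \Tr_{V_\ya \ot V_\yb}\!\left[\Pi_{\ya\yb}^\yc\bigl(\Phi_\ya(X) \ot \Phi_\yb(Y)\bigr)\Pi_{\ya\yb}^\yc\right].
\end{equation}
For $X,Y \in \hermp{n}$, the preceding proposition guarantees that $\Phi_\ya(X)$ and $\Phi_\yb(Y)$ are non-negative Hermitian operators, hence so is their tensor product $\Phi_\ya(X)\ot \Phi_\yb(Y)$. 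Conjugating a non-negative Hermitian operator by $\Pi_{\ya\yb}^\yc$ keeps it non-negative Hermitian, and the trace of such an operator lies in $[0,\infty)$, concluding the argument.

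I do not anticipate a serious obstacle: both parts are essentially bookkeeping exercises that invoke the orthogonality of the $\Pi_{\ya\yb}^\yc$ together with the prior proposition on $\Phi_\yx(X)$. The only subtle point to watch for is that $\Phi_\ya(X) \ot \Phi_\yb(Y)$ need not commute with $\Pi_{\ya\yb}^\yc$ when $X \neq Y$, which is precisely why the cyclicity-plus-sandwich maneuver (rather than direct diagonalization in the decomposition (\ref{eq:operator product})) is required for the positivity step.
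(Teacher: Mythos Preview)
Your proposal is correct and matches the paper's own argument essentially line for line: insert $\sum_\yc \Pi_{\ya\yb}^\yc = \id_{V_\ya \ot V_\yb}$ for the identity, and for positivity view $\vp^{\ya\yb}_\yc(X,Y)$ as the trace of the orthogonal restriction of the non-negative operator $\Phi_\ya(X)\ot\Phi_\yb(Y)$ to the image of $\Pi_{\ya\yb}^\yc$. Your explicit use of cyclicity to write the sandwich $\Pi(\cdot)\Pi$ is just a more detailed rendering of the paper's phrase ``orthogonal restriction''.
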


\begin{proof}
    To prove the summation identity, note that
    \begin{equation}
        \sum_{\yc} \Pi_{\ya\yb}^\yc = \id_{V_{\ya}} \ot \id_{V_{\yb}},
    \end{equation}
    which then implies
    \begin{align}
        \begin{split}
            \sum_\yc \vp^{\ya \yb}_\yc(X, Y)
            &= \sum_\yc \Tr_{V_\ya \ot V_\yb}  (\Phi_\ya(X) \ot \Phi_\ya(Y))\Pi_{\ya\yb}^\yc, \\
            &= \Tr_{V_\ya \ot V_\yb} (\Phi_\ya(X) \ot \Phi_\ya(Y)), \\
            &= \vp_\ya(X) \vp_\yb(Y).
        \end{split}
    \end{align}
    For the claim of positivity, note that $\Pi_{\ya\yb}^\yc$ is a orthogonal projection operator which implies $\vp^{\ya \yb}_\yc(X, Y)$ is the trace of the orthogonal restriction of the non-negative Hermitian operator $(\Phi_\ya(X) \ot \Phi_\ya(Y))$ over the subspace $(C_{\ya \yb} \ot V_\yc) \subseteq V_\ya \ot V_\yb$, meaning $\vp^{\ya \yb}_\yc(X, Y) \geq 0$ as claimed.
\end{proof}

\begin{thm}[Binomial theorem]
    \label{thm:the_binomial_theorem}
    For $X,Y \in \Mat_n(\mathbb{C})$ and $\yc \in \undomweightsp{n}$, we have
    \begin{equation}
        \label{eq:varphi_sum_sum}
        \vp_\yc(X+Y)=\sum_{\ya, \yb} \vp^{\ya \yb}_\yc(X, Y).
    \end{equation}
\end{thm}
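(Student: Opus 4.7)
The plan is to recognize both sides as traces of operators on $(\mathbb{C}^n)^{\otimes k}$ with $k = \syc$, then apply Schur-Weyl duality in two complementary ways. For the left-hand side, applying \cref{eq:X^k} with $X+Y$ in place of $X$ gives $(X+Y)^{\otimes k} \cong \bigoplus_{\yc'} \id_{W_{\yc'}} \otimes \pi_{\yc'}(X+Y)$. Letting $\Pi^\yc$ denote the Schur-Weyl projection onto $W_\yc \otimes V_\yc$ and using $\dim W_\yc = k!/H_\yc$ from \cref{eq:dim_W_hook_product}, one obtains
\begin{equation*}
    \vp_\yc(X+Y) = \frac{1}{k!}\,\Tr_{(\mathbb{C}^n)^{\otimes k}}\Pi^\yc (X+Y)^{\otimes k}.
\end{equation*}

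Next, I would expand the matrix sum combinatorially as $(X+Y)^{\otimes k} = \sum_{S \subseteq [k]} M_S$, where $M_S$ places $X$ in the tensor slots indexed by $S$ and $Y$ in the remaining slots. Since $\Pi^\yc$ commutes with the $S_k$-action on tensor factors and $M_{\sigma S} = \sigma M_S \sigma^{-1}$, the trace $\Tr \Pi^\yc M_S$ depends only on $|S|$, so
\begin{equation*}
    \vp_\yc(X+Y) = \frac{1}{k!}\sum_{j=0}^{k}\binom{k}{j}\Tr\Pi^\yc\bigl(X^{\otimes j}\otimes Y^{\otimes(k-j)}\bigr).
\end{equation*}
For fixed $j$, I would apply Schur-Weyl separately to the two factors, identifying $(\mathbb{C}^n)^{\otimes j}\otimes(\mathbb{C}^n)^{\otimes(k-j)}$ with $\bigoplus_{\ya \vdash_n j,\,\yb \vdash_n (k-j)} (W_\ya \otimes W_\yb)\otimes(V_\ya \otimes V_\yb)$, under which $X^{\otimes j}\otimes Y^{\otimes(k-j)}$ acts as $\bigoplus_{\ya,\yb}\id_{W_\ya \otimes W_\yb}\otimes(\pi_\ya(X)\otimes\pi_\yb(Y))$. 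Refining further by the $U(n)$-decomposition $V_\ya\otimes V_\yb=\bigoplus_{\yc'}C_{\ya\yb}^{\yc'}\otimes V_{\yc'}$ of \cref{eq:tensor_product} and comparing with the coarse Schur-Weyl decomposition of $(\mathbb{C}^n)^{\otimes k}$, the operator $\Pi^\yc$ takes the block-diagonal form $\bigoplus_{\ya,\yb}\id_{W_\ya \otimes W_\yb}\otimes \Pi_{\ya\yb}^\yc$. Using $\dim W_\ya=j!/H_\ya$ and $\dim W_\yb=(k-j)!/H_\yb$ turns the $j$-th trace into $j!(k-j)!\sum_{\ya \vdash j,\,\yb \vdash (k-j)} \vp^{\ya\yb}_\yc(X,Y)$, after which $\binom{k}{j}j!(k-j)!=k!$ cancels the prefactor and the size constraint $\sya+\syb=k$ (already enforced by non-vanishing of $\vp^{\ya\yb}_\yc$) can be dropped, yielding \cref{eq:varphi_sum_sum}.

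The main obstacle is justifying the block-diagonal identification $\Pi^\yc=\bigoplus_{\ya,\yb}\id_{W_\ya \otimes W_\yb}\otimes \Pi_{\ya\yb}^\yc$. This is the statement that the $V_\yc$-isotypic component of $(\mathbb{C}^n)^{\otimes k}$ is computed consistently whether one decomposes with the full $S_k$-action at once or first splits the $k$ tensor factors into groups of $j$ and $k-j$, decomposes each piece under $S_j\times S_{k-j}\times U(n)$, and recombines via the tensor product rule for $U(n)$. Equivalently, this is the Littlewood--Richardson branching rule $W_\yc|_{S_j \times S_{k-j}}\cong\bigoplus_{\ya,\yb}(W_\ya\otimes W_\yb)^{\oplus c_{\ya\yb}^\yc}$; once this compatibility is in hand, the remainder of the argument is bookkeeping with dimensions and binomial coefficients.
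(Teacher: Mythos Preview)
Your proposal is correct and follows essentially the same route as the paper: both start from $\vp_\yc(X+Y)=\tfrac{1}{k!}\Tr\,\Pi^\yc(X+Y)^{\ot k}$, expand the tensor power combinatorially, use $S_k$-equivariance of $\Pi^\yc$ to reduce to the ordered term $X^{\ot j}\ot Y^{\ot(k-j)}$, and then insert the Schur--Weyl decompositions on the two factors. The paper phrases the last step as inserting resolutions of identity $\sum_\ya\Pi^\ya$ and $\sum_\yb\Pi^\yb$ and then passing from $\Tr_{(\mathbb{C}^n)^{\ot k}}\Pi^\yc(\Pi^\ya\ot\Pi^\yb)(\cdots)$ to $\Tr_{V_\ya\ot V_\yb}\Pi_{\ya\yb}^\yc(\cdots)$ without comment; your ``main obstacle'' is exactly that passage, and it is not really an obstacle: $\Pi^\yc$ is the $U(n)$-isotypic projector onto type $\yc$, the $U(n)$-action preserves each block $(W_\ya\ot W_\yb)\ot(V_\ya\ot V_\yb)$, and the isotypic projector on that block is by definition $\id\ot\Pi_{\ya\yb}^\yc$, so the block-diagonal form is immediate.
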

\begin{proof}
    Let $k=\syc$ and let $\rho_k$ denote the representation of the monoid $(S_k \times \Mat_n(\mathbb{C}))$ on the space $(\mathbb{C}^n)^{\ot k}$, and $\Pi^\yc$ the orthogonal projector to the summand $W_\yc \ot V_\yc$ in the Schur-Weyl decomposition \cref{eq:schur-weyl}.
    Recall that $\dim(W_{\yx}) = \syx! / H_{\yx}$.
    Then we have for all $k$ and $X \in \Mat_n(\mathbb{C}))$,
    \begin{align}
        \begin{split}
            \vp_{\yx}(X)
            = \frac{\dim(W_{\yx})}{k!} \Tr_{V_\yx}[\pi_\yx(X)]
            = \frac{1}{k!} \Tr_{(\mathbb{C}^n)^{\ot k}}[\Pi_{\yx} \rho_{k}(X)].
        \end{split}
    \end{align}
    Moreover, we have
    \begin{equation}
        (X+Y)^{\ot k} = \sum_{l=0}^k \frac{1}{l!(k-l)!}\sum_{\sigma \in S_k} \rho_k(\sigma) (X^{\ot l} \ot Y^{\ot (k-l)})\rho_k(\sigma^{-1})
    \end{equation}
    which is a non-commutative counterpart of the binomial decomposition.
    We then expand
    \begin{align}
        \begin{split}
            &\vp_\yc(X+Y) \\
            &\quad= \frac{1}{k!} \Tr_{(\mathbb{C}^n)^{\ot k}} \Pi^\yc  (X+Y)^{\ot k}, \\
            &\quad= \frac{1}{k!}\sum_{l=0}^k \sum_{\sigma \in S_k} \frac{1}{l! (k-l)!} \Tr_{(\mathbb{C}^n)^{\ot k}} \Pi^\yc \rho_k(\sigma) (X^{\ot l} \ot Y^{\ot (k-l)})\rho_k(\sigma^{-1}).
        \end{split}
    \end{align}
    Here we make use of cyclicity of the trace and the $S_k$-equivariance of $\Pi^{\yc}$, i.e., $\rho_k(\sigma^{-1}) \Pi^\yc \rho_k(\sigma) = \Pi^\yc$ to obtain
    \begin{align}
        \begin{split}
            \vp_\yc(X+Y)
            = \sum_{l=0}^k \frac{1}{l! (k-l)!} \Tr_{(\mathbb{C}^n)^{\ot k}} \Pi^\yc (X^{\ot l} \ot Y^{\ot (k-l)}).
        \end{split}
    \end{align}
    Now we freely insert operator identities $\sum_{\ya \vdash k-l} \Pi^{\ya} = \id_{\mathbb C^n}^{\otimes k-l}$ and $\sum_{\yb \vdash l} \Pi^{\yb} = \id_{\mathbb C^n}^{\otimes l}$ to obtain the claimed result:
    \begin{align}
        \begin{split}
            \vp_\yc(X+Y)
            &= \sum_{\ya, \yb}  \frac{1}{\sya! \syb!} \Tr_{(\mathbb{C}^n)^{\ot k}} \Pi^\yc (\Pi^{\ya}X^{\ot l} \ot \Pi^{\yb} Y^{\ot (k-l)}), \\
            &= \sum_{\ya, \yb}  \frac{1}{H_\ya H_\yb} \Tr_{V_\ya \ot V_\yb} \Pi_{\ya \yb}^\yc  (\pi_\ya(X) \ot \pi_\yb(Y)), \\
            &= \sum_{\ya, \yb}  \Tr_{V_\ya \ot V_\yb} \Pi_{\ya \yb}^\yc  (\Phi_\ya(X) \ot \Phi_\yb(Y)), \\
            &= \sum_{\ya, \yb}  \yf_{\ya \yb}^\yc(X,Y).
        \end{split}
    \end{align}
\end{proof}

\subsubsection{The function \texorpdfstring{$\vp^{\ya \yb \yd}_{\yc \ye \yf}$}{phiabcdef}}

For a given tuple of positive dominant weights $(\ya,\yb,\yc,\yd,\ye,\yf) \in \undomweightsp{n}$, we introduce a function of three matrices:
\begin{equation}
    \vp^{\ya \yb \yd}_{\yc \ye \yf}(X, Y, Z)
    \coloneqq
    \Tr_{V_\ya \ot V_\yb \ot V_\yd}
    \projr (\Phi_\ya(X) \ot \Phi_\yb(Y) \ot \Phi_\yd(Z))\projl.
\end{equation}
Note that $\vp^{\ya \yb \yd}_{\yc \ye \yf}(X,Y,Z) = 0$ whenever $\abcdefsymbol = 0$.
Even when restricted to $(\hermp{n})^{\times 3}$, functions $\vp^{\ya \yb \yd}_{\yc \ye \yf}$ take complex values, in general.
However, the following is true:
\begin{prop}
    \label{prop:non_neg_partial}
    For $X,Y,Z \in \hermp{n}$, the functions
    \begin{align}
        \begin{split}
            \vp^{\ya\yb\yd}_{\_ \ye\yf}(X,Y,Z) & = \sum_{\yc}\vp^{\ya\yb\yd}_{\yc\ye\yf}(X,Y,Z), \\
            \vp^{\ya\yb\yd}_{\yc\ye \_}(X,Y,Z) & = \sum_{\yf}\vp^{\ya\yb\yd}_{\yc\ye\yf}(X,Y,Z)
        \end{split}
    \end{align}
    take non-negative real values.
\end{prop}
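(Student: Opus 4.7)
The plan is to exploit the fact that summing $\projl$ over $\yc$ (with $\ye$ held fixed) yields the orthogonal projection $P_\ye$ onto the $V_\ye$-isotypic component of $V_\ya \ot V_\yb \ot V_\yd$, and that summing $\projr$ over $\yf$ yields the same projection. This is forced by the uniqueness of isotypic decomposition: both bracketings in \cref{eq:triple_tensor_product} refine the common $V_\ye$-isotypic decomposition, so taking a partial sum (over the finer index that distinguishes the two bracketings) collapses each refined projector to a single projector depending only on $\ye$. In particular, $\sum_\yc \projl = \sum_\yf \projr = P_\ye$, and since $\projr$ is an orthogonal sub-projection of $P_\ye$, it satisfies $\projr P_\ye = P_\ye \projr = \projr$; analogously for $\projl$.

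Given this, I would compute the first sum by pulling the summation inside the trace defining $\vp^{\ya\yb\yd}_{\yc\ye\yf}(X,Y,Z)$, yielding
\[
\sum_\yc \vp^{\ya\yb\yd}_{\yc\ye\yf}(X,Y,Z)
= \Tr_{V_\ya \ot V_\yb \ot V_\yd}\bigl[ \projr \, M \, P_\ye \bigr]
= \Tr\bigl[ \projr \, M \, \projr \bigr],
\]
where $M \coloneqq \Phi_\ya(X) \ot \Phi_\yb(Y) \ot \Phi_\yd(Z)$, and where the second equality uses cyclicity of the trace together with $\projr^2 = \projr$. By the preceding proposition, each $\Phi_\gamma(\cdot)$ factor is a non-negative Hermitian operator on non-negative input, hence so is their tensor product $M$. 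Therefore $\projr M \projr$ is a non-negative Hermitian operator, and its trace is a non-negative real number. The argument for the $\yf$-summed quantity is symmetric, with $\projl$ absorbing $P_\ye$ in place of $\projr$.

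The step that I expect to require the most care is the identification $\sum_\yc \projl = \sum_\yf \projr = P_\ye$. While morally immediate from Schur's lemma and the intrinsic character of the isotypic decomposition of $V_\ya \ot V_\yb \ot V_\yd$, a careful proof should verify that the Hermitian structures implicitly used to define $\projl$ and $\projr$ via \cref{eq:triple_tensor_product} agree, so that both partial sums are indeed \emph{orthogonal} projectors onto the same subspace. Once this equality is in hand, the remaining ingredients — absorption of $P_\ye$ into $\projr$ (or $\projl$), cyclicity of the trace, and positivity of tensor products of non-negative operators — are routine.
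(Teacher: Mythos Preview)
Your proposal is correct and follows essentially the same approach as the paper: both identify $\sum_\yc \projl$ with the isotypic projector $P_\ye$, absorb it into $\projr$, and conclude non-negativity from the trace of a projection-restricted non-negative operator. The paper writes the result as $\Tr[\projr M]$ rather than $\Tr[\projr M \projr]$, but these coincide by the same cyclicity-and-idempotence step you spell out.
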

\begin{proof}
    From the definition of $\vp^{\ya \yb \yd}_{\yc \ye \yf}(X, Y, Z)$ and the identity $\sum_\yc \projl = \Pi^\ye_{\ya \yb \yd}$, where $\Pi^\ye_{\ya \yb \yd}$ is the orthogonal projector to the isotypical component of $\ye$ in the triple tensor product, we obtain
    \begin{align}
        \begin{split}
            \vp^{\ya\yb\yd}_{\_ \ye\yf}(X,Y,Z)
            =
            \Tr_{V_\ya \ot V_\yb \ot V_\yd} \projr (\Phi_\ya(X) \ot \Phi_\yb(Y) \ot \Phi_\yd(Z))
        \end{split}
    \end{align}
    because $\Pi^\ye_{\ya \yb \yd} \projr = \projr$.
    The right hand side in the equation above is the trace of the orthogonal restriction of the non-negative Hermitian operator $(\Phi_\ya(X) \ot \Phi_\yb(Y) \ot \Phi_\yd(Z))$ to the image of the projector $\projr$, hence $\vp^{\ya\yb\yd}_{\_ \ye\yf}(X,Y,Z) \geq 0$.
    The proof for $\vp^{\ya\yb\yd}_{\yc\ye \_}(X,Y,Z)$ is analogous.
\end{proof}

\subsubsection{Estimates for \texorpdfstring{$\vp^{\ya \yb \yd}_{\yc \ye \yf}$}{phiabcdef}}

Here we derive bounds on the magnitude of the quantity $\vp^{\ya \yb \yd}_{\yc \ye \yf}(X, Y, Z)$.
To begin we need the following proposition.
\begin{prop}\label{prop:cauchy_schwarz_ineq_double_proj}
    Let $P, Q$ be two orthogonal projections, and $\Omega$ be a non-negative Hermitian operator.
    Then,
    \begin{equation}
        \label{eq:cauchy_schwarz_ineq_double_proj}
        \abs{\Tr(Q \Omega P)} \leq \norm{PQ}_{\infty} (\Tr Q\Omega)^{\frac{1}{2}}(\Tr P\Omega )^{\frac{1}{2}}.
    \end{equation}
\end{prop}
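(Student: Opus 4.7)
The plan is to apply a refined Cauchy--Schwarz argument for the Hilbert--Schmidt inner product, after inserting $P = P^{2}$ in order to expose the operator $PQ$ as a single factor. First, using cyclicity of the trace and $P^{2} = P$, I would rewrite
\begin{equation*}
    \Tr(Q\Omega P) = \Tr(PQ\Omega) = \Tr\bigl((\Omega^{1/2} P)(PQ\Omega^{1/2})\bigr),
\end{equation*}
and then apply the matrix Cauchy--Schwarz inequality $\abs{\Tr(AB)} \leq \norm{A}_{2}\norm{B}_{2}$ with $A = \Omega^{1/2} P$ and $B = PQ\Omega^{1/2}$. The first Hilbert--Schmidt norm is immediate: $\norm{\Omega^{1/2}P}_{2}^{2} = \Tr(P\Omega P) = \Tr(P\Omega)$ by $P^{2} = P$, yielding the desired factor $\sqrt{\Tr(P\Omega)}$.

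The second Hilbert--Schmidt norm expands via cyclicity and $P^{2} = P$ as
\begin{equation*}
    \norm{PQ\Omega^{1/2}}_{2}^{2} = \Tr(\Omega^{1/2} QP \cdot PQ\Omega^{1/2}) = \Tr(QPQ \cdot \Omega),
\end{equation*}
so the crux of the argument is to show that $\Tr(QPQ \cdot \Omega) \leq \norm{PQ}_{\infty}^{2}\Tr(Q\Omega)$. I would derive this from the sharpened operator inequality
\begin{equation*}
    QPQ \leq \norm{PQ}_{\infty}^{2}\, Q.
\end{equation*}
To justify it, I would observe that $QPQ = (PQ)^{\dagger}(PQ)$ is self-adjoint and vanishes on $\operatorname{range}(Q)^{\perp}$, while for any $v \in \operatorname{range}(Q)$ one has
\begin{equation*}
    \langle v, QPQ v\rangle = \norm{PQv}^{2} \leq \norm{PQ}_{\infty}^{2}\norm{v}^{2} = \norm{PQ}_{\infty}^{2}\langle v, Qv\rangle.
\end{equation*}
Pairing this operator inequality against the non-negative $\Omega$---using that $X \mapsto \Tr(X\Omega)$ preserves operator order when $\Omega \geq 0$---then yields the bound, and assembling the three estimates completes the proof.

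The main subtlety is the replacement of the naive bound $QPQ \leq \norm{PQ}_{\infty}^{2}\, I$ by the sharper $QPQ \leq \norm{PQ}_{\infty}^{2}\, Q$; without this improvement one would only obtain $\Tr(QPQ\Omega) \leq \norm{PQ}_{\infty}^{2}\Tr\Omega$, losing the $\Tr(Q\Omega)$ factor required by \cref{eq:cauchy_schwarz_ineq_double_proj}. The projection property $Q = Q^{2}$, which confines the range of $QPQ$ to $\operatorname{range}(Q)$, is precisely what is needed to recover the correct scaling.
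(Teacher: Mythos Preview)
Your proof is correct and follows essentially the same route as the paper: insert $P = P^{2}$, apply Cauchy--Schwarz for the Hilbert--Schmidt inner product to split off $(\Tr P\Omega)^{1/2}$, and then bound the remaining factor $\Tr(QPQ\,\Omega)$ by $\norm{PQ}_{\infty}^{2}\Tr(Q\Omega)$. The only cosmetic difference is that the paper reaches this last bound by diagonalizing $Q\Omega Q$ and summing $\omega_{j}\norm{PQv_{j}}^{2}$, whereas you phrase it as the operator inequality $QPQ \leq \norm{PQ}_{\infty}^{2}\,Q$ paired against $\Omega$; these are two equivalent ways of stating the same estimate.
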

\begin{proof}
    Using the Cauchy-Schwarz inequality for the Hilbert-Schmidt norm, we obtain
    \begin{align}
        \begin{split}
            \abs{\Tr(Q \Omega P)}
            &= \abs{\Tr(P Q \Omega P)}, \\
            &= \abs{\Tr(P Q \Omega^{\frac{1}{2}}) (\Omega^{\frac{1}{2}} P)}, \\
            &\leq (\Tr P Q \Omega Q P)^{\frac{1}{2}} (\Tr P \Omega P)^{\frac{1}{2}}, \\
            &= (\Tr P Q \Omega Q)^{\frac{1}{2}} (\Tr P \Omega)^{\frac{1}{2}}.
        \end{split}
    \end{align}
    Now let $\{ v_j \}_{j=1}^{n}$ denote a set of orthonormal eigenvectors for the operator $Q \Omega Q$ with eigenvalues $\omega_j \geq 0$.
    Then,
    \begin{align}
        \begin{split}
            \Tr(P Q \Omega Q)
            &= \Tr (Q P^2 Q)(Q \Omega Q), \\
            &= \sum_j \omega_j \norm{P Q v_j}^2, \\
            &\leq \norm{P Q}_\infty^2  \sum_{j} \omega_j, \\
            &= \norm{P Q}_\infty^2 \Tr(Q \Omega).
        \end{split}
    \end{align}
    This completes the proof.
\end{proof}
Applying this proposition to the definition of $\vp^{\ya\yb\yd}_{\yc\ye\yf}(X,Y,Z)$ we conclude the following.
\begin{prop}
    \label{prop:varphi_tet_main_ineq}
    Let $X,Y,Z \in \hermp{n}$. Then,
    \begin{equation}
        \abs{\vp^{\ya\yb\yd}_{\yc\ye\yf}(X,Y,Z)} \leq \norm{\abcdefsymbol}_{\infty} \vp^{\ya\yb\yd}_{\_\ye\yf}(X,Y,Z)^{\frac{1}{2}} \vp^{\ya\yb\yd}_{\yc\ye\_}(X,Y,Z)^{\frac{1}{2}}.
    \end{equation}
\end{prop}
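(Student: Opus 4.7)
The plan is to recognize that $\vp^{\ya\yb\yd}_{\yc\ye\yf}(X,Y,Z)$ is exactly of the form $\Tr(Q\Omega P)$ to which \cref{prop:cauchy_schwarz_ineq_double_proj} directly applies. Specifically, I would set $P=\projl$, $Q=\projr$, and $\Omega = \Phi_\ya(X) \ot \Phi_\yb(Y) \ot \Phi_\yd(Z)$. The operator $\Omega$ is a tensor product of non-negative Hermitian operators (by the proposition in \cref{sec:background_rep_theory} establishing positivity of $\Phi_\yx(X)$ on $\hermp{n}$), hence itself non-negative Hermitian, and $P,Q$ are orthogonal projections by construction. Invoking \cref{eq:cauchy_schwarz_ineq_double_proj} therefore yields
\begin{equation*}
    \abs{\vp^{\ya\yb\yd}_{\yc\ye\yf}(X,Y,Z)} \leq \norm{\projl \projr}_\infty \bigl(\Tr \projr \Omega\bigr)^{\frac{1}{2}} \bigl(\Tr \projl \Omega\bigr)^{\frac{1}{2}}.
\end{equation*}

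Next I would identify each of the three factors on the right-hand side. For the operator norm, the relation \cref{eq:prod_proj_6j} asserts $\projl \projr \simeq \abcdefsymbol \ot \id_{V_\ye}$ up to isometries, and since the infinity norm is invariant under isometries and under tensoring with the identity, this gives $\norm{\projl \projr}_\infty = \norm{\abcdefsymbol}_\infty$. For the two trace factors, I would reuse the computation already carried out in the proof of \cref{prop:non_neg_partial}: the identity $\sum_\yc \projl = \Pi^\ye_{\ya\yb\yd}$ combined with $\Pi^\ye_{\ya\yb\yd} \projr = \projr$ shows that $\Tr \projr \Omega = \vp^{\ya\yb\yd}_{\_ \ye\yf}(X,Y,Z)$, and symmetrically $\Tr \projl \Omega = \vp^{\ya\yb\yd}_{\yc\ye \_}(X,Y,Z)$. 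Substituting these three identifications yields the claimed inequality.

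There is no real obstacle here; the proposition is essentially a packaged Cauchy-Schwarz inequality. The only delicate point is to confirm that the identification $\norm{\projl \projr}_\infty = \norm{\abcdefsymbol}_\infty$ really does follow from \cref{eq:prod_proj_6j}, which requires remembering that $\projl$ and $\projr$ both project into the isotypic component of $V_\ye$ (so their product lives inside $\Hom$ of that component) and that the $6j$-symbol encodes the action on multiplicity spaces while acting as the identity on $V_\ye$. With this in hand, the calculation is a direct substitution and the proof is complete in a few lines.
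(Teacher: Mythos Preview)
Your proposal is correct and matches the paper's proof essentially line for line: set $\Omega=\Phi_\ya(X)\ot\Phi_\yb(Y)\ot\Phi_\yd(Z)$, apply \cref{prop:cauchy_schwarz_ineq_double_proj} with $P=\projl$ and $Q=\projr$, and identify the norm factor via \cref{eq:prod_proj_6j}. The only difference is that you spell out the identification of the two trace factors with $\vp^{\ya\yb\yd}_{\_\ye\yf}$ and $\vp^{\ya\yb\yd}_{\yc\ye\_}$ explicitly, whereas the paper leaves this implicit.
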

\begin{proof}
    Let $\Omega_{\ya\yb\yd}^{X,Y,Z} \coloneqq \Phi_{\ya}(X)\ot\Phi_{\yb}(Y)\ot\Phi_{\yd}(Z)$.
    Under the assumption that $X,Y,Z \geq 0$, we have that $\Omega_{\ya\yb\yd}^{X,Y,Z} \geq 0$.
    Therefore, \cref{prop:cauchy_schwarz_ineq_double_proj} implies
    \begin{align}
        \begin{split}
            &\abs{\Tr[\projr\Omega_{\ya\yb\yd}^{X,Y,Z}\projl]} \\
            &\qquad\leq \norm{\projl \projr}_{\infty} \Tr[\projl \Omega_{\ya\yb\yd}^{X,Y,Z}]^{\frac{1}{2}}\Tr[\projr \Omega_{\ya\yb\yd}^{X,Y,Z}]^{\frac{1}{2}}
        \end{split}
    \end{align}
    which is equivalent to the claim due to \cref{eq:prod_proj_6j}.
\end{proof}
Our next proposition enables us to place bounds on $\vp^{\ya\yb\yd}_{\_\ye\yf}(X,Y,Z)$ and $\vp^{\ya\yb\yd}_{\yc\ye\_}(X,Y,Z)$ which depend only the eigenvalues of $X,Y,X+Y,Z,X+Y+Z$, and $Y+Z$ and not their eigenvectors.
These bounds, in conjunction with \cref{prop:varphi_tet_main_ineq} will help us in the proof of \cref{thm:tet_iff} later.
\begin{prop}
    \label{prop:tet_psum_ineq}
    Let $X,Y,Z \in \hermp{n}$.
    Then,
    \begin{align}
        \label{eq:psum_prod_bounds}
        \begin{split}
            \vp^{\ya\yb\yd}_{\_\ye\yf}(X,Y,Z) &\leq \vp_{\ya}(X)\vp_{\yf}(Y+Z),\\
            \vp^{\ya\yb\yd}_{\_\ye\yf}(X,Y,Z) &\leq \vp_{\ya}(X)\vp_{\yb}(Y)\vp_{\yd}(Z),\\
            \vp^{\ya\yb\yd}_{\_\ye\yf}(X,Y,Z) &\leq \vp_{\ye}(X+Y+Z),
        \end{split}
        \begin{split}
            \vp^{\ya\yb\yd}_{\yc\ye\_}(X,Y,Z) &\leq \vp_{\yc}(X+Y)\vp_{\yd}(Z),\\
            \vp^{\ya\yb\yd}_{\yc\ye\_}(X,Y,Z) &\leq \vp_{\ya}(X)\vp_{\yb}(Y)\vp_{\yd}(Z),\\
            \vp^{\ya\yb\yd}_{\yc\ye\_}(X,Y,Z) &\leq \vp_{\ye}(X+Y+Z).
        \end{split}
    \end{align}
\end{prop}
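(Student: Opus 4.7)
The plan is to reduce all six inequalities to a common template. From the proof of \cref{prop:non_neg_partial} we can write
\begin{equation*}
    \vp^{\ya\yb\yd}_{\_\ye\yf}(X,Y,Z) = \Tr\projr\,\Omega \quad\text{and}\quad \vp^{\ya\yb\yd}_{\yc\ye\_}(X,Y,Z) = \Tr\projl\,\Omega,
\end{equation*}
where $\Omega \coloneqq \Phi_\ya(X)\ot\Phi_\yb(Y)\ot\Phi_\yd(Z)$ is non-negative Hermitian because $X,Y,Z\geq 0$. The key observation is that if $P$ and $Q$ are orthogonal projections with $P\leq Q$ (equivalently, $\mathrm{Im}(P)\subseteq \mathrm{Im}(Q)$) and $\Omega\geq 0$, then $\Tr P\,\Omega \leq \Tr Q\,\Omega$ since $Q-P$ is itself a non-negative Hermitian operator. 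I would therefore dominate each of $\projr$ and $\projl$ by three different orthogonal projections $Q$ whose traces against $\Omega$ manifestly factorize into $\vp$-quantities.

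For $\vp^{\ya\yb\yd}_{\_\ye\yf}$ the three choices of $Q$ are $\id$, $\id_{V_\ya}\ot\Pi^\yf_{\yb\yd}$, and $\Pi^\ye_{\ya\yb\yd}$; each dominates $\projr$ because the image $C^\ye_{\ya\yf}\ot C^\yf_{\yb\yd}\ot V_\ye$ of $\projr$ is a $G$-invariant subspace that already lies inside both the $\yf$-isotypic component of $V_\yb\ot V_\yd$ and the $\ye$-isotypic component of the triple tensor product. Taking traces against $\Omega$ produces respectively $\vp_\ya(X)\vp_\yb(Y)\vp_\yd(Z)$ (direct factorization across tensor factors), $\vp_\ya(X)\cdot\vp^{\yb\yd}_\yf(Y,Z)$, and $\Tr\Pi^\ye_{\ya\yb\yd}\,\Omega$. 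The middle quantity is bounded by $\vp_\ya(X)\vp_\yf(Y+Z)$ using the binomial theorem $\vp_\yf(Y+Z) = \sum_{\yb,\yd}\vp^{\yb\yd}_\yf(Y,Z)$ from \cref{thm:the_binomial_theorem} together with term-wise non-negativity of the binomial expansion when $Y,Z\geq 0$. The mirror argument, replacing $\projr$ with $\projl$ and dominating by $\id$, $\Pi^\yc_{\ya\yb}\ot\id_{V_\yd}$, and $\Pi^\ye_{\ya\yb\yd}$, yields the three bounds on $\vp^{\ya\yb\yd}_{\yc\ye\_}$.

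The only remaining step is the bound $\Tr\Pi^\ye_{\ya\yb\yd}\,\Omega \leq \vp_\ye(X+Y+Z)$, which requires a trilinear version of the binomial theorem,
\begin{equation*}
    \vp_\ye(X+Y+Z) = \sum_{\ya,\yb,\yd}\Tr_{V_\ya\ot V_\yb\ot V_\yd}\Pi^\ye_{\ya\yb\yd}\bigl(\Phi_\ya(X)\ot\Phi_\yb(Y)\ot\Phi_\yd(Z)\bigr).
\end{equation*}
I would prove this by rerunning the Schur--Weyl argument of \cref{thm:the_binomial_theorem} starting from the trinomial expansion of $(X+Y+Z)^{\ot k}$ as a sum over $\sigma\in S_k$ and compositions $l_1+l_2+l_3=k$ weighted by $1/(l_1!\,l_2!\,l_3!)$, collapsing the permutation averaging via cyclicity of the trace and $S_k$-equivariance of $\Pi^\ye$, and finally inserting the Schur--Weyl identities $\sum_\ya\Pi^\ya = \id$ on each tensor-power factor. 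Since every summand on the right is the trace of a projector against a non-negative operator and therefore non-negative, each individual term is bounded above by the full sum $\vp_\ye(X+Y+Z)$, completing the argument. This trilinear identity is the only step requiring even mild work beyond routine operator manipulations, and since it is a direct adaptation of an existing proof it is not a serious obstacle.
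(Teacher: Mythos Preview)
Your proposal is correct and is essentially the paper's own argument, recast in the language of projection domination rather than summation over non-negative terms. The paper phrases each inequality as ``a single non-negative summand is bounded by the full sum'' (e.g.\ $\sum_{\ye,\yb,\yd}\vp^{\ya\yb\yd}_{\_\ye\yf}=\vp_\ya(X)\vp_\yf(Y+Z)$), which is exactly your observation $\projr\leq\id_{V_\ya}\ot\Pi^\yf_{\yb\yd}$ followed by the binomial bound $\vp^{\yb\yd}_\yf(Y,Z)\leq\vp_\yf(Y+Z)$; the paper likewise invokes the trilinear binomial identity implicitly when it says the third bound follows from summing over $\yf,\ya,\yb,\yd$, and your explicit sketch of that identity via the trinomial expansion of $(X+Y+Z)^{\ot k}$ is precisely what is needed to fill that gap.
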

\begin{proof}
    Each of these bounds are consequences of \cref{thm:the_binomial_theorem} and \cref{prop:non_neg_partial}.
    Summing $\vp^{\ya\yb\yd}_{\_\ye\yf}(X,Y,Z)$ over $\ye$ yields
    \begin{align}
        \label{eq:psum_yf_over_ye}
        \begin{split}
            \sum_{\ye} \vp^{\ya\yb\yd}_{\_\ye\yf}(X,Y,Z)
            &= \sum_{\ye} \Tr_{V_\ya \ot V_\yb \ot V_\yd} \projr (\Phi_\ya(X) \ot \Phi_\yb(Y) \ot \Phi_\yd(Z)), \\
            &= \Tr_{V_\ya \ot V_\yb \ot V_\yd} (\id_{V_{\ya}} \ot \Pi^{\yf}_{\yb\yd}) (\Phi_\ya(X) \ot \Phi_\yb(Y) \ot \Phi_\yd(Z)), \\
            &= \Tr_{V_\ya} \Phi_\ya(X) \Tr_{V_\yb \ot V_\yd} \Pi^{\yf}_{\yb\yd} (\Phi_\yb(Y) \ot \Phi_\yd(Z)), \\
            &= \vp_{\ya}(X)\vp_{\yf}^{\yb\yd}(Y,Z).
        \end{split}
    \end{align}
    Then, summing the result over $\yb$ and $\yd$ and applying \cref{thm:the_binomial_theorem} produces
    \begin{align}
        \begin{split}
            \sum_{\ye\yb\yd} \vp^{\ya\yb\yd}_{\_\ye\yf}(X,Y,Z)
            &= \vp_{\ya}(X)\sum_{\yb\yd}\vp_{\yf}^{\yb\yd}(Y,Z)
            = \vp_{\ya}(X)\vp_{\yf}(Y+Z).
        \end{split}
    \end{align}
    Now using the non-negativity of $\vp^{\ya\yb\yd}_{\_\ye\yf}(X,Y,Z)$, we obtain the first claimed inequality in \cref{eq:psum_prod_bounds}.
    Similarly, the second claimed inequality in \cref{eq:psum_prod_bounds} follows analogously from summing
    $\vp^{\ya\yb\yd}_{\_\ye\yf}(X,Y,Z)$ over $\ye$ and $\yf$,
    while the third claimed inequality in \cref{eq:psum_prod_bounds} follows from summing $\vp^{\ya\yb\yd}_{\_\ye\yf}(X,Y,Z)$ over $\yf$, $\ya$, $\yb$ and $\yd$.
    The inequalities concerning $\vp^{\ya\yb\yd}_{\yc\ye\_}(X,Y,Z)$ are proven in an analogous manner.
\end{proof}

\subsection{Probability measures}
\label{sec:background_probability}

In this section, we collect information about probability measures on finite sets and on conjugacy classes of Hermitian matrices needed later in the paper.

\subsubsection{Measures on finite sets}

Recall that for a finite set $[r]=\{ 1, \ldots, r\}$ a probability measure is defined by an array of non-negative real numbers $(p_1, \ldots, p_r)$ such that $\sum_{i=1}^r p_i =1$.
For two measures $p$ and $q$ on $[r]$, the Kolmogorov distance $K(p,q)$ between them is defined by:
\begin{equation}
    K(p,q) \coloneqq \frac{1}{2}\norm{p-q}_1
    = \frac{1}{2} \sum_{i=1}^r \abs{p_i - q_i}.
\end{equation}
Another measure of similarity between distributions $p$ and $q$ is the Bhattacharyya coefficient \cite{bhattacharyya1946measure,fuchs2002cryptographic}:
\begin{equation}
    \label{eq:Bhattacharyya_coefficient}
    \BC(p,q) = \sum_{i=1}^r \sqrt{p_iq_i}.
\end{equation}
For all distributions $p, q$, the Bhattacharyya coefficient $\BC(p,q)$ lies in the interval $[0,1]$, and $\BC(p,q)=1$ if and only if $p=q$.
Recall the following relation between
the Kolmogorov distance and the Bhattacharyya coefficient (see {\em e.g.} \cite[Proposition 5, Equation (41)]{fuchs2002cryptographic}):
\begin{equation}
    1-\BC(p,q) \leq K(p,q) \leq \sqrt{1-\BC(p,q)^2}.
\end{equation}
This implies\footnote{Here we used the inequality $\sqrt{1-x^2} \leq \exp(-x^2/2)$ which holds for all $x \in [0,1]$.}
\begin{equation}
    \label{eq:BC_norm_estimate}
    \BC(p,q) \leq \sqrt{1-K(p,q)^2} \leq \exp\left(- \frac{1}{2} K(p,q)^2\right)=
    \exp\left(- \frac{1}{8} \norm{p-q}_1^2\right).
\end{equation}
We extend the Bhattacharyya coefficient to non-normalized non-negative tuples where $\sum_i p_i = \sum_i q_i = \tau >0$.
In that case, the \textit{unnormalized} Bhattacharyya coefficient satisfies
\begin{equation}
    \BC(p,q) = \tau  \BC\left(\frac{p}{\tau}, \frac{q}{\tau}\right),
\end{equation}
and the estimate in \cref{eq:BC_norm_estimate} then reads
\begin{equation}
    \label{eq:bhattacharyya_vs_distance}
    \frac{\BC(p,q)}{\tau} \leq \exp\left(-\frac{1}{8\tau^2}\norm{p-q}_1^2\right).
\end{equation}
Finally, the Kullback-Leibler (KL) divergence is defined as
\begin{equation}
    \label{eq:KL_divergence}
    \kl{p}{q} = \sum_{i=1}^{n} p_i \ln \frac{p_i}{q_i}.
\end{equation}
It takes values between $0$ and $+\infty$, and it is equal to zero if and only if $p=q$.

\subsubsection{Measures on partitions}

Recall the set of eigenvalues of $n\times n$ non-negative Hermitian matrices is noted by $\eigsp{n}$.
One can associate to each $\ex \in \eigsp{n}$ a probability measure $p^\ex$ on the set $[n]$ by normalization:
\begin{equation}
    p^\ex_i = \frac{\ex_i}{\tex}, \quad \text{where} \quad \tex = \sum_{i=1}^n \ex_i.
\end{equation}
For each $\ex$ and each $k \geq 1$, using \cref{eq:varphi_normalization}, one can define a probability measure on the set of positive dominant weights of $U(n)$ with size $\syx = k$:
\begin{equation}
    \label{eq:schur_weyl_dist}
    \SW{\ex}(\yx) \coloneqq \frac{k!}{\tex^k} \vp_\yx(x) = \frac{\Tr[\Pi_{\yx} X^{\ot k}]}{\Tr[X^{\ot k}]},
\end{equation}
where $\vp_\yx(x) \coloneqq \vp_{\yx}(\diag(\ex_1, \ldots, \ex_n))$.
This probability distribution is known as the \textit{Schur-Weyl} distribution for $x$ of degree $k = \syx$ \cite{odonnell2016efficient}.
A key property of the Schur-Weyl distribution $\SW{\ex}(\yx)$ is that it behaves similarly to the multinomial distribution $m_{\ex}(\yx)$ where
\begin{equation}
    m_{\ex}(\yx) \coloneqq \frac{\syx!}{\tex^{k}}\frac{\ex_1^{\yx_1}}{\yx_1!}\frac{\ex_2^{\yx_2}}{\yx_2!} \cdots \frac{\ex_n^{\yx_n}}{\yx_n!}.
\end{equation}
\begin{prop}
    \label{prop:multinomial_bound}
    Let $\yx \in \undomweightsp{n}$ and $\ex \in \eigsp{n}$.
    Then
    \begin{equation}
        \SW{\ex}(\yx) \leq \dim V_{\yx} m_{\ex}(\yx).
    \end{equation}
\end{prop}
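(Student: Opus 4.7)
My plan is to reduce the inequality to two elementary estimates — one about Schur polynomials and one about hook products — which can then be combined by direct substitution.

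First, I would unpack both sides of the claim. Using $\vp_\yx(x) = s_\yx(\ex)/H_\yx$ in \cref{eq:schur_weyl_dist} gives
\[
\SW{\ex}(\yx) = \frac{k!}{\tex^k H_\yx}\, s_\yx(\ex)
\]
with $k = \syx$, while by definition
\[
\dim V_\yx \cdot m_\ex(\yx) = \dim V_\yx \cdot \frac{k!}{\tex^k \yx_1!\cdots \yx_n!}\, \ex_1^{\yx_1} \cdots \ex_n^{\yx_n}.
\]
After cancelling the common factor $k!/\tex^k$, the claim reduces to the pointwise inequality
\[
s_\yx(\ex) \leq \frac{\dim V_\yx \cdot H_\yx}{\yx_1!\cdots \yx_n!}\, \ex_1^{\yx_1}\cdots \ex_n^{\yx_n},
\]
which I would deduce from the two separate estimates (A) $s_\yx(\ex) \leq \dim V_\yx \cdot \ex_1^{\yx_1}\cdots \ex_n^{\yx_n}$ and (B) $H_\yx \geq \yx_1!\cdots \yx_n!$.

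For (A), I would use the classical tableau formula $s_\yx(\ex) = \sum_T \ex_1^{\alpha_1(T)} \cdots \ex_n^{\alpha_n(T)}$, summed over the $\dim V_\yx$ semistandard Young tableaux $T$ of shape $\yx$ with entries in $\{1, \ldots, n\}$ and content $\alpha(T)$. The column-strictness of $T$ forces every entry $\leq j$ to lie in the first $j$ rows, so $\alpha(T) \preceq \yx$ in the dominance order. Since the eigenvalues are ordered as $\ex_1 \geq \cdots \geq \ex_n \geq 0$, Abel summation applied to $\log \ex_i$ yields $\prod_i \ex_i^{\alpha_i} \leq \prod_i \ex_i^{\yx_i}$ whenever $\alpha \preceq \yx$ (the degenerate case $\ex_i = 0$ is harmless, since the left-hand monomial then vanishes whenever $\alpha_i > 0$). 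Summing over $T$ gives (A).

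For (B), I would compare hook lengths row-by-row. Writing $\yx'$ for the conjugate partition, every box $(i,j) \in \yx$ has hook length $h_\yx(i,j) = (\yx_i - j) + (\yx'_j - i) + 1 \geq \yx_i - j + 1$ since $\yx'_j \geq i$; taking the product along row $i$ gives $\prod_{j=1}^{\yx_i} h_\yx(i,j) \geq \yx_i!$, and taking the product over rows then yields (B). Combining (A) and (B) completes the proof. The only conceptual step is the dominance–majorization interplay in (A); (B) is purely combinatorial and the remainder is bookkeeping.
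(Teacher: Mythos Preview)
Your proof is correct and follows essentially the same approach as the paper's: both reduce the claim to the Schur-polynomial bound $s_\yx(\ex) \leq \dim V_\yx \cdot \ex_1^{\yx_1}\cdots \ex_n^{\yx_n}$ and the hook-product bound $H_\yx \geq \yx_1!\cdots \yx_n!$, proved in the same way. You supply a bit more detail on the dominance/majorization step in (A), but the structure is identical.
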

\begin{proof}
    The proof follows from (i) an upper bound on the Schur-polynomial $s_{\yx}(\ex)$,
    \begin{equation}
        s_{\yx}(\ex) \leq \dim V_{\yx} \ex_1^{\yx_1}\ex_2^{\yx_2}\cdots \ex_n^{\yx_n},
    \end{equation}
    which follows because $s_{\yx}(\ex)$ can be expressed as a sum over $\dim V_{\yx}$ multinomials of the form $\ex_1^{\mu_1}\ex_2^{\mu_2}\cdots \ex_n^{\mu_n}$ with $\ex_1^{\yx_1}\ex_2^{\yx_2}\cdots \ex_n^{\yx_n}$ the largest, and (ii) a lower bound on the hook product,
    \begin{equation}
        H_{\yx} \geq \yx_1!\cdots \yx_n!,
    \end{equation}
    which follows because $h_{\yx}(b) \geq \yx_i - j + 1$ holds for the box $b$ in row $i$ and column $j$.
\end{proof}
\begin{rem}
    We will henceforth relax the dimension prefactor $\dim V_{\yx}$ using the following dimension bound which holds for all Young diagrams with $k$ boxes:
    \begin{equation}
        \label{eq:unitary_irrep_dim_bound}
        \dim V_{\yx} \leq \polyk{\binom{n}{2}}.
    \end{equation}
\end{rem}
The following result is known as eigenvalue (or spectrum) estimation \cite{alicki1988symmetry,keyl2001estimating,hayashi2002quantum,odonnell2016efficient}.
The following result is \cite[Theorem 1]{christandl2006spectra}.
\begin{prop}[Eigenvalue estimation]
    \label{prop:eig_est}
    Let $x \in \eigsp{n}, \yx \in \undomweightsp{n}$ with $\syx = k$.
    Then the Schur-Weyl distribution $\SW{\ex}(\yx)$ satisfies
    \begin{equation}
        \SW{\ex}(\yx) \leq \polyk{\binom{n}{2}} \exp\left[-k \kl{p^{\yx}}{p^\ex}\right].
    \end{equation}
\end{prop}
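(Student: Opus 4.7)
The plan is to combine the multinomial upper bound (\cref{prop:multinomial_bound}) with the polynomial dimension estimate $\dim V_\yx \leq \polyk{\binom{n}{2}}$ from the preceding remark, and then to show that the multinomial probability mass function itself satisfies the classical method-of-types bound $m_\ex(\yx) \leq \exp\bigl(-k\,\kl{p^\yx}{p^\ex}\bigr)$. Concatenating these three ingredients yields the stated estimate directly.

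First, I would rewrite $m_\ex(\yx)$ by factoring out $\tex^k = \prod_i \tex^{\yx_i}$ using $\sum_i \yx_i = k$, obtaining
\begin{equation}
    m_\ex(\yx) = \binom{k}{\yx_1, \ldots, \yx_n} \prod_{i=1}^{n} (p^\ex_i)^{\yx_i},
\end{equation}
which is the standard multinomial probability of the occupation counts $(\yx_1, \ldots, \yx_n)$ arising from $k$ independent draws of a categorical random variable with law $p^\ex$. In particular this is visibly a non-negative quantity whose sum over all length-$k$ compositions is $1$.

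The heart of the argument is the method-of-types bound on the multinomial coefficient. I would apply the total mass identity for the multinomial distribution with parameter $p^\yx$, specialize to the single outcome $\mu = \yx$, and conclude
\begin{equation}
    \binom{k}{\yx_1, \ldots, \yx_n} \prod_{i=1}^{n} (p^\yx_i)^{\yx_i} \leq 1,
\end{equation}
with the usual convention $0^0 = 1$ to handle vanishing components of $\yx$. Rearranging this for the multinomial coefficient and substituting back into the formula for $m_\ex(\yx)$ gives
\begin{equation}
    m_\ex(\yx) \leq \prod_{i=1}^{n} \left(\frac{p^\ex_i}{p^\yx_i}\right)^{\yx_i}.
\end{equation}
Taking logarithms and using $\yx_i = k\,p^\yx_i$ converts the right-hand side into $\exp\bigl(-k\,\kl{p^\yx}{p^\ex}\bigr)$, as required.

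Combining this with \cref{prop:multinomial_bound} and the polynomial dimension estimate produces the claimed inequality. I do not anticipate any serious obstacle: the representation-theoretic content is already packaged into \cref{prop:multinomial_bound}, and what remains is a classical large-deviations calculation for the multinomial. The only care required is bookkeeping with the conventions $0\ln 0 = 0$ and $0^0 = 1$; in the degenerate case when $p^\ex_i = 0$ while $p^\yx_i > 0$, the left-hand side vanishes automatically and $\kl{p^\yx}{p^\ex} = +\infty$, so the bound is trivially satisfied.
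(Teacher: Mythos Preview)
Your proposal is correct and follows essentially the same route as the paper: both combine \cref{prop:multinomial_bound} with the dimension bound $\dim V_\yx \leq \polyk{\binom{n}{2}}$, and both reduce to the method-of-types estimate $m_\yx(\yx) \leq 1$ to extract the factor $\exp\bigl(-k\,\kl{p^\yx}{p^\ex}\bigr)$ from $m_\ex(\yx)$. The paper phrases the last step as the identity $m_\ex(\yx) = m_\yx(\yx)\exp\bigl(-k\,\kl{p^\yx}{p^\ex}\bigr)$ followed by $m_\yx(\yx)\leq 1$, while you bound the multinomial coefficient first and then substitute, but these are the same computation.
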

\begin{proof}
    From \cref{prop:multinomial_bound} we have the bound $\SW{\ex}(\yx) \leq \polyk{\binom{n}{2}} m_{\ex}(\yx)$.
    What remains is to bound the multinomial distribution $m_{\ex}(\yx)$ using the Kullback-Leibler divergence.
    To this end, note that $k = \syx = \yx_1 + \cdots + \yd_n$ which implies
    \begin{align}
        \begin{split}
            m_{\ex}(\yx)
            &= \frac{k!}{\tex^{k}}\frac{\ex_1^{\yx_1}}{\yx_1!}\cdots \frac{\ex_n^{\yx_n}}{\yx_n!}, \\
            &= \left(\frac{k!}{k^{k}} \frac{\yx_1^{\yx_1}}{\yx_1!} \cdots \frac{\yx_n^{\yx_n}}{\yx_n!}\right)
            \left(\frac{k^{k}}{\tex^{k}}\frac{\ex_1^{\yx_1}}{\yx_1^{\yx_1}}\cdots \frac{\ex_n^{\yx_n}}{\yx_n^{\yx_n}}\right), \\
            &= m_{\yx}(\yx)
            \exp\left[ - k \sum_{i=1}^{n} \frac{\yx_i}{k} \ln \left(\frac{\yx_i}{k} \frac{\tex}{\ex_i}\right)  \right], \\
            &= m_{\yx}(\yx)
            \exp\left[ - k \kl{\frac{\yx}{k}}{\frac{\ex}{\tex}} \right].
        \end{split}
    \end{align}
    Since $m_{\yx}(\yx) \in [0,1]$ is a probability, we have the bound $m_{\yx}(\yx) \leq 1$ and therefore we obtain the claimed result:
    \begin{equation}
        \SW{\ex}(\yx) \leq \polyk{\binom{n}{2}} m_{\ex}(\yx) \leq \polyk{\binom{n}{2}}\exp\left[ - k \kl{\frac{\yx}{k}}{\frac{\ex}{\tex}} \right].
    \end{equation}
\end{proof}
In summary, the eigenvalue estimation result captures the fact that, like the multinomial distribution $m_x(\yx)$, the Schur-Weyl distribution $\SW{\ex}(\yx)$ for large $k$ is sharply concentrated around Young diagrams $\yx$ whose distribution, $p^{\yx} = \yx/\syx$, is similar to the eigenvalue distribution $p^{\ex} = \ex/\tex$.

The following proposition compares Schur-Weyl measures $\SW{\ex}$ and $\SW{\ey}$ for different $\ex, \ey \in \eigsp{n}$.
\begin{prop}[Eigenvalue separation]
    \label{prop:eig_sep}
    Let $\ex, \ey \in \mathbb R_n^{+}$ satisfy $\tau = \tex = \tey$.
    For any non-negative integer $k\geq 1$,
    \begin{equation}
        \label{eq:eig_sep}
        \BC(\SW{\ex}, \SW{\ey}) \leq \polyk{\binom{n}{2}} \BC(p^\ex,p^\ey)^{k}.
    \end{equation}
\end{prop}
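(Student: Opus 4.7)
The plan is to combine the multinomial upper bound on the Schur-Weyl distribution from \cref{prop:multinomial_bound} (together with the dimension bound \cref{eq:unitary_irrep_dim_bound}) with the multinomial theorem. Concretely, for each partition $\yx \vdash_n k$ the bound $\SW{\ex}(\yx) \leq \polyk{\binom{n}{2}} m_\ex(\yx)$ (and its analog for $\ey$) yields, by taking geometric means,
\begin{equation*}
    \sqrt{\SW{\ex}(\yx)\,\SW{\ey}(\yx)} \leq \polyk{\binom{n}{2}} \sqrt{m_\ex(\yx)\,m_\ey(\yx)}.
\end{equation*}
Summing over partitions in the definition of $\BC(\SW{\ex}, \SW{\ey})$ therefore reduces the claim to bounding $\sum_{\yx \vdash_n k} \sqrt{m_\ex(\yx)\, m_\ey(\yx)}$ by $\BC(p^\ex, p^\ey)^k$.

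Next I would exploit the explicit form of the multinomial. A direct computation gives
\begin{equation*}
    \sqrt{m_\ex(\yx)\, m_\ey(\yx)} = \frac{k!}{\tau^k\, \yx_1!\cdots \yx_n!}\prod_{i=1}^n \bigl(\sqrt{\ex_i \ey_i}\bigr)^{\yx_i}.
\end{equation*}
Because partitions $\yx \vdash_n k$ form a subset of the ordered non-negative integer tuples $(\yx_1,\ldots,\yx_n)$ with $\sum_i \yx_i = k$, and all summands are non-negative, I can enlarge the range of summation. The multinomial theorem then gives
\begin{equation*}
    \sum_{\yx \vdash_n k} \sqrt{m_\ex(\yx)\, m_\ey(\yx)}
    \leq \frac{1}{\tau^k}\bigl(\sqrt{\ex_1\ey_1}+\cdots+\sqrt{\ex_n\ey_n}\bigr)^k.
\end{equation*}
Using $\tau = \tex = \tey$, this right-hand side is exactly $\BC(p^\ex, p^\ey)^k$, which completes the argument.

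The proof is essentially a calculation and I do not anticipate a genuine obstacle; the only delicate point is the relaxation from the sum over partitions $\yx \vdash_n k$ (over which $\SW{\ex}$ is actually supported) to the sum over all ordered tuples of non-negative integers of total weight $k$, which is what the multinomial theorem collapses. This step is valid because every summand is non-negative and we are deriving an upper bound, so no information is lost for the inequality \cref{eq:eig_sep}.
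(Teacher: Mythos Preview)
Your proposal is correct and follows essentially the same route as the paper: bound $\SW{\ex}(\yx)$ by $\polyk{\binom{n}{2}} m_\ex(\yx)$, take geometric means, and reduce to the Bhattacharyya coefficient between multinomial distributions. The paper then invokes the multiplicativity $\BC_k(m_\ex,m_\ey)=\BC(p^\ex,p^\ey)^k$ in one line, whereas you spell this out via the multinomial theorem (and make explicit the harmless enlargement of the sum from partitions $\yx\vdash_n k$ to all non-negative compositions of $k$); this is the same step, just unpacked.
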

\begin{proof}
    From \cref{prop:multinomial_bound} and \cref{eq:unitary_irrep_dim_bound}, we have
    \begin{align}
        \begin{split}
            \BC(\SW{\ex},\SW{\ey})
            &= \sum_{\yx \vdash k} \sqrt{\SW{\ex}(\yx)\SW{\ey}(\yx)}, \\
            &\leq \sum_{\yx \vdash k} \dim V_{\yx} \sqrt{m_\ex(\yx)m_\ey(\yx)}, \\
            &\leq \polyk{\binom{n}{2}} \sum_{\yx \vdash k}\sqrt{m_\ex(\yx)m_\ey(\yx)}, \\
            &= \polyk{\binom{n}{2}} \BC_k(m_\ex,m_\ey).
        \end{split}
    \end{align}
    where $\BC_k(m_\ex,m_\ey)$ is the Bhattacharyya coefficient between the multinomial distributions $m_{\ex}$ and $m_{\ey}$.
    Since Bhattacharyya coefficients are multiplicative with respect to multinomial distributions, i.e.,
    \begin{equation}
        \BC_k(m_x, m_y) = \BC\left(p^{\ex}, p^{\ey}\right)^{k},
    \end{equation}
    we conclude \cref{eq:eig_sep}.
\end{proof}
In subsequent sections we will frequently use a \textit{weaker} approximation which holds for all Young diagrams $\yx$ of size $k$:
\begin{equation}
    \sqrt{\SW{\ex}(\yx)\SW{\ey}(\yx)} \leq \polyk{\binom{n}{2}} \BC(p^{\ex},p^{\ey})^{k},
\end{equation}
which follows from \cref{prop:eig_sep} because $\sqrt{\SW{\ex}(\yx)\SW{\ey}(\yx)} \leq \BC(\SW{\ex}, \SW{\ey})$ by definition.
Restating this bound in terms of $\varphi_{\ya}(\ex) = \frac{\tex^{k}}{k!}\SW{\ex}(\yx)$, we observe
\begin{equation}
    \label{eq:weak_eig_sep_varphi}
    \sqrt{\vp_{\ex}(\yx)\vp_{\ey}(\yx)} \leq \polyk{\binom{n}{2}} \frac{\BC(\ex,\ey)^{\syx}}{\syx!},
\end{equation}
where on the right-hand side we use the unnormalized Bhattacharyya coefficient for notational convenience.

\subsubsection{Measures on conjugacy classes}
For $x \in \eigsp{n}$, we define the conjugacy class $\orbit_x$ of the diagonal matrix
$\diag(x_1, \ldots, x_n)$ under the action of the unitary group $U(n)$:
\begin{equation}
    \orbit_x = \{ X = U\diag(x_1, \ldots, x_n)U^{-1} \mid U \in U(n)\} \subset \hermp{n}.
\end{equation}
Each conjugacy class carries a unique $U(n)$-invariant probability measure $\mu_x$.
This measure can be constructed as the push-forward of the Haar probability measure on $U(n)$.
We write $X \sim \orbit_x$ to indicate $X$ distributed according to the measure $\mu_x$.

\begin{prop}
    \label{prop:random_matrices}
    We have,
    \begin{align}
        \begin{split}
            \tightsinglesubstack{\expect}{X \sim \orbit_{\ex}}\, \Phi_\yx(X)
            &= \frac{\vp_\yx(x)}{\dim V_\yx}  \id_{V_\yx}, \\
            \tightdoublesubstack{\expect}{X \sim \orbit_{\ex}}{Y \sim \orbit_{\ey}}\,  \frac{\vp^{\ya \yb}_\yc(X, Y)}{\dim V_\yc}
            &= c_{\ya \yb}^\yc \frac{\vp_\ya(x)}{\dim V_\ya}\frac{\vp_\yb(y)}{\dim V_\yb}, \\
            \tighttriplesubstack{\expect}{X \sim \orbit_{\ex}}{Y \sim \orbit_{\ey}}{Z \sim \orbit_{\ez}}\, \frac{\vp^{\ya \yb \yd}_{\yc \ye \yf}(X, Y, Z)}{\dim V_\ye}
            &= \norm{\abcdefsymbol}_2^2 \frac{\vp_\ya(x)}{\dim V_\ya}\frac{\vp_\yb(y)}{\dim V_\yb}
            \frac{\vp_\yd(y)}{\dim V_\yd}.
        \end{split}
    \end{align}
\end{prop}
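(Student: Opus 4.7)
The plan is to prove all three identities by the same two-step recipe: first use the $U(n)$-invariance of the measures $\mu_\ex, \mu_\ey, \mu_\ez$ together with Schur's lemma to evaluate the operator-valued expectations $\expect \Phi_\yx(X)$ and its tensor products, and then contract the resulting scalar identity against the relevant projector to obtain the stated expression.

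For the first identity, observe that for $X = U \diag(\ex)U^{-1}$ we have $\Phi_\yx(X) = \pi_\yx(U) \Phi_\yx(\diag \ex) \pi_\yx(U)^{-1}/H_\yx$, so the $U(n)$-invariance of the Haar push-forward $\mu_\ex$ implies that $M \coloneqq \expect_{X\sim \orbit_\ex} \Phi_\yx(X)$ commutes with every $\pi_\yx(g)$ for $g \in U(n)$. Since $V_\yx$ is irreducible, Schur's lemma gives $M = c\,\id_{V_\yx}$ for some scalar $c$. Taking the trace and using that $\vp_\yx$ is a class function, $c \dim V_\yx = \Tr_{V_\yx} M = \expect \vp_\yx(X) = \vp_\yx(\ex)$, yielding claim (1).

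For the second identity, I expand the definition $\vp^{\ya\yb}_\yc(X,Y) = \Tr_{V_\ya\otimes V_\yb}[(\Phi_\ya(X)\otimes \Phi_\yb(Y)) \Pi_{\ya\yb}^\yc]$, push the expectation inside the trace and use independence of $X,Y$ together with claim (1) to get
\begin{equation*}
    \expect\, (\Phi_\ya(X)\otimes \Phi_\yb(Y)) = \frac{\vp_\ya(\ex)\vp_\yb(\ey)}{\dim V_\ya \dim V_\yb}\, \id_{V_\ya \otimes V_\yb}.
\end{equation*}
Then the trace of the projector $\Pi^\yc_{\ya\yb}$ onto the isotypical component $C^\yc_{\ya\yb}\otimes V_\yc$ equals $c^\yc_{\ya\yb} \dim V_\yc$, and dividing by $\dim V_\yc$ gives claim (2). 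The argument for claim (3) begins identically: linearity, independence and claim (1) collapse the triple expectation to
\begin{equation*}
    \expect\, \vp^{\ya\yb\yd}_{\yc\ye\yf}(X,Y,Z) = \frac{\vp_\ya(\ex)\vp_\yb(\ey)\vp_\yd(\ez)}{\dim V_\ya \dim V_\yb \dim V_\yd}\; \Tr_{V_\ya\otimes V_\yb\otimes V_\yd}\!\bigl[ \projr \projl \bigr].
\end{equation*}
It therefore remains to identify this trace with $\norm{\abcdefsymbol}_2^2 \dim V_\ye$.

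The delicate point, and the main obstacle, is computing $\Tr(\projr \projl)$. Since $\projr$ and $\projl$ are orthogonal projections on a Hilbert space, the Hilbert--Schmidt identity $\Tr(\projr \projl) = \Tr(\projl \projr \projl) = \norm{\projr \projl}_2^2$ converts the trace into a squared $2$-norm. Now invoking the defining relation \eqref{eq:prod_proj_6j}, namely $\projl \projr \simeq \abcdefsymbol \otimes \id_{V_\ye}$ up to isometries, the norm is preserved, so
\begin{equation*}
    \norm{\projr \projl}_2^2 = \norm{\abcdefsymbol \otimes \id_{V_\ye}}_2^2 = \norm{\abcdefsymbol}_2^2 \cdot \dim V_\ye.
\end{equation*}
Substituting this back and dividing by $\dim V_\ye$ yields claim (3). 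The care required lies in verifying that the ``up to isometries'' identification genuinely preserves the Hilbert--Schmidt norm — this amounts to checking that $\projr\projl$, viewed as a map between the two multiplicity-carrying subspaces $(C^\yc_{\ya\yb}\otimes C^\ye_{\yc\yd})\otimes V_\ye$ and $(C^\ye_{\ya\yf}\otimes C^\yf_{\yb\yd})\otimes V_\ye$ of $V_\ya\otimes V_\yb\otimes V_\yd$, has its nonzero singular values coincide with those of $\abcdefsymbol \otimes \id_{V_\ye}$, which follows directly from the orthogonality of the Schur-Weyl-type decompositions \eqref{eq:triple_tensor_product}.
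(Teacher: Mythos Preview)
Your proof is correct and follows essentially the same approach as the paper's: Schur's lemma for the first identity, then linearity/independence to reduce the second and third identities to computing $\Tr \Pi_{\ya\yb}^\yc$ and $\Tr(\projr\projl)$ respectively. The paper is terser on the final step, simply asserting $\Tr(\projr\projl) = \norm{\abcdefsymbol}_2^2 \dim V_\ye$, whereas you supply the Hilbert--Schmidt justification; this is a welcome elaboration but not a different method.
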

\begin{proof}
    For the first equality, by Schur's Lemma we have
    \begin{equation}
        \tightsinglesubstack{\expect}{X \sim \orbit_{\ex}}\, \Phi_\yx(X) = \int_{\orbit_x} \Phi_\yx(X) \diff\mu_x = \vp_\yx(x) \frac{\id_{V_\yx}}{\dim V_\yx},
    \end{equation}
    where the proportionality is fixed by taking traces and $\vp_\yx(x)  = \Tr[\Phi_\yx(X)]$ by definition.
    For the second equality, we compute
    \begin{align}
        \begin{split}
            \tightdoublesubstack{\expect}{X \sim \orbit_{\ex}}{Y \sim \orbit_{\ey}}\, \frac{\vp^{\ya \yb}_\yc(X, Y)}{\dim V_\yc}
            &= \frac{1}{\dim V_\yc} \tightdoublesubstack{\expect}{X \sim \orbit_{\ex}}{Y \sim \orbit_{\ey}}\, \Tr (\Phi_\ya(X) \ot \Phi_\yb(Y)) \Pi_{\ya \yb}^\yc, \\
            &= \frac{\vp_\ya(x)}{\dim V_\ya}\frac{\vp_\yb(y)}{\dim V_\yb} \frac{1}{\dim V_\yc} \Tr \Pi_{\ya \yb}^\yc, \\
            &= c_{\ya \yb}^\yc \frac{\vp_\ya(x)}{\dim V_\ya}\frac{\vp_\yb(y)}{\dim V_\yb}.
        \end{split}
    \end{align}
    Finally, for the third equality we compute
    \begin{align}
        \begin{split}
            &\tighttriplesubstack{\expect}{X \sim \orbit_{\ex}}{Y \sim \orbit_{\ey}}{Z \sim \orbit_{\ez}}\, \frac{\vp^{\ya \yb \yd}_{\yc \ye \yf}(X, Y, Z)}{\dim V_\ye}\\
            &\qquad=
            \frac{1}{{\dim V_\ye}} \tighttriplesubstack{\expect}{X \sim \orbit_{\ex}}{Y \sim \orbit_{\ey}}{Z \sim \orbit_{\ez}}\,
            \Tr \projr (\Phi_\ya(X) \ot \Phi_\yb(Y) \ot \Phi_\yd(Z))\projl, \\
            &\qquad=
            \frac{1}{{\dim V_\ye}}  \frac{\vp_\ya(x)}{\dim V_\ya}\frac{\vp_\yb(y)}{\dim V_\yb}
            \frac{\vp_\yd(y)}{\dim V_\yd} \Tr \projr \projl, \\
            &\qquad=
            \norm{\abcdefsymbol}_2^2 \frac{\vp_\ya(x)}{\dim V_\ya}\frac{\vp_\yb(y)}{\dim V_\yb}
            \frac{\vp_\yd(y)}{\dim V_\yd}.
        \end{split}
    \end{align}
\end{proof}
Note that even though functions $\vp^{\ya \yb \yd}_{\yc \ye \yf}(X, Y, Z)$ are complex values, their expectation values are real and positive.
We will not use \cref{prop:random_matrices} in subsequent proofs directly; rather we will use the following consequence of \cref{prop:random_matrices} which enables us to place lower bounds on $\abs{\vp^{\ya\yb\yd}_{\yc\ye\yf}(X,Y,Z)}$ which complement the upper bounds derived earlier.
\begin{lem}
    \label{lem:estimate_E_x_y_z_bound}
    For all $(\ya,\yb,\yc,\yd,\ye,\yf) \in (\undomweightsp{n})^{\times 6}$ and all $(x,y,z) \in \mathbb R_n^{+}$,
    \begin{equation}
        \norm{\abcdefsymbol}_{2}^{2} \vp_{\ya}(\ex)\vp_{\yb}(\ey)\vp_{\yd}(\ez)
        \leq \polyk{3\binom{n}{2}}
        \tighttriplesubstack{\mathrm{max}}{X \in \orbit_{\ex}}{Y \in \orbit_{\ey}}{Z \in \orbit_{\ez}}
        \abs{\vp^{\ya\yb\yd}_{\yc\ye\yf}(X,Y,Z)}.
    \end{equation}
\end{lem}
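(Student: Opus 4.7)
The plan is to derive this bound as a direct consequence of the third expectation identity in \cref{prop:random_matrices}. Rearranging that identity gives
\begin{equation*}
    \norm{\abcdefsymbol}_{2}^{2} \vp_{\ya}(\ex)\vp_{\yb}(\ey)\vp_{\yd}(\ez)
    = \frac{\dim V_\ya \dim V_\yb \dim V_\yd}{\dim V_\ye}
    \tighttriplesubstack{\expect}{X \sim \orbit_{\ex}}{Y \sim \orbit_{\ey}}{Z \sim \orbit_{\ez}}
    \vp^{\ya\yb\yd}_{\yc\ye\yf}(X,Y,Z).
\end{equation*}
The left-hand side is a product of the non-negative real quantities $\vp_\ya(\ex), \vp_\yb(\ey), \vp_\yd(\ez)$ (for $\ex,\ey,\ez \in \eigsp{n}$) and the non-negative quantity $\norm{\abcdefsymbol}_{2}^{2}$. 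Hence, even though the integrand $\vp^{\ya\yb\yd}_{\yc\ye\yf}(X,Y,Z)$ is complex valued in general, its expectation over the triple of conjugacy classes is real and non-negative.

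With the expectation being non-negative, we bound it by $\abs{\cdot}$ and push the absolute value inside the integral via the triangle inequality, then bound the integral by the pointwise maximum over $\orbit_\ex \times \orbit_\ey \times \orbit_\ez$:
\begin{equation*}
    \tighttriplesubstack{\expect}{X \sim \orbit_{\ex}}{Y \sim \orbit_{\ey}}{Z \sim \orbit_{\ez}}
    \vp^{\ya\yb\yd}_{\yc\ye\yf}(X,Y,Z)
    \leq
    \tighttriplesubstack{\mathrm{max}}{X \in \orbit_{\ex}}{Y \in \orbit_{\ey}}{Z \in \orbit_{\ez}}
    \abs{\vp^{\ya\yb\yd}_{\yc\ye\yf}(X,Y,Z)}.
\end{equation*}

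It remains to control the combinatorial prefactor $\dim V_\ya \dim V_\yb \dim V_\yd / \dim V_\ye$. Here I use $\dim V_\ye \geq 1$ and the uniform Weyl dimension bound $\dim V_\yx \leq (\syx+1)^{\binom{n}{2}}$ from \cref{eq:unitary_irrep_dim_bound}. Taking $k$ to be the overall size parameter (so that $\sya, \syb, \syd \leq k$, which holds in all cases of interest since the size conditions \cref{eq:size_conditions_tet} force $\sye = \sya + \syb + \syd$), we get
\begin{equation*}
    \frac{\dim V_\ya \dim V_\yb \dim V_\yd}{\dim V_\ye}
    \leq (\sya+1)^{\binom{n}{2}}(\syb+1)^{\binom{n}{2}}(\syd+1)^{\binom{n}{2}}
    \leq \polyk{3\binom{n}{2}}.
\end{equation*}
Combining the three displayed estimates yields the claimed inequality.

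There is no real obstacle here: the content of the lemma is essentially the Schur-orthogonality identity of \cref{prop:random_matrices} combined with an elementary absolute-value/dimension estimate. The only mildly subtle point is the sign remark — justifying that $\expect \vp^{\ya\yb\yd}_{\yc\ye\yf}$ equals its absolute value — which follows immediately from non-negativity of the left-hand side of \cref{prop:random_matrices}. The dimension estimate is crude but suffices because the ambient dimension grows only polynomially in $k$.
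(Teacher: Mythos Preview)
Your proof is correct and essentially identical to the paper's: both rearrange the third identity of \cref{prop:random_matrices}, bound the (automatically non-negative) expectation by the pointwise maximum of the absolute value, and then use $\dim V_\ye \geq 1$ together with the Weyl dimension bound \cref{eq:unitary_irrep_dim_bound} on $\dim V_\ya, \dim V_\yb, \dim V_\yd$. Your explicit remark on why the expectation is real and non-negative is a nice clarification that the paper leaves implicit.
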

\begin{proof}
    By \cref{prop:random_matrices}, we have
    \begin{align}
        \norm{\abcdefsymbol}_{2}^{2} \vp_{\ya}(\ex)\vp_{\yb}(\ey)\vp_{\yd}(\ez)
        &=
        \frac{d_\ya d_\yb d_\yd}{d_\ye}
        \tighttriplesubstack{\expect}{X \sim \orbit_{\ex}}{Y \sim \orbit_{\ey}}{Z \sim \orbit_{\ez}}\,
        \vp^{\ya\yb\yd}_{\yc\ye\yf}(X,Y,Z),
    \end{align}
    where $d_{\yx} \coloneqq \dim V_{\yx}$.
    The claim then follows from the bound
    \begin{align}
        \tighttriplesubstack{\expect}{X \sim \orbit_{\ex}}{Y \sim \orbit_{\ey}}{Z \sim \orbit_{\ez}}\,
        \vp^{\ya\yb\yd}_{\yc\ye\yf}(X,Y,Z)
        &\leq
        \tighttriplesubstack{\mathrm{max}}{X \in \orbit_{\ex}}{Y \in \orbit_{\ey}}{Z \in \orbit_{\ez}}
        \abs{\vp^{\ya\yb\yd}_{\yc\ye\yf}(X,Y,Z)},
    \end{align}
    along with the dimension estimates $\dim (V_\yx) \leq (\syx + 1)^{\binom{n}{2}} \leq \polyk{\binom{n}{2}}$ for $\yx \in \{\ya,\yb,\yd\}$ from \cref{eq:unitary_irrep_dim_bound} and $\dim (V_{\ye}) \geq 1$.
\end{proof}

\section{Inequalities for tetrahedral eigenvalues}
\label{sec:proof_tet_iff}

Here we prove the main result of this paper, \cref{thm:tet_iff}.
For convenience, we recall \cref{thm:tet_iff} from the introduction.
\tetiffthm*

\begin{proof}
    Our proof of \cref{thm:tet_iff} is divided into two parts.
    First we prove the necessity of \cref{eq:tet_iff} in \cref{sec:tet_necess}.
    Then afterwards we prove the distance bound in \cref{eq:tet_dist_bound}.
    Providing proofs for these two claims yields the whole theorem because the satisfaction of \cref{eq:tet_iff} for all $(\ya,\yb,\yd)$ with $\sya + \syb + \syd = k$ implies \cref{eq:tet_dist_bound} holds for all $k$, and thus the $k \to \infty$ limit implies $D\!\tightparensixj{\ea & \eb & \ec \\ \ed & \ee & \ef}=0$ (because $\lim_{k \to \infty} \ln \polyk{}/k =0$) which means $(\ea, \eb, \ec, \ed, \ee, \ef) \in \Tetra(n)$.
\end{proof}

\subsection{Necessary conditions}
\label{sec:tet_necess}

First, we assume that there exists matrices $(A,B,C,D,E,F)$ with eigenvalues
$(\ea,\eb,\ec,\ed,\ee,\ef)$ satisfying the tetrahedral constraints $A+B=C$, $B+D=F$, $E=A+F$, and $E=C+D$.
The result of \cref{prop:varphi_tet_main_ineq} applied to $X \mapsto A, Y \mapsto B$, and $Z \mapsto D$ produces
\begin{align}
    \begin{split}
        \left|\vp^{\ya\yb\yd}_{\yc\ye\yf}(A,B,D)\right|
        &\leq \norm{\abcdefsymbol}_{\infty} \vp^{\ya\yb\yd}_{\_\ye\yf}(A,B,D)^{\frac{1}{2}} \vp^{\ya\yb\yd}_{\yc\ye\_}(A,B,D)^{\frac{1}{2}}.
    \end{split}
\end{align}
Then four applications of \cref{prop:tet_psum_ineq} produces
\begin{align}
    \begin{split}
        \vp^{\ya\yb\yd}_{\_\ye\yf}(A, B, D) &\leq \left(\vp_\ya(A) \vp_\yf(F)\right)^{\frac{2}{3}} \vp_\ye(E)^{\frac{1}{3}}, \\
        \vp^{\ya\yb\yd}_{\yc\ye\_}(A,B,D) &\leq \left( \vp_\yc(C) \vp_\yd(D)\right)^{\frac{2}{3}} \vp_\ye(E)^{\frac{1}{3}}.
    \end{split}
\end{align}
These bounds combined yields:
\begin{equation}
    \label{eq:vp_ABD_abs_bound}
    \left|\vp^{\ya\yb\yd}_{\yc\ye\yf}(A,B,D)\right|
    \leq \norm{\abcdefsymbol}_{\infty}
    \left( \vp_{\ya}(A) \vp_{\yd}(D) \vp_{\yf}(F)\vp_{\yc}(C)\vp_{\ye}(E)\right)^{\frac{1}{3}}.
\end{equation}
We then compute,
\begin{align}
    \begin{split}
        \vp_{\ya}(\ea)\vp_{\yb}(\eb)\vp_{\yd}(\ed)
        &= \sum_{\yc,\ye,\yf}\vp^{\ya\yb\yd}_{\yc\ye\yf}(A,B,D), \\
        &\leq \sum_{\yc,\ye,\yf} \left|\vp^{\ya\yb\yd}_{\yc\ye\yf}(A,B,D)\right|, \\
        &= \sum_{\yc,\ye,\yf} \norm{\abcdefsymbol}_{\infty}
        \left( \vp_{\ya}(\ea) \vp_{\yd}(\ed) \vp_{\yf}(\ef)\vp_{\yc}(\ec)\vp_{\ye}(\ee)\right)^{\frac{1}{3}},
    \end{split}
\end{align}
as desired.

\subsection{Distance bound \& sufficiency}

Let $(a,b,c,d,e,f) \in (\eigsp{n})^6$ which satisfy the trace conditions and also \cref{eq:tet_iff}:
\begin{equation}
    \label{eq:abd_multiplied}
    \vp_{\ya}(\ea)\vp_{\yb}(\eb)\vp_{\yd}(\ed)
    \leq \sum_{\yc,\ye,\yf} \norm{\abcdefsymbol}_{\infty}
    \left(
        \vp_{\ya}(\ea)\vp_{\yf}(\ef)
        \vp_{\yc}(\ec)\vp_{\yd}(\ed)
        \vp_{\ye}(\ee)
    \right)^{\frac{1}{3}}
\end{equation}
for all $(\ya, \yb, \yd) \in (\undomweightsp{n})^3$.
We will consider triples with $\sya + \syb + \syc = k$ for some fixed integer $k$.
Denote by $(\yc^*, \ye^*, \yf^*)$ the triple for which the summand on the right hand side is maximal.
We have,
\begin{equation}
    \label{eq:first_abd_bound}
    \vp_{\ya}(\ea)\vp_{\yb}(\eb)\vp_{\yd}(\ed)
    \leq \polyk{3n} \norm{\abcdefsymbolcefstarred}_{\infty}
    \left(
        \vp_{\ya}(\ea)\vp_{\yf^*}(\ef)
        \vp_{\yc^*}(\ec)\vp_{\yd}(\ed)
        \vp_{\ye^*}(\ee)
    \right)^{\frac{1}{3}}
    ,
\end{equation}
where $\polyk{3n}$ serves as an upper bound on the number of terms in the sum in \cref{eq:abd_multiplied}.
\Cref{eq:first_abd_bound} then implies strict positivity
\begin{equation}
    \vp_{\ya}(\ea)\vp_{\yb}(\eb)\vp_{\yd}(\ed) > 0 \implies \norm{\abcdefsymbolcefstarred}_{\infty} >0,
\end{equation}
which will enable us to divide by $\norm{\abcdefsymbolcefstarred}$ momentarily.

At the same time, by \cref{lem:estimate_E_x_y_z_bound}, we have
\begin{align}
    \norm{\abcdefsymbolcefstarred}_{2}^{2} \vp_{\ya}(\ea)\vp_{\yb}(\eb)\vp_{\yd}(\ed)
    &\leq
    \polyk{3\binom{n}{2}}
    \abs{\vp^{\ya\yb\yd}_{\yc^*\ye^*\yf^*}(A^*,B^*,D^*)}
\end{align}
where $(A^*, B^*, D^*) \in \orbit_{a} \times \orbit_{b} \times \orbit_{d}$ is a triple of matrices which realizes a maximum of $\abs{\vp^{\ya\yb\yd}_{\yc^*\ye^*\yf^*}(A, B, D)}$.
Then, applying \cref{eq:vp_ABD_abs_bound} here yields
\begin{align}\label{eq:second_abd_bound}
    \begin{split}
        &\norm{\abcdefsymbolcefstarred}_{2}^{2} \vp_{\ya}(\ea)\vp_{\yb}(\eb)\vp_{\yd}(\ed)\\
        &\quad\leq
        \polyk{3\binom{n}{2}}
        \norm{\abcdefsymbolcefstarred}_{\infty}
        \left(\vp_{\ya}(A^*)\vp_{\yf^*}(F^*)\vp_{\yc^*}(C^*)\vp_{\yd}(D^{*})\vp_{\ye^*}(E^*)\right)^{\frac{1}{3}}.
    \end{split}
\end{align}
where $C^*\coloneqq A^*+B^*, E^*\coloneqq A^*+B^*+D^*, F^*\coloneqq B^*+D^*$.

Now we (i) multiply the estimate in \cref{eq:first_abd_bound} by the estimate in \cref{eq:second_abd_bound}, (ii) divide by $\norm{\abcdefsymbol}_{2}^{2}$ and use the standard operator norm estimate $\norm{\abcdefsymbol}_{2}^{2} \leq \norm{\abcdefsymbol}_{\infty}^{2}$, and (iii) take a square root to obtain:
\begin{align}\label{eq:varphi_sixth_root}
    \begin{split}
        \vp_{\ya}(\ea)\vp_{\yb}(\eb)\vp_{\yd}(\ed)
        &\leq \polyk{\frac{3}{4}n(n+1)}
        \left(
        \vp_{\ya}(\ea)\vp_{\yf^*}(\ef)
        \vp_{\yc^*}(\ec)\vp_{\yd}(\ed)
        \vp_{\ye^*}(\ee)
        \right)^{\frac{1}{6}} \\
        &\qquad\times
        \left(\vp_{\ya}(A^*)\vp_{\yf^*}(F^*)\vp_{\yc^*}(C^*)\vp_{\yd}(D^*)\vp_{\ye^*}(E^*)\right)^{\frac{1}{6}}.
    \end{split}
\end{align}
From here, we can apply \cref{prop:eig_sep}, or rather, the estimate from \cref{eq:weak_eig_sep_varphi} to relax the above estimate even further.
For instance, we have by \cref{eq:weak_eig_sep_varphi} and $\syf \leq k$ the estimate
\begin{align}
    \begin{split}
        \left(\vp_{\yf^*}(\ef) \vp_{\yf^*}(F^*)\right)^{\frac{1}{2}}
        &\leq (\syf+1)^{\binom{n}{2}} \frac{1}{\syf!} \BC(\ef, \eig(F^*))^{\syf}, \\
        &\leq \polyk{\binom{n}{2}} \frac{1}{\syf!} \BC(\ef, \eig(F^*))^{\syf}.
    \end{split}
\end{align}
Also, as $\eig(A^*)=a$, we have $\vp_{\ya}(A^*)=\vp_\ya(\ea)$, and thus by \cref{eq:varphi_normalization}
\begin{equation}
    \left(\vp_\ya(\ea) \vp_{\ya}(A^*)\right)^{\frac{1}{2}} =
    \vp_\ya(\ea) \leq \frac{\tea^{\sya}}{\sya!}.
\end{equation}
Putting these two estimates above together, we arrive at
\begin{align}
    \begin{split}
        &\left( \vp_\ya(\ea) \vp_{\ya}(A^*) \vp_{\yf^*}(\ef) \vp_{\yf^*}(F^*)\right)^{\frac{1}{2}} \\
        &\qquad\leq \frac{1}{\sya!\syf!} \polyk{\binom{n}{2}} \tea^{\sya} \BC(\ef, \eig(F^*))^{\syf}, \\
        &\qquad\leq \frac{1}{k!} \polyk{\binom{n}{2}} (\tea + \BC(\ef, \eig(F^*)))^{k}, \\
        &\qquad= \frac{1}{k!} \polyk{\binom{n}{2}} \BC(\ea \oplus \ef, \ea \oplus \eig(F^*))^k, \\
        &\qquad\leq \frac{\tee^k}{k!} \polyk{\binom{n}{2}} \exp\left( - \frac{k}{8\tee^2} \norm{\ef- \eig(F^*)}_1^2\right).
    \end{split}
\end{align}
Similarly, we obtain
\begin{align}
    \begin{split}
        &\left( \vp_\ya(\ed) \vp_{\ya}(D^*) \vp_{\yc^*}(\ec) \vp_{\yc^*}(C^*)\right)^{\frac{1}{2}}\\
        &\qquad\leq
        \frac{\tee^k}{k!} \polyk{\binom{n}{2}} \exp\left(-\frac{k}{8\tee^2} \norm{\ec - \eig(C^*)}_1^2\right),
    \end{split}
\end{align}
and
\begin{equation}
    \left( \vp_\ye(\ee) \vp_{\ye}(E^*) \right)^{\frac{1}{2}} \leq
    \frac{\tee^k}{k!} \polyk{\binom{n}{2}}
    \exp\left( - \frac{k}{8\tee^2} \norm{\ee- \eig(E^*)}_1^2\right).
\end{equation}
Applying these bounds to the inequality in \cref{eq:varphi_sixth_root} implies
\begin{equation}
    \vp_{\ya}(\ea) \vp_{\yb}(\eb) \vp_{\yd}(\ed) \leq
    \frac{\tee^k}{k!} \polyk{\frac{1}{4}n(5n+1)}
    \exp\left( - \frac{k}{24\tee^2} D\!\tightparensixj{\ea & \eb & \ec \\ \ed & \ee & \ef}^2 \right).
\end{equation}
Here we have used the fact that the distance measure satisfies
\begin{equation}
    D\!\tightparensixj{\ea & \eb & \ec \\ \ed & \ee & \ef}^{2}
    \leq
    \norm{\ec- \eig(C^*)}_1^2 + \norm{\ee- \eig(E^*)}_1^2 + \norm{\ef- \eig(F^*)}_1^2.
\end{equation}
Summing over the triples $(\ya, \yb, \yd)$ with $\sya+\syb+\syd=k$ yields
\begin{equation}
    1 \leq
    \polyk{\frac{1}{4}n(5n+13)}
    \exp\left( - \frac{k}{24\tee^2} D\!\tightparensixj{\ea & \eb & \ec \\ \ed & \ee & \ef}^2 \right),
\end{equation}
where we have estimated by $\polyk{3n}$ the number of summands.
Altogether, this implies%
\begin{equation}
    \label{eq:final_estimate}
    \frac{1}{\tee} D\!\tightparensixj{\ea & \eb & \ec \\ \ed & \ee & \ef} \leq 6 \sqrt{3} n \sqrt{\frac{\ln \polyk{} }{k}},
\end{equation}
because $\sqrt{24(\frac{1}{4}n(5n+13))} \leq 6 \sqrt{3} n$ for all $n \geq 1$.

\section{Asymptotics of \texorpdfstring{$U(n)$ $6j$}{U(n) 6j} symbols}
\label{sec:proof_6j_estimates}

In this section we provide a proof of \cref{thm:6j_estimates},
specifically, \cref{sec:6j_inverse_poly} provides the proof of the inverse polynomial lower bound while \cref{sec:6j_exp} provides the proof of the exponential upper bound.
Before proceeding, we prove the following lemma which will be used in both portions of the proof.
\begin{lem}
    \label{lem:asym_sixj_helper}
    Let $X,Y,Z \in \hermp{n}$ and let $(\ea,\eb,\ec,\ed,\ee,\ef)$ be the eigenvalues of the matrices $(X, Y, X+Y, Z, X+Y+Z,Y+Z)$.
    Then for all $(\ya,\yb,\yc,\yd,\ye,\yf) \in (\undomweightsp{n})^{\times 6}$ satisfying the size conditions of \cref{eq:size_conditions_tet} with $\abs{\ye} = k$, we have
    \begin{align}
        \label{eq:hex_rel_ent_bound}
        \frac{k!}{\tee^k}\abs{\vp^{\ya\yb\yd}_{\yc\ye\yf}(X,Y,Z)}
        &\leq \polyk{n(n-1)} \exp\left(- k R_{\ea\eb\ec\ed\ee\ef}^{\ya\yb\yc\yd\ye\yf}\right),
    \end{align}
    where
    \begin{align}
        \begin{split}
            R_{\ea\eb\ec\ed\ee\ef}^{\ya\yb\yc\yd\ye\yf}
            = \frac{1}{4}\bigg[
            \kl{\frac{\ya \oplus \yb \oplus \yd}{k}}{\frac{\ea\oplus \eb \oplus \ed}{\tee}}
            &+ \kl{\frac{\yc \oplus \yd}{k}}{\frac{\ec \oplus \ed}{\tee}}+ \\
            \cdots + \kl{\frac{\ya \oplus \yf}{k}}{\frac{\ea\oplus \ef}{\tee}}
            &+ \kl{\frac{\ye}{k}}{\frac{\ee}{\tee}}
            \bigg].
        \end{split}
    \end{align}
    Note that $R_{\ea\eb\ec\ed\ee\ef}^{\ya\yb\yc\yd\ye\yf} \geq 0$ with equality holding only if
    \begin{equation}
        \frac{1}{k} (\ya, \yb, \yc, \yd, \ye, \yf)
        =
        \frac{1}{\tee} (\ea, \eb, \ec, \ed, \ee, \ef).
    \end{equation}
\end{lem}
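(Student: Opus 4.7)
The plan combines the Cauchy--Schwarz bound of \cref{prop:varphi_tet_main_ineq} with a carefully chosen pairing of the estimates of \cref{prop:tet_psum_ineq}, and then applies the multinomial method-of-types bound underlying \cref{prop:eig_est}. First, I invoke \cref{prop:varphi_tet_main_ineq} to get
\[
\abs{\vp^{\ya\yb\yd}_{\yc\ye\yf}(X,Y,Z)} \leq \norm{\abcdefsymbol}_\infty \bigl(\vp^{\ya\yb\yd}_{\_\ye\yf}\bigr)^{1/2}\bigl(\vp^{\ya\yb\yd}_{\yc\ye\_}\bigr)^{1/2},
\]
and note $\norm{\abcdefsymbol}_\infty \leq 1$, since by \cref{eq:prod_proj_6j} the $6j$-symbol is isometrically realized as a block of the product $\projl\projr$ of two orthogonal projectors, hence a contraction. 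For each of the two remaining factors I then take the geometric mean of two well-chosen bounds from \cref{prop:tet_psum_ineq}:
\begin{align*}
\vp^{\ya\yb\yd}_{\_\ye\yf} &\leq \bigl[\vp_\ya(\ea)\vp_\yb(\eb)\vp_\yd(\ed)\cdot\vp_\ya(\ea)\vp_\yf(\ef)\bigr]^{1/2},\\
\vp^{\ya\yb\yd}_{\yc\ye\_} &\leq \bigl[\vp_\yc(\ec)\vp_\yd(\ed)\cdot\vp_\ye(\ee)\bigr]^{1/2}.
\end{align*}
This specific pairing is engineered so that the four factor-types $\vp_\ya\vp_\yb\vp_\yd$, $\vp_\ya\vp_\yf$, $\vp_\yc\vp_\yd$ and $\vp_\ye$ stand in bijection with the four KL divergences $D_1,\ldots,D_4$ summed in $R$.

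Second, distributing $\frac{k!}{\tee^k}=[(k!/\tee^k)^4]^{1/4}$ across the fourfold product reduces matters to bounding each of $\frac{k!}{\tee^k}\vp_\ya\vp_\yb\vp_\yd$, $\frac{k!}{\tee^k}\vp_\ya\vp_\yf$, $\frac{k!}{\tee^k}\vp_\yc\vp_\yd$, and $\frac{k!}{\tee^k}\vp_\ye$ separately. Applying the inequality $\vp_\yx(\ex)\leq \dim V_\yx\prod_i \ex_i^{\yx_i}/\yx_i!$ (which is essentially the content of \cref{prop:multinomial_bound}), each such normalized quantity factors as a product of dimension bounds times a genuine multinomial probability on $3n$, $2n$, $2n$, or $n$ slots respectively. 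The size conditions of \cref{eq:size_conditions_tet} ensure the Young-diagram weights sum to $k$ in each case, and the trace conditions $\tea+\teb+\ted=\tea+\tef=\tec+\ted=\tee$ ensure the corresponding parameter vectors sum to $1$ after normalization by $\tee$. The classical method-of-types bound $M(\yx)\leq \exp(-k\,\kl{\yx/k}{p})$ (which follows from the multinomial coefficient estimate $\binom{k}{\yx_1,\ldots,\yx_r}\leq \exp(kH(\yx/k))$) then produces the four estimates $\polyk{3\binom{n}{2}}e^{-kD_1}$, $\polyk{2\binom{n}{2}}e^{-kD_3}$, $\polyk{2\binom{n}{2}}e^{-kD_2}$, and $\polyk{\binom{n}{2}}e^{-kD_4}$, using the dimension bound $\dim V_\yx\leq \polyk{\binom{n}{2}}$ from \cref{eq:unitary_irrep_dim_bound}.

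Taking the fourth root of the product of these four estimates, the exponential factors combine to $\exp\!\bigl(-k(D_1+D_2+D_3+D_4)/4\bigr)=\exp(-kR)$ as required, while the polynomial prefactor collapses to $\polyk{(3+2+2+1)\binom{n}{2}/4}=\polyk{2\binom{n}{2}}=\polyk{n(n-1)}$, matching the lemma statement exactly. Non-negativity of $R$ and the stated equality condition are immediate consequences of Gibbs' inequality applied termwise. The main obstacle lies in the bookkeeping: identifying the correct pairing of bounds from \cref{prop:tet_psum_ineq}, so that all four KL divergences appear with equal weight $1/4$ and the polynomial prefactors total exactly $2\binom{n}{2}=n(n-1)$; alternative pairings would either double-count some divergences or produce a worse polynomial factor.
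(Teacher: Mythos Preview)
Your proposal is correct and follows essentially the same route as the paper's own proof: the paper likewise combines \cref{prop:varphi_tet_main_ineq} with $\norm{\abcdefsymbol}_\infty\leq 1$ and the same four bounds from \cref{prop:tet_psum_ineq} (your pairing produces exactly the paper's intermediate inequality $\abs{\vp^{\ya\yb\yd}_{\yc\ye\yf}}\leq(\vp_\ya\vp_\yb\vp_\yd\cdot\vp_\yc\vp_\yd\cdot\vp_\ye\cdot\vp_\ya\vp_\yf)^{1/4}$), and then applies the multinomial/KL bound of \cref{prop:multinomial_bound,prop:eig_est} to each of the four normalized factors. Your polynomial bookkeeping $\polyk{(3+2+2+1)\binom{n}{2}/4}=\polyk{n(n-1)}$ also matches the paper exactly.
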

\begin{proof}
    Using \cref{prop:varphi_tet_main_ineq}, \cref{prop:tet_psum_ineq} and $\norm{\abcdefsymbol}_{\infty} \leq 1$, we obtain
    \begin{equation}
        \label{eq:abs_quad_bound}
        \abs{\vp^{\ya\yb\yd}_{\yc\ye\yf}(X,Y,Z)}
        \leq
        \left(\vp_{\ya}(\ea)\vp_{\yb}(\eb)\vp_{\yd}(\ed) \times \vp_{\yc}(\ec)\vp_{\yd}(\ed) \times \vp_{\ye}(\ee) \times \vp_{\ya}(\ea)\vp_{\yf}(\ef)\right)^{\frac{1}{4}}.
    \end{equation}
    Using the multinomial distribution bound from \cref{prop:multinomial_bound}, we observe
    \begin{align}
        \begin{split}
            \vp_{\ya}(\ea)\vp_{\yb}(\eb)\vp_{\yd}(\ed)
            &\leq \polyk{3\binom{n}{2}}
            \frac{\ea_1^{\ya_1}}{\ya_1!}\cdots\frac{\ea_n^{\ya_n}}{\ya_n!}
            \frac{\eb_1^{\yb_1}}{\yb_1!}\cdots\frac{\eb_n^{\yb_n}}{\yb_n!}
            \frac{\ed_1^{\yd_1}}{\yd_1!}\cdots\frac{\ed_n^{\yd_n}}{\yd_n!}, \\
            &= \polyk{3\binom{n}{2}} \frac{\tee^{k}}{k!}
            m_{\ya \oplus \yb \oplus \yd}(\ea \oplus \eb \oplus \ed).
        \end{split}
    \end{align}
    Then applying the relative entropy bound from the proof of \cref{prop:eig_est} yields
    \begin{align}
        \frac{k!}{\tee^k}{\vp_{\ya}(\ea)\vp_{\yb}(\eb)\vp_{\yd}(\ed)}
        &\leq \polyk{3\binom{n}{2}}\exp\left[ - k \kl{\frac{\ya \oplus \yb \oplus \yd}{k}}{\frac{\ea\oplus \eb \oplus \ed}{\tee}}\right].
    \end{align}
    Analogously, we obtain
    \begin{align}
        \frac{k!}{\tee^k}{\vp_{\yc}(\ec)\vp_{\yd}(\ed)}
        &\leq\polyk{2\binom{n}{2}} \exp\left[ - k \kl{\frac{\yc \oplus \yd}{k}}{\frac{\ec \oplus \ed}{\tee}}\right], \\
        \frac{k!}{\tee^k}{\vp_{\ya}(\ea)\vp_{\yf}(\ef)}
        &\leq\polyk{2\binom{n}{2}}\exp\left[ - k \kl{\frac{\ya \oplus \yf}{k}}{\frac{\ea\oplus \ef}{\tee}}\right], \\
        \frac{k!}{\tee^k}{\vp_{\ye}(\ee)}
        &\leq\polyk{\binom{n}{2}}\exp\left[ - k \kl{\frac{\ye}{k}}{\frac{\ee}{\tee}}\right].
    \end{align}
    Applying these bounds to \cref{eq:abs_quad_bound} we obtain \cref{eq:hex_rel_ent_bound}.
\end{proof}

\subsection{Inverse polynomial lower bound}
\label{sec:6j_inverse_poly}

To begin we note that for all $(X,Y,Z) \in (\hermp{n})^{3}$ and $(\ya,\yb,\yc,\yd,\ye,\yf) \in (\undomweights{n})^{6}$, we have the following bound between $\vp^{\ya\yb\yd}_{\yc\ye\yf}(X,Y,Z)$ and $\abcdefsymbol$:
\begin{align}
    \label{eq:vp_tet_bound_sixj}
    \begin{split}
        \abs{\vp^{\ya\yb\yd}_{\yc\ye\yf}(X,Y,Z)}
        &\leq \norm{\abcdefsymbol}_{\infty} \vp^{\ya\yb\yd}_{\_\ye\yf}(X,Y,Z)^{\frac{1}{2}} \vp^{\ya\yb\yd}_{\yc\ye\_}(X,Y,Z)^{\frac{1}{2}}, \\
        &\leq \norm{\abcdefsymbol}_{\infty} \varphi_{\ye}(X+Y+Z), \\
        &\leq \norm{\abcdefsymbol}_{\infty} \frac{\Tr(X+Y+Z)^k}{k!},
    \end{split}
\end{align}
where the first inequality is \cref{prop:varphi_tet_main_ineq}, the second inequality is \cref{prop:tet_psum_ineq} and the third inequality follows from the normalization of $\varphi_{\ye}$ (\cref{eq:varphi_normalization}).

Next we assume $(\ea,\eb,\ec,\ed,\ee,\ef)$ are the eigenvalues of non-negative Hermitian matrices $(X,Y,X+Y,Z,X+Y+Z,Y+Z)$.
Summing $\vp^{\ya\yb\yd}_{\yc\ye\yf}(X,Y,Z)$ over all tuples of Young diagrams satisfying the size conditions in \cref{eq:size_conditions_tet} produces the normalization condition
\begin{equation}
    \label{eq:size_conditions_tet_normalization}
    \sum_{\substack{\ya,\yb,\yd \\ \yc,\ye,\yf \\ \sye = k}} \vp^{\ya\yb\yd}_{\yc\ye\yf}(X,Y,Z) = \frac{\tee^k}{k!}.
\end{equation}
Now for each positive integer $k$, let $(\ya_k,\yb_k,\yc_k,\yd_k,\ye_k,\yf_k) \in (\undomweightsp{n})^{6}$ denote the tuple which maximizes the absolute value of the summand $\vp^{\ya\yb\yd}_{\yc\ye\yf}(X,Y,Z)$ in \cref{eq:size_conditions_tet_normalization} above.
Using this maximum, we bound each summand to conclude
\begin{equation}
    \label{eq:inverse_poly_varphi_lower}
    \frac{\tee^k}{k!}
    = \sum_{\substack{\ya,\yb,\yd \\ \yc,\ye,\yf \\ \sye = k}} \vp^{\ya\yb\yd}_{\yc\ye\yf}(X,Y,Z)
    \leq \polyk{6\rank[\ee]} \abs{\vp^{\ya_k\yb_k\yd_k}_{\yc_k\ye_k\yf_k}(X,Y,Z)},
\end{equation}
where $\polyk{6\rank[\ee]}$ is an upper bound the total number of terms in the summation for which $\vp^{\ya\yb\yd}_{\yc\ye\yf}(X,Y,Z) \neq 0$.

Third we combine the lower bound in \cref{eq:inverse_poly_varphi_lower} with the upper bound in \cref{eq:vp_tet_bound_sixj} (noting that $\tee = \Tr(X+Y+Z)$) to obtain
\begin{equation}
    1 \leq \polyk{6\rank[\ee]} \norm{\tightcurlysixj{ \ya_k & \yb_k & \yc_k \\ \yd_k &  \ye_k &  \yf_k}}_{\infty},
\end{equation}
as stated in \cref{eq:n_independent_inverse_poly}.
Finally, to prove the convergence portion of the claim, we combine \cref{lem:asym_sixj_helper} with \cref{eq:inverse_poly_varphi_lower} to obtain
\begin{equation}
    \exp\left(- k R_{\ea\eb\ec\ed\ee\ef}^{\ya_k\yb_k\yc_k\yd_k\ye_k\yf_k}\right) \leq \frac{1}{\polyk{6\rank[\ee] + n(n-1)}},
\end{equation}
which implies $\lim_{k\to \infty} R_{\ea\eb\ec\ed\ee\ef}^{\ya_k\yb_k\yc_k\yd_k\ye_k\yf_k} = 0$ which implies the convergence claim of \cref{thm:6j_estimates}.

\subsection{Exponential upper bound}
\label{sec:6j_exp}

We now prove the exponential decay on $\norm{\abcdefsymbolk}_{\infty}$ whenever $(\ya_k,\yb_k,\yc_k,\yd_k,\ye_k,\yf_k)$ converges to eigenvalues which are not the eigenvalues of any tetrahedra of Hermitian matrices.
Without loss of generality, we can assume the Young diagrams $(\ya_k,\yb_k,\yc_k,\yd_k,\ye_k,\yf_k)$ satisfy the size constraints \cref{eq:size_conditions_tet} as before (since otherwise $\abcdefsymbolk = 0$).
To begin, \cref{lem:estimate_E_x_y_z_bound} gives
\begin{equation}
    \norm{\abcdefsymbolk}_{2}^{2} \vp_{\ya_k}(\ea)\vp_{\yb_k}(\eb)\vp_{\yd_k}(\ed)
    \leq
    \polyk{3\binom{n}{2}}
    \tighttriplesubstack{\mathrm{max}}{X \in \orbit_{\ea}}{Y \in \orbit_{\eb}}{Z \in \orbit_{\ed}}
    \abs{\vp^{\ya_k\yb_k\yd_k}_{\yc_k\ye_k\yf_k}(X,Y,Z)}.
\end{equation}
Now we freely choose, for each $k$, a particular triple of eigenvalues $(\ea^*,\eb^*,\ed^*)$ which ensures
\begin{equation}
    \frac{k!}{\Tr[\ee^{*}]^k} \vp_{\ya_k}(\ea^*)\vp_{\yb_k}(\eb^*)\vp_{\yd_k}(\ed^*) \geq \frac{1}{\polyk{3n}}.
\end{equation}
where we are denoting the sum $\Tr[\ea^{*}]+\Tr[\eb^*]+\Tr[\ed^*]$ with $\Tr[\ee^{*}]$ by a slight abuse of notation as no eigenvalues $\ee^*$ have been chosen here.
That such a choice can always be made follows from the normalization condition
\begin{equation}
    \sum_{\substack{\ya,\yb,\yd \\ \sya+\syb+\syd = k}}\vp_{\ya}(\ea)\vp_{\yb}(\eb)\vp_{\yd}(\ed) = \frac{(\tea+\teb+\ted)^k}{k!},
\end{equation}
and noting that there are at most $\polyk{3n}$ such triples $(\ya,\yb,\yd)$ of Young diagrams in the sum (indeed it suffices to choose $(\ea^*,\eb^*,\ed^*)$ to maximize $\vp_{\ya_k}(\ea)\vp_{\yb_k}(\eb)\vp_{\yd_k}(\ed)$).
Therefore, we obtain the upper bound
\begin{equation}
    \norm{\abcdefsymbolk}_{2}^{2}
    \leq
    \polyk{3\binom{n}{2}+3n} \frac{\Tr[\ee^{*}]^k}{k!}
    \tighttriplesubstack{\mathrm{max}}{X \in \orbit_{\ea^{*}}}{Y \in \orbit_{\eb^{*}}}{Z \in \orbit_{\ed^{*}}}
    \abs{\vp^{\ya_k\yb_k\yd_k}_{\yc_k\ye_k\yf_k}(X,Y,Z)}.
\end{equation}
Then applying \cref{lem:asym_sixj_helper} produces
\begin{equation}
    \norm{\abcdefsymbolk}_{2}^{2}
    \leq
    \polyk{3\binom{n}{2}+3n + n(n-1)}
    \exp\left( - k
        \tighttriplesubstack{\mathrm{min}}{X \in \orbit_{\ea^{*}}}{Y \in \orbit_{\eb^{*}}}{Z \in \orbit_{\ed^{*}}}
        R_{\ea^{*}\eb^{*}\ec^{*}\ed^{*}\ee^{*}\ef^{*}}^{\ya_k\yb_k\yc_k\yd_k\ye_k\yf_k}
    \right).
\end{equation}
where $\ec^{*} = \exp(X+Y), \ef^{*} = \eig(Y+Z)$, and $\ee^{*} = \eig(X+Y+Z)$.

Finally, since it was assumed that $\frac{1}{k}(\ya_k,\yb_k,\yc_k,\yd_k,\ye_k,\yf_k)$ are converging to a point outside $\Tetra^{+}(n)$, there must exist a sufficiently large $K$ such that for all $k \geq K$ and all $(\ea',\eb',\ec',\ed',\ee',\ef') \in \Tetra^{+}(n)$, we have a separation $R_{\ea'\eb'\ec'\ed'\ee'\ef'}^{\ya_k\yb_k\yc_k\yd_k\ye_k\yf_k} \geq r$ for some positive constant $r > 0$.
Since $(\ea^{*},\eb^{*},\ec^{*},\ed^{*},\ee^{*},\ef^{*}) \in \Tetra(n)$, we can conclude for such sufficiently large $k \geq K$, $R_{\ea^{*}\eb^{*}\ec^{*}\ed^{*}\ee^{*}\ef^{*}}^{\ya_k\yb_k\yc_k\yd_k\ye_k\yf_k} \geq r$ and therefore
\begin{equation}
    \norm{\abcdefsymbolk}_{2}^{2} \leq \polyk{3\binom{n}{2}+3n + n(n-1)}  \exp\left( - k r\right),
\end{equation}
which proves the desired exponential decay on the operator norm because of the standard norm estimate $\norm{\abcdefsymbolk}_{\infty} \leq \norm{\abcdefsymbolk}_{2}$.

\appendix

\section{Symmetries of tetrahedral eigenvalues}
\label{sec:symmetries}

There are numerous types of symmetries present in the statement of the tetrahedral Horn problem.
The purpose of this section is to describe the scaling, inversion, tetrahedral, and shifting symmetries of the problem.

\begin{prop}[Scaling]
    \label{prop:scaling}
    Let $(\ea, \eb, \ec, \ed, \ee, \ef) \in \eigs{n}$ and let $s \in (0, \infty)$ be a positive real number.
    Then $(s\ea, s\eb, s\ec, s\ed, s\ee, s\ef) \in \Tetra(n)$ if and only $(\ea, \eb, \ec, \ed, \ee, \ef) \in \Tetra(n)$.
\end{prop}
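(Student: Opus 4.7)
The plan is to observe that the tetrahedral constraints $A+B=C$, $B+D=F$, $D+C=E$, $A+F=E$ are linear in the matrices, so uniform scaling by a positive real $s > 0$ preserves them, and the eigenvalue map commutes with scalar multiplication by positive reals (in fact, for $s>0$, the ordered eigenvalues satisfy $\eig(sX)=s\,\eig(X)$).

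First I would prove the forward direction. Suppose $(\ea,\eb,\ec,\ed,\ee,\ef)\in\Tetra(n)$, so there exist Hermitian matrices $(A,B,C,D,E,F)\in\herm{n}^6$ with $\eig A=\ea$, $\eig B=\eb$, and so on, satisfying the four equalities in \cref{eq:tet_cond}. Setting $(A',B',C',D',E',F')\coloneqq(sA,sB,sC,sD,sE,sF)$, linearity gives
\begin{align}
\begin{split}
A'+B' &= s(A+B) = sC = C', \qquad B'+D' = sF = F', \\
D'+C' &= sE = E', \qquad A'+F' = sE = E'.
\end{split}
\end{align}
Since $s>0$, $\eig(sX)=s\,\eig(X)$ componentwise, so $(\eig A',\ldots,\eig F')=(s\ea,\ldots,s\ef)$, giving $(s\ea,\ldots,s\ef)\in\Tetra(n)$.

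For the reverse direction I would simply apply the forward direction with scaling factor $1/s$, which is again strictly positive, to the tuple $(s\ea,\ldots,s\ef)$ to recover $(\ea,\ldots,\ef)\in\Tetra(n)$. No step here is an obstacle; the only thing to be careful about is the hypothesis $s>0$, which is needed both so that $1/s$ exists and so that $\eig(sX)=s\,\eig(X)$ respects the convention of non-increasingly ordered eigenvalues (for $s<0$ the ordering would reverse, and for $s=0$ the statement trivially collapses).
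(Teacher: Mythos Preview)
Your proof is correct and follows the same approach as the paper: both rely on $\eig(sX)=s\,\eig(X)$ for $s>0$ and the homogeneity of the tetrahedral constraints. Your write-up is slightly more explicit in spelling out the reverse direction via $1/s$, but the argument is essentially identical.
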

\begin{proof}
    The statement follows from the fact that $\eig(s X) = s \eig(X)$ holds for all Hermitian matrices $X$ and positive scalars $s$ and that each of the constraints in \cref{eq:tet_cond} are homogeneous, e.g., we have $sA+sB-sC=0$ if and only if $A+B-C=0$ because $s \neq 0$.
\end{proof}

One may wonder whether it is possible to extend the scaling symmetry to negative values for $s$.
Indeed, this can be accomplished, provided one accounts for the fact that negating a Hermitian matrix both negates and reverses the sorting order of its eigenvalues.
\begin{prop}[Inversion]
    \label{prop:inversion}
    Let $(\ea, \eb, \ec, \ed, \ee, \ef)$ be a tuple of candidate eigenvalues.
    Given any list of non-increasing eigenvalues $\ex = (\ex_1 \geq \cdots \geq \ex_n)$, let $\ex^{*}$ denote the inverted list of resorted and negated eigenvalues: $\ex^{*} = (-\ex_n \geq \cdots \geq - \ex_1)$.
    Then $(\ea, \eb, \ec, \ed, \ee, \ef) \in \Tetra(n)$ if and only if $(\ea^{*}, \eb^{*}, \ec^{*}, \ed^{*}, \ee^{*}, \ef^{*}) \in \Tetra(n)$.
\end{prop}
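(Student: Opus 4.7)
The plan is to exhibit an explicit bijection between realizations of the tetrahedral constraints with eigenvalues $(\ea,\eb,\ec,\ed,\ee,\ef)$ and realizations with eigenvalues $(\ea^{*},\eb^{*},\ec^{*},\ed^{*},\ee^{*},\ef^{*})$, by simply negating each of the six matrices. The key elementary observation I would record first is that for any Hermitian matrix $X \in \herm{n}$ with ordered eigenvalues $\eig X = (\ex_1 \geq \cdots \geq \ex_n)$, the matrix $-X$ is again Hermitian and its ordered eigenvalue list is $(-\ex_n \geq \cdots \geq -\ex_1)$, which is exactly the inverted list $(\eig X)^{*}$ in the notation of the proposition. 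The sorting reversal is precisely why the definition of $\ex^{*}$ negates and reverses rather than just negating.

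Next I would observe that the map $(A,B,C,D,E,F) \mapsto (-A,-B,-C,-D,-E,-F)$ preserves each of the four linear constraints defining $\Tetra(n)$ in \cref{eq:tet_cond}, because each of those constraints is linear and homogeneous: $A+B=C$ is equivalent to $(-A)+(-B)=(-C)$, and the same holds verbatim for the other three identities $B+D=F$, $D+C=E$, $A+F=E$. Combining this with the first observation immediately gives the forward direction: if $(A,B,C,D,E,F)$ realizes $(\ea,\eb,\ec,\ed,\ee,\ef)$, then the negated tuple realizes the inverted eigenvalues.

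For the converse, I would simply reapply the same argument, using that the inversion operation is an involution, i.e., $(\ex^{*})^{*} = \ex$ and $-(-X) = X$. So starting from a realization of $(\ea^{*},\ldots,\ef^{*})$ and negating every matrix produces a realization of $(\ea,\ldots,\ef)$.

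I do not anticipate any real obstacle here — the argument is entirely formal, relying only on linearity of the tetrahedral constraints and the behaviour of eigenvalues under sign change. The only point that merits careful statement is the sorting convention; writing the first step explicitly as $\eig(-X) = (\eig X)^{*}$ makes the rest of the verification immediate.
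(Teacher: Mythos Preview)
Your proposal is correct and matches the paper's own proof essentially verbatim: the paper also argues that $\eig(-X) = (\eig X)^{*}$ and that the negated tuple $(-A,-B,-C,-D,-E,-F)$ satisfies the tetrahedral constraints if and only if $(A,B,C,D,E,F)$ does. Your explicit mention of the involution $(\ex^{*})^{*}=\ex$ for the converse direction is a nice touch that the paper leaves implicit.
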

\begin{proof}
    The statement holds because $\eig(-X) = \ex^{*}$ whenever $\eig(X) = \ex$ and moreover the tuple of matrices $(A,B,C,D,E,F)$ satisfies \cref{eq:tet_cond} if and only if the tuple of negated matrices $(-A,-B,-C,-D,-E,-F)$ satisfies \cref{eq:tet_cond}.
\end{proof}

The inversion symmetry \cref{prop:inversion} is just one element belonging to a group of $48$ symmetries consisting of inversions of eigenvalues, $\ex \rightarrow \ex^{*}$, or permutations of elements in the tuple $(\ea,\eb,\ec,\ed,\ee,\ef)$.

\begin{prop}[Signed permutation]
    \label{prop:tet_symmetry}
    The following transformations preserve $\Tetra(n)$:
    \begin{align}
        \label{eq:gens_of_tet_sym}
        \begin{split}
            T_1 \coloneqq (\ea, \eb, \ec, \ed, \ee, \ef)
            &\mapsto
            (\ea, \ef, \ee, \ed^{*}, \ec, \eb), \\
            T_2 \coloneqq (\ea, \eb, \ec, \ed, \ee, \ef)
            &\mapsto
            (\ec, \ee^*, \ed^{*}, \ef, \eb, \ea^*).
        \end{split}
    \end{align}
    Moreover, $T_1$ and $T_2$ generate a finite group of order $48$ with presentation
    \begin{equation}
        \langle T_1, T_2 \mid T_1^2, T_2^4, (T_1 T_2)^6, (T_2T_1T_2T_1T_2)^2 \rangle.
    \end{equation}
\end{prop}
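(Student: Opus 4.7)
The plan is to split the argument into two parts: first, verifying that $T_1$ and $T_2$ individually preserve $\Tetra(n)$; second, analyzing the abstract group they generate.

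For the first part, my approach is to construct, for each generator, an explicit Hermitian matrix realization of the transformed 6-tuple. Given any $(A,B,C,D,E,F) \in (\herm{n})^6$ satisfying the constraints of \cref{eq:tet_cond}, the 6-tuple $(A', B', C', D', E', F') \coloneqq (A, F, E, -D, C, B)$ has eigenvalues $(\ea, \ef, \ee, \ed^*, \ec, \eb)$ as required, and checking that it also satisfies the tetrahedral constraints reduces to a handful of direct substitutions: e.g., $B' + D' = F - D = B = F'$ uses $B + D = F$, and $D' + C' = -D + E = C = E'$ uses $D + C = E$. This confirms $T_1$ preserves $\Tetra(n)$. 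The same strategy applies to $T_2$ via the realization $(A', B', C', D', E', F') \coloneqq (C, -E, -D, F, B, -A)$, whose four tetrahedral constraints follow from identities such as $C - E = -D$ and $C - A = B$.

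For the second part, my approach is to realize both transformations as signed permutations of the vertex set of an abstract tetrahedron. The constraints have a natural vector-space interpretation in which each matrix is a directed edge between one of four vertices: the parameterization $A = P_2 - P_1, B = P_3 - P_2, C = P_3 - P_1, D = P_4 - P_3, E = P_4 - P_1, F = P_4 - P_2$ for $P_i$ in an auxiliary vector space automatically satisfies \cref{eq:tet_cond}, and conversely every element of $\Tetra(n)$ admits such a realization. Consequently, the group $S_4 \times \{\pm 1\}$ acts on tetrahedra of Hermitian matrices via signed vertex relabeling, and this action preserves $\Tetra(n)$ by combining vertex relabeling with \cref{prop:inversion}. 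Matching the formulas for $T_1$ and $T_2$ against this action, I would identify $T_1$ with the pair $((3\,4), +1)$ and $T_2$ with the pair $((1\,2\,4\,3), -1)$. Since the projections $(3\,4)$ and $(1\,2\,4\,3)$ generate $S_4$ and the sign pattern $((3\,4)\mapsto +1, (1\,2\,4\,3)\mapsto -1)$ is not compatible with the sign homomorphism of $S_4$, the subgroup $\langle T_1, T_2 \rangle \leq S_4 \times \mathbb{Z}/2$ is forced to be the full product of order $48$.

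It remains to establish the presentation. The four stated relations can be checked directly in $S_4 \times \mathbb{Z}/2$: $T_1^2 = e$ and $T_2^4 = e$ are immediate from the orders of $(3\,4)$ and $(1\,2\,4\,3)$; one computes $T_1 T_2 = ((1\,2\,3), -1)$, whose order is $\mathrm{lcm}(3,2) = 6$; and a short calculation reduces $T_2 T_1 T_2 T_1 T_2$ to $((3\,4), -1)$, which squares to the identity. These relations therefore yield a surjection from the abstract group $\langle T_1, T_2 \mid T_1^2, T_2^4, (T_1 T_2)^6, (T_2 T_1 T_2 T_1 T_2)^2 \rangle$ onto $S_4 \times \mathbb{Z}/2$, establishing that the abstract group has order at least $48$. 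The reverse bound I would obtain by Todd-Coxeter coset enumeration applied to the presentation, exhausting the coset graph at no more than $48$ cosets. The main obstacle is this final coset enumeration step: it is mechanical but tedious, and ensuring that no smaller quotient is imposed by an unnoticed consequence of the relations requires careful bookkeeping or an automated check.
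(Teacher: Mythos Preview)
Your verification that $T_1$ and $T_2$ preserve $\Tetra(n)$ is essentially identical to the paper's: you use the same matrix realizations $(A,F,E,-D,C,B)$ and $(C,-E,-D,F,B,-A)$ and check the four constraints directly. For the group-theoretic part, however, the paper simply states that the order and presentation ``can be checked explicitly using computer software'' and stops there, whereas you give a conceptual argument: identifying $T_1,T_2$ with elements of $S_4 \times \mathbb{Z}/2$ via the vertex parametrization, arguing that they generate the full product (since the $S_4$-projections generate $S_4$ and the sign assignment is incompatible with both index-$2$ sections), verifying the four relators by hand, and deferring only the upper bound on the presented group to Todd--Coxeter. Your identification of the group with $S_4 \times \mathbb{Z}/2$ is exactly the content of the Remark the paper places \emph{after} the proposition, so in a sense you have folded that remark into the proof. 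What your approach buys is an explanation of \emph{why} the order is $48$ rather than a black-box verification; what the paper's approach buys is brevity. Both ultimately rely on a mechanical check (software vs.\ coset enumeration) to pin down the presentation, so your stated ``main obstacle'' is no worse than what the paper itself leaves unproven by hand.
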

\begin{proof}
    To begin, notice that both $T_1$ and $T_2$ are invertible (indeed the first operation has order two, while the second operation has order four).
    The claim that these two operations generate a finite group of order $48$ can be checked explicitly using computer software.
    Now suppose $(\ea,\eb,\ec,\ed,\ee,\ef)$ are the eigenvalues of a tuple of Hermitian matrices $(A,B,C,D,E,F)$ satisfying \cref{eq:tet_cond}.
    Then the tuple $(A',B',C',D',E',F') = (A,F,E,-D,C,B)$ (which has the eigenvalues $(\ea, \ef, \ee, \ed^{*}, \ec, \eb)$) also satisfies the tetrahedral conditions because
    \begin{align}
        \label{eq:signed_perm_rows}
        \begin{split}
            A'+B'-C' &= (A+F-E) = 0,  \\
            B'+D'-F' &= -(B+D-F) = 0, \\
            C'+D'-E' &= -(C+D-E) = 0, \\
            A'+F'-E' &= (A+B-C) = 0.
        \end{split}
    \end{align}
    Therefore, $(\ea,\eb,\ec,\ed,\ee,\ef) \in \Tetra(n)$ implies $(\ea^*, \ec, \eb, \ed, \ef, \ee) \not \in \Tetra(n)$ also.
    Moreover, as the operation $T_1$ mapping $(\ea,\eb,\ec,\ed,\ee,\ef)$ to $(\ea^*, \ec, \eb, \ed, \ef, \ee)$ is its own inverse, we conclude that $(\ea,\eb,\ec,\ed,\ee,\ef) \not \in \Tetra(n)$ implies $(\ea^*, \ec, \eb, \ed, \ef, \ee) \not \in \Tetra(n)$ also.
    A similar argument holds for $T_2$ which maps $(\ea,\eb,\ec,\ed,\ee,\ef)$ to $(\ec, \ee^*, \ed^{*}, \ef, \eb, \ea^*)$ where $(A',B',C',D',E',F')$ are given by the matrices $(C,-E,-D,F,B,-A)$.
\end{proof}
\begin{rem}
    The group $G$ of order $48$ appearing in \cref{prop:tet_symmetry} corresponds to the subgroup of all possible signed permutations of $6$ elements generated by permutations and negations of the six matrices $A,B,C,D,E$, and $F$ which preserves the set of linear constraints in \cref{eq:tet_cond}.
    The group $G$ is isomorphic to both the direct product $S_2 \times S_4$ and to the signed permutation group of three elements, i.e., $(S_2 \times S_2 \times S_2) \rtimes S_3$.

    The first isomorphism, $G \simeq S_2 \times S_4$, can be understood as $S_2$ acting by the inversion of all eigenvalues,
    \begin{equation}
        (\ea,\eb,\ec,\ed,\ee,\ef) \leftrightarrow (\ea^*,\eb^*,\ec^*,\ed^*,\ee^*,\ef^*),
    \end{equation}
    while $S_4$ acts by permutation of the \textit{vertices} of the tetrahedron in \cref{fig:tetrahedron_labelling_scheme}:
    each vertex permutation induces a permutation of the six directed edges with the possibility of inverting the orientation of a subset of the edges (such edge orientation reversals can be accounted for a corresponding inversion of eigenvalues).

    The second isomorphism, $G \simeq (S_2 \times S_2 \times S_2) \rtimes S_3$, can be understood by considering the triplet $(A,D), (B,E), (C,F)$ of non-adjacent pairs of edges in \cref{fig:tetrahedron_labelling_scheme}.
    Accordingly, it is helpful to arrange the tuple $(\ea,\eb,\ec,\ed,\ee,\ef)$ into a $2\times 3$ grid using the following identification:
    \begin{equation}
        \left(\tightsextuple{\ea & \eb & \ec & \ed & \ee & \ef}\right)
        \cong
        \left(\tightsextuple{\ea & \eb & \ec \\ \ed & \ee & \ef}\right).
    \end{equation}
    Then one can see that each $S_2$ factor in the direct product $S_2 \times S_2 \times S_2$ acts faithfully on the tuple by both negating a certain subset of the columns and permuting the upper and lower rows leaving just one column unchanged.
    \begin{equation}
        \left(\tightsextuple{\ea & \eb & \ec \\ \ed & \ee & \ef}\right)
        \leftrightarrow
        \left(\tightsextuple{\ea & \ee^* & \ef^* \\ \ed & \eb^* & \ec^*}\right),
        \qquad
        \left(\tightsextuple{\ea & \eb & \ec \\ \ed & \ee & \ef}\right)
        \leftrightarrow
        \left(\tightsextuple{\ed & \eb & \ef \\ \ea & \ee & \ec}\right),
        \qquad
        \left(\tightsextuple{\ea & \eb & \ec \\ \ed & \ee & \ef}\right)
        \leftrightarrow
        \left(\tightsextuple{\ed^* & \ee & \ec \\ \ea^* & \eb & \ef}\right).
    \end{equation}
    The symmetric group $S_3$ of order $3! = 6$ acts instead by permuting the columns and negating a subset of the entries in the top row, producing a subgroup of order $6$ of the form:
    \begin{equation}
        \begin{alignedat}{3}
            &\left(\tightsextuple{\ea & \eb & \ec \\ \ed & \ee & \ef}\right)
            \quad
            &&\left(\tightsextuple{\eb & \ec^* & \ea^* \\ \ee & \ef & \ed}\right)
            \quad
            &&\left(\tightsextuple{\ec^* & \ea & \eb^*  \\ \ef & \ed & \ee }\right)\\[0.5em]
            &\left(\tightsextuple{\ea^* & \ec & \eb \\ \ed & \ef & \ee}\right)
            \quad
            &&\left(\tightsextuple{\eb^* & \ea^*  & \ec^* \\ \ee & \ed & \ef}\right)
            \quad
            &&\left(\tightsextuple{\ec & \eb^* & \ea \\ \ef & \ee & \ed}\right)\\
        \end{alignedat}
    \end{equation}
    The symmetries in \cref{prop:tet_symmetry} also coincide with certain symmetries of the $6j$ operator $\abcdefsymbol$ under the involution $\yx \mapsto \yx^{*}$ corresponding taking the dual representation $V_{\yx} \mapsto (V_{\yx})^{*}$.
    In the case where $n = 2$, the involution map becomes the identity map (as every irreducible representation of $SU(2)$ is self-dual, or the reversal of a pair $(x,-x)$ of traceless eigenvalues is trivial), and thus the symmetry group collapses to a group of order $\abs{S_4} = 4! = 24$.
\end{rem}
\begin{prop}[Shifting symmetry]
    \label{prop:shifting}
    Let $(\ea, \eb, \ec, \ed, \ee, \ef) \in (\eigs{n})^{6}$ and let $x,y,z \in \mathbb R$ be fixed.
    Define shifted eigenvalues $(\tilde \ea, \tilde \eb, \tilde \ec, \tilde \ed, \tilde \ee,\tilde \ef)$ by
    \begin{equation}
        \label{eq:shifted_tet_eigenvalues}
        \begin{alignedat}{3}
             &\tilde \ea_i \coloneqq \ea_i + x, \qquad
            &&\tilde \eb_i \coloneqq \eb_i + y, \qquad
            &&\tilde \ec_i \coloneqq \ec_i + x + y, \\
             &\tilde \ed_i \coloneqq \ed_i + z, \qquad
            &&\tilde \ee_i \coloneqq \ee_i + x + y + z, \qquad
            &&\tilde \ef_i \coloneqq \ef_i + y + z.
        \end{alignedat}
    \end{equation}
    Then $(\tilde \ea, \tilde \eb, \tilde \ec, \tilde \ed, \tilde \ee,\tilde \ef) \in \Tetra(n)$ if and only if $(\ea, \eb, \ec, \ed, \ee, \ef) \in \Tetra(n)$.
\end{prop}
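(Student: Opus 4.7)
The plan is to establish the shifting symmetry by exhibiting an explicit bijection at the level of Hermitian matrices (not just eigenvalues) that implements the shift. Concretely, given any tuple of Hermitian matrices $(A,B,C,D,E,F)$ realizing the eigenvalues $(\ea,\eb,\ec,\ed,\ee,\ef)$, I would define shifted matrices by adding appropriate multiples of the identity:
\begin{align*}
    \tilde A &= A + x\, I_n, & \tilde B &= B + y\, I_n, & \tilde C &= C + (x+y)\, I_n, \\
    \tilde D &= D + z\, I_n, & \tilde E &= E + (x+y+z)\, I_n, & \tilde F &= F + (y+z)\, I_n.
\end{align*}
Since $\eig(X + s I_n) = \eig(X) + (s,\ldots,s)$ for any real $s$, these shifted matrices have exactly the eigenvalues $(\tilde \ea, \tilde \eb, \tilde \ec, \tilde \ed, \tilde \ee, \tilde \ef)$ prescribed in \cref{eq:shifted_tet_eigenvalues}.

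The next step is to verify that the shifts are chosen consistently so that the four tetrahedral constraints of \cref{eq:tet_cond} are preserved. A direct check gives
\begin{align*}
    \tilde A + \tilde B - \tilde C &= (A+B-C) + (x+y-(x+y))\, I_n = 0,\\
    \tilde B + \tilde D - \tilde F &= (B+D-F) + (y+z-(y+z))\, I_n = 0,\\
    \tilde C + \tilde D - \tilde E &= (C+D-E) + ((x+y)+z-(x+y+z))\, I_n = 0,\\
    \tilde A + \tilde F - \tilde E &= (A+F-E) + (x+(y+z)-(x+y+z))\, I_n = 0.
\end{align*}
This shows $(\tilde \ea, \tilde \eb, \tilde \ec, \tilde \ed, \tilde \ee, \tilde \ef) \in \Tetra(n)$ whenever $(\ea, \eb, \ec, \ed, \ee, \ef) \in \Tetra(n)$.

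For the converse direction, I apply the same construction with the signs of $x, y, z$ reversed: given matrices $(\tilde A, \ldots, \tilde F)$ realizing the shifted eigenvalues, subtracting $x I_n$, $y I_n$, $z I_n$, $(x+y) I_n$, $(y+z) I_n$, and $(x+y+z) I_n$ from the appropriate matrices produces a tuple realizing the original eigenvalues and satisfying the tetrahedral constraints by the same linear check. Since every step is a routine linear algebra computation, there is no serious obstacle here; the only thing to be careful about is bookkeeping the six different scalar shifts so that they cancel correctly in all four relations, which is already built into the definition in \cref{eq:shifted_tet_eigenvalues}.
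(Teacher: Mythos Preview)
Your proof is correct and follows essentially the same approach as the paper: define the shifted matrices by adding the appropriate multiples of the identity, note that $\eig(X+sI_n)=\eig(X)+s$, and verify that the tetrahedral constraints are preserved. Your write-up is in fact slightly more explicit than the paper's, since you spell out the cancellation in each of the four relations and state the converse via the inverse shift.
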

\begin{proof}
    Given a Hermitian matrix $A$ with eigenvalues $\eig(A) = \ea$ and a constant $w \in \mathbb R$, the eigenvalues of $A+wI$ (where $I$ is the identity matrix) are
    \begin{equation}
        \eig(A+wI) = (\ea_1 + w \geq \cdots \geq \ea_n + w).
    \end{equation}
    The claim then follows because the tuple of matrices
    \begin{equation}
        \begin{alignedat}{3}
             &\tilde A \coloneqq A + xI, \qquad
            &&\tilde B \coloneqq B + yI, \qquad
            &&\tilde C \coloneqq C + (x + y)I, \\
             &\tilde D \coloneqq D + zI, \qquad
            &&\tilde E \coloneqq E + (x + y + z)I, \qquad
            &&\tilde F \coloneqq F + (y + z)I,
        \end{alignedat}
    \end{equation}
    have eigenvalues $(\tilde \ea, \tilde \eb, \tilde \ec, \tilde \ed, \tilde \ee,\tilde \ef)$ and satisfy the tetrahedral constraints in \cref{eq:tet_cond} if and only if $(A,B,C,D,E,F)$ satisfy the tetrahedral constraints in \cref{eq:tet_cond}.
\end{proof}
As a corollary of the shifting symmetries (\cref{prop:shifting}) one concludes that to solve the tetrahedral Horn problem, in a certain sense, it suffices to solve the tetrahedral Horn problem merely for traceless matrices (\cref{cor:traceless}) or, alternatively, for non-negative or positive matrices (\cref{cor:non_negativity}).
\begin{cor}[Traceless eigenvalues]
    \label{cor:traceless}
    Let $(\ea, \eb, \ec, \ed, \ee, \ef)$ be a tuple of candidate eigenvalues and for any list of $n$ eigenvalues $\ex = (\ex_1 \geq \cdots \geq \ex_n)$, let $\ex^{0} \coloneqq \ex - {\tex}/n$ denote the traceless list of eigenvalues obtained by shifting $\ex$ by $-{\tex}/n$.
    Then $(\ea, \eb, \ec, \ed, \ee, \ef) \in \Tetra(n)$ if and only if (i) their traces $({\tea}, {\teb}, {\tec}, {\ted}, {\tee}, {\tef})$ are tetrahedral (they satisfy \cref{eq:tet_trace_cond}) and (ii) $(\ea^0, \eb^0, \ec^0, \ed^0, \ee^0, \ef^0) \in \Tetra(n)$.
\end{cor}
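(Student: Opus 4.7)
The plan is to deduce this as a direct consequence of the shifting symmetry (\cref{prop:shifting}), by choosing the shift parameters $(x,y,z)$ so as to precisely cancel the traces.

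For the forward direction, suppose $(\ea,\eb,\ec,\ed,\ee,\ef) \in \Tetra(n)$ and let $(A,B,C,D,E,F)$ be Hermitian matrices realizing these eigenvalues and satisfying the constraints of \cref{eq:tet_cond}. Taking the trace of each of the four identities $A+B=C$, $B+D=F$, $A+F=E$, $C+D=E$ yields condition (i), the trace conditions of \cref{eq:tet_trace_cond}. For condition (ii), apply \cref{prop:shifting} with the shifts $x=-\tea/n$, $y=-\teb/n$, $z=-\ted/n$. Using (i), the resulting shifted eigenvalues are exactly the traceless versions: for instance $\tilde\ec_i = \ec_i + x + y = \ec_i - (\tea+\teb)/n = \ec_i - \tec/n = \ec^0_i$, and similarly $\tilde\ee = \ee^0$ and $\tilde\ef = \ef^0$ follow from the remaining trace equalities. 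Hence $(\ea^0,\eb^0,\ec^0,\ed^0,\ee^0,\ef^0) \in \Tetra(n)$ by \cref{prop:shifting}.

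For the reverse direction, assume (i) and (ii). Apply \cref{prop:shifting} to the traceless tuple $(\ea^0,\eb^0,\ec^0,\ed^0,\ee^0,\ef^0) \in \Tetra(n)$ with the opposite shifts $x=\tea/n$, $y=\teb/n$, $z=\ted/n$. By the trace conditions in (i), the combined shifts on the derived edges agree with the traces of $\ec,\ee,\ef$ respectively, so the resulting shifted tuple is exactly $(\ea,\eb,\ec,\ed,\ee,\ef)$. Membership in $\Tetra(n)$ therefore follows from \cref{prop:shifting}.

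The step requiring care is simply verifying that the three free shift parameters $(x,y,z)$ suffice to simultaneously center all six eigenvalue lists; this is where condition (i) enters essentially, as it guarantees that the induced shifts $x+y$, $y+z$, and $x+y+z$ on $\ec, \ef, \ee$ indeed coincide with $\tec/n, \tef/n, \tee/n$. No further content is needed beyond this bookkeeping.
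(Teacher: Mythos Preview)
Your proof is correct and follows essentially the same approach as the paper: both directions are obtained from \cref{prop:shifting} with the specific shifts $x=-\tea/n$, $y=-\teb/n$, $z=-\ted/n$ (and their negations for the converse), using the trace conditions to check that the induced shifts on $\ec,\ee,\ef$ match. Your write-up is in fact more explicit than the paper's about why condition~(i) is needed to make the three free parameters suffice for all six lists.
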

\begin{proof}
    This ``only if'' portion of the statement follows from both the tetrahedral trace conditions (\cref{eq:tet_trace_cond}) and applications of the shifting symmetry \cref{prop:shifting} with $(x,y,z)$ values
    \begin{equation}
        x = - n^{-1}\Tr(A), \qquad
        y = - n^{-1}\Tr(B) \qquad
        z = - n^{-1}\Tr(D).
    \end{equation}
    The ``if'' portion of the statement follows from the shifting symmetry applied to the negated $(x,y,z)$ values.
\end{proof}
The next corollary of the shifting symmetry proves that to solve the tetrahedral Horn problem it suffices to solve the tetrahedral Horn problem for non-negative (or even positive) eigenvalues only.
Indeed, one can take the shifting parameters $(x,y,z)$ appearing in \cref{prop:shifting} to be sufficiently large.
The following corollary establishes, in a certain sense, the minimum shifting required to ensure non-negative (or positive) eigenvalues for all six Hermitian matrices.
\begin{cor}[Non-negativity]
    \label{cor:non_negativity}
    If $(\ea, \eb, \ec, \ed, \ee, \ef) \in \Tetra(n)$, then
    \begin{equation}
        \label{eq:min_eig_ineqs}
        \ec_n \geq \ea_n + \eb_n, \qquad \ef_n \geq \eb_n + \ed_n, \qquad \ee_n \geq \ed_n + \ec_n, \qquad \ee_n \geq \ea_n + \ef_n.
    \end{equation}
    Let $(x,y,z) \in \mathbb R^{3}$ be such that
    \begin{equation}
        \label{eq:positive_shift}
        x \geq -\ea_n, \qquad
        y \geq -\eb_n, \qquad
        z \geq -\ed_n,
    \end{equation}
    and let $(\tilde \ea, \tilde \eb, \tilde \ec, \tilde \ed, \tilde \ee, \tilde \ef)$ be shifted eigenvalues given by \cref{eq:shifted_tet_eigenvalues}.
    Then
    \begin{equation}
        \label{eq:positive_shift_iff}
        (\ea, \eb, \ec, \ed, \ee, \ef) \in \Tetra(n) \iff (\tilde \ea, \tilde \eb, \tilde \ec, \tilde \ed, \tilde \ee, \tilde \ef) \in \Tetra^{+}(n).
    \end{equation}
\end{cor}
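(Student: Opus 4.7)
The plan is to prove the two assertions of \cref{cor:non_negativity} in sequence. For the four inequalities in \cref{eq:min_eig_ineqs}, the starting point is the elementary Weyl-type bound that for Hermitian matrices $X, Y \in \herm{n}$ with $X + Y = Z$, one has $\lambda_{\min}(Z) \geq \lambda_{\min}(X) + \lambda_{\min}(Y)$. Indeed, for any unit vector $v \in \mathbb{C}^{n}$, $v^{*}Zv = v^{*}Xv + v^{*}Yv \geq \lambda_{\min}(X) + \lambda_{\min}(Y)$, and taking the infimum over $v$ yields the claim. Applying this bound in turn to each of the four tetrahedral constraint equations $A + B = C$, $B + D = F$, $C + D = E$, and $A + F = E$ of \cref{eq:tet_cond} produces the four inequalities of \cref{eq:min_eig_ineqs}.

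For the equivalence in \cref{eq:positive_shift_iff}, I would invoke the shifting symmetry \cref{prop:shifting} directly, which yields the logical equivalence $(\ea,\eb,\ec,\ed,\ee,\ef) \in \Tetra(n) \iff (\tilde\ea,\tilde\eb,\tilde\ec,\tilde\ed,\tilde\ee,\tilde\ef) \in \Tetra(n)$ with no restriction on $(x,y,z)$. Since $\Tetra^{+}(n) = (\eigsp{n})^{6} \cap \Tetra(n)$, the only remaining task is to verify that when $(\ea,\eb,\ec,\ed,\ee,\ef) \in \Tetra(n)$ and $(x,y,z)$ satisfies \cref{eq:positive_shift}, each of the six shifted tuples lies in $\eigsp{n}$. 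As each $\tilde\ex$ remains sorted in decreasing order after a scalar shift, it suffices to check non-negativity of the six smallest entries $\tilde\ea_n, \tilde\eb_n, \tilde\ec_n, \tilde\ed_n, \tilde\ee_n, \tilde\ef_n$.

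Non-negativity of $\tilde\ea_n = \ea_n + x$, $\tilde\eb_n = \eb_n + y$, and $\tilde\ed_n = \ed_n + z$ is immediate from \cref{eq:positive_shift}. For the remaining three, I would combine \cref{eq:min_eig_ineqs} with \cref{eq:positive_shift}: for instance $\tilde\ec_n = \ec_n + x + y \geq (\ea_n + \eb_n) + x + y \geq 0$, and the analogous estimates $\tilde\ef_n \geq (\eb_n + \ed_n) + y + z \geq 0$ and $\tilde\ee_n \geq (\ec_n + \ed_n) + x + y + z \geq (\ea_n + \eb_n + \ed_n) + x + y + z \geq 0$ (using the nested chain $\ee_n \geq \ec_n + \ed_n \geq \ea_n + \eb_n + \ed_n$ that comes from chaining two of the inequalities already proven) dispose of the other two. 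There is no substantive obstacle: the whole argument reduces to the Weyl minimum-eigenvalue inequality together with direct bookkeeping against the shift definitions \cref{eq:shifted_tet_eigenvalues}.
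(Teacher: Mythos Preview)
Your proposal is correct and follows essentially the same argument as the paper: the variational characterization of the smallest eigenvalue gives the four Weyl-type inequalities of \cref{eq:min_eig_ineqs}, and the equivalence \cref{eq:positive_shift_iff} is obtained from \cref{prop:shifting} together with the non-negativity check on the shifted minimal eigenvalues. Your treatment is in fact slightly more explicit than the paper's, which handles only $\tilde\ea$ in detail and then asserts that ``similar statements hold'' for $\tilde\eb,\tilde\ec,\tilde\ed,\tilde\ee,\tilde\ef$.
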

\begin{proof}
    That $(\ea, \eb, \ec, \ed, \ee, \ef) \in \Tetra(n)$ must satisfy \cref{eq:min_eig_ineqs} follows from four applications of the well-known inequality $\ec_n \geq \ea_n + \eb_n$ for the original Horn problem where $C=A+B$ \cite{bhatia2001linear}.
    As a reminder, the smallest eigenvalue $\ex_n$ of a matrix $X$ is characterized by a minimization: $\ex_n = \min_{\norm{v} = 1} \braket{v, Xv}$ which produces
    \begin{align}
        \begin{split}
            &\ec_n = \min_{\norm{v} = 1} \braket{v, C v} = \min_{\norm{v} = 1} \left(\braket{v, A v} + \braket{v, B v}\right) \\
            &\qquad \geq \min_{\norm{v} = 1} \braket{v, A v} + \min_{\norm{w} = 1}\braket{w, B w} = \ea_n + \eb_n.
        \end{split}
    \end{align}
    Four applications of this inequality produces \cref{eq:min_eig_ineqs}.
    Now if $(x,y,z)$ satisfy \cref{eq:positive_shift}, we conclude non-negativity of the shifted eigenvalues, e.g.,
    \begin{equation}
        \forall j : \qquad \tilde \ea_j = \ea_j + x \geq \ea_n + x \geq 0,
    \end{equation}
    Similar statements hold for $\eb,\ec,\ed, \ee$ and $\ef$.
    This establishes the $\Rightarrow$ portion of \cref{eq:positive_shift_iff}.
    The reverse implication holds because $\Tetra^{+}(n) \subseteq \Tetra(n)$ and $\Tetra(n)$ is closed under shifting by \cref{prop:shifting}.
\end{proof}
\begin{rem}
    It is worth emphasizing that our main theorem, \cref{thm:tet_iff}, provides necessary and sufficient conditions for determining membership in $\Tetra^{+}(n)$.
    Nevertheless, by a simple shifting of eigenvalues, \cref{cor:non_negativity} demonstrates how any characterization of $\Tetra^{+}(n)$ also provides a characterization of $\Tetra(n)$.
    However, while $\Tetra(n)$ has numerous symmetries, including the discrete group of order $48$ described by \cref{prop:tet_symmetry}, the subset $\Tetra^{+}(n) \subseteq \Tetra(n)$ has fewer symmetries;
    indeed, the only element in the aforementioned discrete group which preserves non-negativity of eigenvalues is the permutation $(\ea,\eb,\ec,\ed,\ee,\ef) \leftrightarrow (\ed, \eb, \ef, \ea, \ee, \ec)$.
    This difference explains why the inequalities from \cref{thm:tet_iff} do not directly respect the symmetries of $\Tetra(n)$.
\end{rem}
\begin{rem}
    Although this section listed a handful of symmetries of the tetrahedral Horn problem, there remains the possibility of additional symmetries which are unknown at this time.
    For example, in the case of $2\times 2$ Hermitian matrices, an additional class of symmetries on the set of edge-lengths $(a,b,c,e,d,f)$ of tetrahedra, known as the \textit{Regge symmetries}, hold \cite{akopyan2019regge,boalch2007regge,ponzano1968semiclassical}.
    If there exists a tetrahedron with edge lengths $(a,b,c,d,e,f)$, then there also exists a tetrahedron with edge lengths
    \begin{equation}
        (s_1-a,s_1-b,c,s_1-d,s_1-e,f) \quad \text{where} \quad s_1 = \frac{a+b+d+e}{2}.
    \end{equation}
    Combining this symmetry with the permutational symmetries given by \cref{prop:tet_symmetry} (noting eigenvalues of traceless $2 \times 2$ Hermitian matrices are invariant under negation) one concludes the co-existence of a family of six tetrahedra with edge lengths given by \cite[Chapter 9]{varshalovich1988quantum}:
    \begin{align}
        \begin{split}
            &(a,b,c,d,e,f), \\
            &(s_1-a,s_1-b,c,s_1-d,s_1-e,f), \\
            &(a,s_3-b,s_3-c,d,s_3-e,s_3-f), \\
            &(s_2-a,b,s_2-c,s_2-d,e,s_2-f), \\
            &(s_2-d,s_1-e,s_3-f,s_2-a,s_1-b,s_3-c), \\
            &(s_1-d,s_3-e,s_2-f,s_1-a,s_3-b,s_2-c),
        \end{split}
    \end{align}
    where
    \begin{equation}
        s_1 = \frac{a+b+d+e}{2}, \quad
        s_2 = \frac{a+c+d+f}{2}, \quad
        s_3 = \frac{b+c+e+f}{2}.
    \end{equation}
\end{rem}
\begin{rem}
    The Regge symmetry of a tetrahedron can be understood as a consequence of an earlier discovery of a surprising symmetry of Wigner $6j$ symbol for $SU(2)$ originally due to \citeauthor{regge1959symmetry} \cite{regge1959symmetry}.
    The paper due to \citeauthor{boalch2007regge} provides a comprehensive and modern treatment of the Regge symmetry for the $6j$ symbol for $SU(2)$ by viewing it as a consequence of a symmetry of multiplicity-free $6j$-symbols for $SU(3)$ \cite{boalch2007regge}.
    That the Regge symmetry of a tetrahedron emerges from the Regge symmetry of the associated $6j$ symbols can be viewed as a consequence of the semiclassical limit of the latter \cite{ponzano1968semiclassical,roberts1999classical}.
    It has also be shown that the Regge symmetry of Euclidean tetrahedra is actually a scissors-congruence preserving both the volume and Dhen invariants \cite{akopyan2019regge}.
    At the moment, it remains unclear if analogous symmetries for the generalized Wigner $6j$ symbol exist for $n \geq 3$ (although there are conjectured symmetries for the $6j$ symbols for $U_q(\mathfrak{sl}_n)$ which would follow from the eigenvalue hypothesis \cite{morozov2018new,lanina2023tug}).
    It could also be possible for additional symmetries to be present in the tetrahedral Horn problem for $n \geq 3$ which are \textit{not} present in the $6j$ symbol for $U(n)$.
\end{rem}

\section{Results for the original Horn problem}
\label{sec:new_ineq_horn_orig}

The purpose of this section is to describe how our methods can also be used to prove the following result which pertains to the original Horn problem.

\begin{thm}
    \label{thm:tri_iff}
    Let $(\ea,\eb,\ec) \in (\eigsp{n})^3$ be a triple satisfying the trace conditions.
    Then $(\ea,\eb,\ec) \in \Horn^{+}(n)$ if and only if for all $(\ya,\yb) \in (\undomweightsp{n})^2$,
    \begin{equation}
        \label{eq:tri_iff}
        \vp_{\ya}(\ea)\vp_{\yb}(\eb)
        \leq \sum_{\substack{\yc\\c^{\yc}_{\ya\yb} > 0}} \vp_{\yc}(\ec).
    \end{equation}
    Furthermore, if these inequalities are satisfied for all pairs $(\ya,\yb) \in (\undomweightsp{n})^2$ with $\sya + \syb = k$, then
    \begin{equation}
        \label{eq:tri_dist_bound}
        \frac{D(\ea,\eb;\ec)}{\tec} \leq 2 \sqrt{5} n \sqrt{\frac{\ln \polyk{}}{k}}.
    \end{equation}
    where $D(\ea,\eb;\ec)\coloneqq \min_{(A,B) \in \orbit_\ea \times \orbit_\eb} \norm{\eig(A+B)-\ec}_{1}$.
\end{thm}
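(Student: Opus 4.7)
The plan is to follow the two-step strategy of \cref{thm:tet_iff} from \cref{sec:proof_tet_iff}, simplified for the triangular setting. For necessity, assume $C = A + B$ with $A, B \in \hermp{n}$ having the prescribed eigenvalues. By the product identity \cref{eq:varphi_product_sum}, $\vp_{\ya}(\ea) \vp_{\yb}(\eb) = \sum_{\yc} \vp^{\ya\yb}_{\yc}(A, B)$, where every summand is non-negative and vanishes unless $c^{\yc}_{\ya\yb} > 0$. By the binomial theorem \cref{thm:the_binomial_theorem} applied to $C = A+B$, $\vp_{\yc}(\ec) = \sum_{\ya', \yb'} \vp^{\ya'\yb'}_{\yc}(A, B) \geq \vp^{\ya\yb}_{\yc}(A,B)$; summing over $\yc$ with $c^{\yc}_{\ya\yb} > 0$ yields \cref{eq:tri_iff}.

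For the distance bound, fix $(\ya, \yb)$ with $\sya + \syb = k$ and let $\yc^*$ maximize $\vp_{\yc}(\ec)$ among the $\yc$ with $c^{\yc}_{\ya\yb} > 0$ (which forces $\syc = k$). Since there are at most $\polyk{n}$ such $\yc$, the hypothesis yields
\begin{equation*}
    \vp_{\ya}(\ea) \vp_{\yb}(\eb) \leq \polyk{n} \vp_{\yc^*}(\ec).
\end{equation*}
Independently, the two-matrix analog of \cref{lem:estimate_E_x_y_z_bound} (a direct consequence of \cref{prop:random_matrices} together with $\dim V_\ya \dim V_\yb \leq \polyk{n(n-1)}$ and $c^{\yc^*}_{\ya\yb} \dim V_{\yc^*} \geq 1$) provides $(X^*, Y^*) \in \orbit_{\ea} \times \orbit_{\eb}$ realizing the maximum of $\vp^{\ya\yb}_{\yc^*}(X, Y)$; combining with the binomial-theorem bound $\vp^{\ya\yb}_{\yc^*}(X^*, Y^*) \leq \vp_{\yc^*}(C^*)$ where $C^* \coloneqq X^* + Y^*$ gives
\begin{equation*}
    \vp_{\ya}(\ea) \vp_{\yb}(\eb) \leq \polyk{n(n-1)} \vp_{\yc^*}(C^*).
\end{equation*}

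Multiplying these two bounds, taking a square root, and applying \cref{eq:weak_eig_sep_varphi} together with the Bhattacharyya-to-distance estimate \cref{eq:bhattacharyya_vs_distance} (using the trace condition $\Tr C^* = \tea + \teb = \tec$) produces
\begin{equation*}
    \vp_{\ya}(\ea) \vp_{\yb}(\eb) \leq \polyk{n^2/2 + \binom{n}{2}} \frac{\tec^k}{k!} \exp\left( -\frac{k \norm{\ec - \eig(C^*)}_1^2}{8 \tec^2} \right).
\end{equation*}
Since $(X^*, Y^*) \in \orbit_{\ea} \times \orbit_{\eb}$, we have $\norm{\ec - \eig(C^*)}_1 \geq D(\ea, \eb; \ec)$, so the exponential admits an upper bound uniform in $(\ya, \yb)$. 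Summing over the at most $\polyk{2n}$ pairs $(\ya, \yb)$ with $\sya + \syb = k$ and using the normalization $\sum_{\sya + \syb = k} \vp_{\ya}(\ea) \vp_{\yb}(\eb) = (\tea+\teb)^k/k! = \tec^k/k!$ from \cref{thm:the_binomial_theorem} cancels the $\tec^k/k!$ factor. Rearranging yields $D(\ea, \eb; \ec)^2 / \tec^2 \leq 4 n (2n+3) \ln \polyk{} / k$, and since $2n + 3 \leq 5n$ for $n \geq 1$, this is exactly \cref{eq:tri_dist_bound}.

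The main technical ingredient is the two-matrix counterpart of \cref{lem:estimate_E_x_y_z_bound}; its proof is a direct simplification of the three-matrix version and is not a genuine obstacle. The only subtlety is the careful bookkeeping of polynomial prefactors so that the final constant matches the stated $2\sqrt{5}\,n$; as shown above the exponents combine to $n(2n+3)/2$ under the square root, which is precisely the estimate needed.
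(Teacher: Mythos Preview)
Your proposal is correct and follows essentially the same route as the paper's own proof: the same necessity argument via $\vp^{\ya\yb}_\yc(A,B)\leq \vp_\yc(\ec)$, the same pair of bounds on $\vp_\ya(\ea)\vp_\yb(\eb)$ (one from the assumed inequality, one from \cref{prop:random_matrices} with the dimension estimate), and the same combination via eigenvalue separation and summation over $(\ya,\yb)$. Your exponent bookkeeping $n^2/2+\binom{n}{2}=n^2-n/2$ and $4n(2n+3)=8(n^2+3n/2)$ matches the paper's exactly.
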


The proof of this theorem is divided into two parts.
First we prove the necessity of \cref{eq:tri_iff} in \cref{sec:tri_necess}.
Then afterwards we prove the distance bound in \cref{eq:tri_dist_bound} which in turn implies the sufficiency of \cref{eq:tri_iff} upon taking the limit as $k \to \infty$.

\subsection{Necessary conditions}
\label{sec:tri_necess}

The following proposition follows directly from the binomial theorem (\cref{thm:the_binomial_theorem}) and pertains to sums of the quantity $\vp^{\ya\yb}_{\yc}(X,Y)$.

\begin{prop}
    \label{prop:bisummation}
    Let $X,Y$ be $n\times n$ matrices. Then the following identities hold:
    \begin{align}
        \forall \ya,\yb : \quad \sum_{\yc} \vp^{\ya\yb}_{\yc}(X,Y) &= \vp_{\ya}(X)\vp_{\yb}(Y), \\
        \forall \yc : \quad \sum_{\ya,\yb} \vp^{\ya\yb}_{\yc}(X,Y) &= \vp_{\yc}(X+Y).
    \end{align}
\end{prop}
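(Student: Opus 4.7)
The plan is to observe that both identities are essentially already established in the background material and simply need to be restated. The first identity is a direct consequence of the orthogonal decomposition of the tensor product $V_{\ya} \otimes V_{\yb}$, while the second identity is exactly the content of the binomial theorem (\cref{thm:the_binomial_theorem}).

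For the first identity, I would recall that $\{\Pi^{\yc}_{\ya\yb}\}_{\yc}$ is a complete family of orthogonal projectors onto the isotypical components appearing in the decomposition $V_{\ya}\ot V_{\yb} \cong \bigoplus_\yc C^\yc_{\ya\yb}\ot V_\yc$, so that $\sum_{\yc}\Pi^{\yc}_{\ya\yb} = \id_{V_{\ya}\ot V_{\yb}}$. Inserting this into the definition of $\vp^{\ya\yb}_{\yc}(X,Y)$ and using the multiplicativity of the trace over tensor products gives
\begin{equation*}
    \sum_{\yc}\vp^{\ya\yb}_{\yc}(X,Y) = \Tr_{V_\ya\ot V_\yb}(\Phi_\ya(X)\ot\Phi_\yb(Y)) = \vp_\ya(X)\vp_\yb(Y),
\end{equation*}
which is exactly the argument that already appeared in the proof of \cref{eq:varphi_product_sum}; I would simply point back to that proof.

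For the second identity, I would note that the claim $\sum_{\ya,\yb}\vp^{\ya\yb}_{\yc}(X,Y) = \vp_\yc(X+Y)$ is literally the statement of \cref{thm:the_binomial_theorem} (see \cref{eq:varphi_sum_sum}), so the proof is complete by invoking that theorem. If a self-contained restatement is desired, one could briefly recapitulate the key mechanism: expand $(X+Y)^{\ot k}$ via the non-commutative binomial expansion, use cyclicity of the trace together with the $S_k$-equivariance of $\Pi^\yc$ to collapse the sum over permutations, and then insert the Schur--Weyl projectors $\sum_{\ya\vdash l}\Pi^\ya$ and $\sum_{\yb\vdash k-l}\Pi^\yb$ to obtain the double sum over $(\ya,\yb)$.

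There is no real obstacle here since both identities have effectively been proven earlier in the background section; the proposition serves as a convenient consolidation of these two summation rules for use in the subsequent proof of \cref{thm:tri_iff}. Consequently, the entire proof reduces to two short citations, and I would write it as such rather than repeating either argument in full.
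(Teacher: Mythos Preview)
Your proposal is correct and matches the paper's approach exactly: the sum over $\yc$ is handled by the completeness relation $\sum_\yc \Pi^{\yc}_{\ya\yb} = \id_{V_\ya \ot V_\yb}$ (as in the proof of \cref{eq:varphi_product_sum}), and the sum over $(\ya,\yb)$ is precisely the binomial theorem \cref{thm:the_binomial_theorem}. The paper's own proof cites these same two ingredients, so your write-up is essentially a restatement of it.
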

\begin{proof}
    The first identity involving a summation over $\ya$ and $\yb$ is the result of taking the trace of both sides of \cref{thm:the_binomial_theorem}.
    For the second identity, involving a summation over $\yc$, follows from the identity $\sum_{\yc \vdash \sya+\syb} \Pi^{\yc}_{\ya\yb} = \id_{V_{\ya}\ot V_{\yb}}$:
    \begin{align}
        \begin{split}
            \vp_{\ya}(X)\vp_{\yb}(Y)
            &= \Tr[\Phi_{\ya}(X) \ot \Phi_{\yb}(Y)], \\
            &= \sum_{\yc} \Tr[\Pi^{\yc}_{\ya\yb}(\Phi_{\ya}(X) \ot \Phi_{\yb}(Y))], \\
            &= \sum_{\yc} \vp^{\ya\yb}_{\yc}(X,Y),
        \end{split}
    \end{align}
    where the last equality is just the definition of $\vp^{\ya\yb}_{\yc}(X,Y)$.
    Once again, the condition that $\syc = \sya + \syb$ is implicitly present.
\end{proof}

Under the assumption that the matrices $X$ and $Y$ are non-negative, we can conclude that the real number $\vp^{\ya\yb}_{\yc}(X,Y)$ is also non-negative because $\vp^{\ya\yb}_{\yc}(X,Y) = \Tr[\Pi^{\yc}_{\ya\yb} (\Phi_{\ya}(X) \ot \Phi_{\yb}(Y))]$ where $\Pi^{\yc}_{\ya\yb}$ is a projection operator and $\Phi_{\ya}(X),\Phi_{\yb}(Y) \geq 0$ because $X,Y \geq 0$.
Moreover, we note that $\vp^{\ya\yb}_{\yc}(X,Y) = 0$ whenever the Littlewood-Richardson coefficient $c^{\yc}_{\ya\yb}$ vanishes.
These observations imply the following necessary condition for compatibility of eigenvalues for the original Horn problem.
\begin{prop}
    \label{prop:coupling_condition}
    Let $(a,b,c) \in (\eigsp{n})^{3}$.
    Then there exists $(A,B,C) \in (\hermp{n})^{3}$ satisfying $A+B=C$ with eigenvalues $(a,b,c)$ only if for every $(\ya,\yb,\yc) \in (\undomweightsp{n})^{3}$ there exists a $\Gamma^{\yc}_{\ya\yb} \geq 0$ such that $\Gamma^{\yc}_{\ya\yb} = 0$ whenever $c^{\yc}_{\ya\yb} = 0$ and
    \begin{align}
        \forall \ya,\yb : \quad \sum_{\yc} \Gamma^{\yc}_{\ya\yb} &= \vp_{\ya}(\ea)\vp_{\yb}(\eb), \\
        \forall \yc : \quad \sum_{\ya,\yb} \Gamma^{\yc}_{\ya\yb} &= \vp_{\yc}(\ec).
    \end{align}
\end{prop}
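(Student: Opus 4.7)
The plan is to take the matrices $(A,B,C) \in (\hermp{n})^{3}$ guaranteed by the hypothesis and simply set
\[
\Gamma^{\yc}_{\ya\yb} \coloneqq \vp^{\ya\yb}_{\yc}(A,B).
\]
The proof then reduces to verifying the three listed properties, each of which follows essentially for free from the machinery already developed in \cref{sec:background_rep_theory} and \cref{prop:bisummation}. There is no real obstacle; the main task is to point to the right earlier statements in the right order.

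First I would establish non-negativity of $\Gamma^{\yc}_{\ya\yb}$. Since $A,B \in \hermp{n}$, the rescaled representation operators $\Phi_{\ya}(A)$ and $\Phi_{\yb}(B)$ are non-negative Hermitian (this was proven in the proposition following \cref{eq:X^k}), hence so is the tensor product $\Phi_{\ya}(A) \otimes \Phi_{\yb}(B)$. Because $\Pi^{\yc}_{\ya\yb}$ is an orthogonal projection, $\vp^{\ya\yb}_{\yc}(A,B) = \Tr[\Pi^{\yc}_{\ya\yb}(\Phi_{\ya}(A) \otimes \Phi_{\yb}(B))]$ is the trace of the non-negative operator restricted to the image of $\Pi^{\yc}_{\ya\yb}$, and therefore lies in $[0,\infty)$. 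This is precisely the non-negativity remark made in the paragraph immediately preceding the proposition statement.

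Next I would check the support condition $c^{\yc}_{\ya\yb} = 0 \implies \Gamma^{\yc}_{\ya\yb} = 0$. If $c^{\yc}_{\ya\yb} = 0$ then the multiplicity space $C^{\yc}_{\ya\yb}$ is zero-dimensional, so in the decomposition $V_{\ya} \otimes V_{\yb} \cong \bigoplus_{\yc} C^{\yc}_{\ya\yb} \otimes V_{\yc}$ the summand indexed by $\yc$ is absent and the corresponding orthogonal projector $\Pi^{\yc}_{\ya\yb}$ is the zero operator. Consequently $\vp^{\ya\yb}_{\yc}(A,B) = 0$, as required.

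Finally, the two marginal identities are read off directly from \cref{prop:bisummation}: summing over $\yc$ gives $\sum_{\yc} \Gamma^{\yc}_{\ya\yb} = \vp_{\ya}(A)\vp_{\yb}(B)$, while summing over $(\ya,\yb)$ gives $\sum_{\ya,\yb}\Gamma^{\yc}_{\ya\yb} = \vp_{\yc}(A+B) = \vp_{\yc}(C)$. Using that $A+B = C$ and that $\vp_{\yx}$ is a class function depending only on the eigenvalues of its matrix argument, these become $\vp_{\ya}(\ea)\vp_{\yb}(\eb)$ and $\vp_{\yc}(\ec)$ respectively, completing the verification. The binomial theorem (\cref{thm:the_binomial_theorem}) is what makes the second identity work, and this is the only non-trivial ingredient; the rest is bookkeeping.
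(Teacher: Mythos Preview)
Your proposal is correct and takes exactly the same approach as the paper: define $\Gamma^{\yc}_{\ya\yb} \coloneqq \vp^{\ya\yb}_{\yc}(A,B)$ and invoke \cref{prop:bisummation} for the marginals, with non-negativity and the support condition following from the remarks immediately preceding the proposition. The paper compresses this to a single sentence, but your more explicit verification of each property is entirely in line with its argument.
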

\begin{proof}
    The claim follows directly from \cref{prop:bisummation} after identifying $\Gamma^{\yc}_{\ya\yb} = \vp^{\ya\yb}_{\yc}(A,B) \geq 0$.
\end{proof}
\begin{cor}
    \label{cor:row_col_horn_ineqs}
    Let $(a,b,c) \in \Horn{n}$.
    Then,
    \begin{align}
        \forall \ya,\yb : \quad \vp_{\ya}(\ea)\vp_{\yb}(\eb) &\leq \sum_{\substack{\yc\\c^{\yc}_{\ya\yb} > 0}}\vp_{\yc}(\ec),\\
        \forall \yc : \quad \vp_{\yc}(\ec) &\leq \sum_{\substack{\ya,\yb\\c^{\yc}_{\ya\yb} > 0}}\vp_{\ya}(\ea)\vp_{\yb}(\eb).
    \end{align}
\end{cor}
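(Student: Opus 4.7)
The plan is to deduce both inequalities as immediate consequences of \cref{prop:coupling_condition}. Since $(\ea,\eb,\ec) \in \Horn(n)$ by hypothesis, there exist non-negative Hermitian matrices $A,B,C \in \hermp{n}$ with $A+B=C$ whose eigenvalues are $(\ea,\eb,\ec)$. Setting $\Gamma^{\yc}_{\ya\yb} \coloneqq \vp^{\ya\yb}_{\yc}(A,B)$ produces an array of non-negative real numbers (non-negativity being the observation that $\vp^{\ya\yb}_{\yc}(X,Y) \geq 0$ on $\hermp{n}\times \hermp{n}$, already used in \cref{prop:coupling_condition}) which satisfies the two marginal identities of \cref{prop:bisummation} and vanishes whenever $c^{\yc}_{\ya\yb}=0$ (since $\Pi^{\yc}_{\ya\yb}=0$ in that case). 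Thus both marginal sums may be freely restricted to the indices with $c^{\yc}_{\ya\yb}>0$.

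For the first inequality, I would fix a pair $(\ya,\yb)$, invoke the row-marginal identity
\begin{equation*}
    \vp_{\ya}(\ea)\vp_{\yb}(\eb) = \sum_{\yc} \Gamma^{\yc}_{\ya\yb} = \sum_{\yc:\, c^{\yc}_{\ya\yb}>0} \Gamma^{\yc}_{\ya\yb},
\end{equation*}
and then upper bound each remaining summand by the corresponding column-marginal, using non-negativity of the omitted entries:
\begin{equation*}
    \Gamma^{\yc}_{\ya\yb} \leq \sum_{\ya',\yb'} \Gamma^{\yc}_{\ya'\yb'} = \vp_{\yc}(\ec).
\end{equation*}
Summing yields $\vp_{\ya}(\ea)\vp_{\yb}(\eb) \leq \sum_{\yc:\, c^{\yc}_{\ya\yb}>0}\vp_{\yc}(\ec)$.

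The second inequality is proven by a symmetric argument: fix $\yc$, start from the column-marginal identity $\vp_{\yc}(\ec) = \sum_{\ya,\yb}\Gamma^{\yc}_{\ya\yb}$, restrict to pairs with $c^{\yc}_{\ya\yb}>0$ using the support property, and bound each entry $\Gamma^{\yc}_{\ya\yb}$ above by its row-marginal $\sum_{\yc'}\Gamma^{\yc'}_{\ya\yb} = \vp_{\ya}(\ea)\vp_{\yb}(\eb)$. There is no genuine obstacle in this corollary: the whole content sits in \cref{prop:coupling_condition}, whose non-negativity, support, and marginal properties jointly reduce the statement to a routine manipulation of a bivariate non-negative coupling with a prescribed sparsity pattern. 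The only minor point to keep consistent is the implicit size constraint $\syc=\sya+\syb$, which ensures all sums are finite and confines the indices to those for which the relevant multiplicities even make sense.
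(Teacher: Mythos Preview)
Your proof is correct and follows essentially the same approach as the paper: both invoke \cref{prop:coupling_condition} to obtain the non-negative coupling $\Gamma^{\yc}_{\ya\yb}$ with the stated support and marginal properties, restrict the sums to indices with $c^{\yc}_{\ya\yb}>0$, and bound each entry by the opposite marginal. The paper compresses this into the single observation $\Gamma^{\yc}_{\ya\yb} \leq \min\{\vp_{\ya}(\ea)\vp_{\yb}(\eb), \vp_{\yc}(\ec)\}$, while you spell out the two directions separately, but the content is identical.
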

\begin{proof}
    For all triples $(\ya,\yb,\yc)$, if $\Gamma^{\yc}_{\ya\yb} > 0$, then $c^{\yc}_{\ya\yb} > 0$ and thus $\Gamma^{\yc}_{\ya\yb} \leq \min\{\vp_{\ya}(\ea)\vp_{\yb}(\eb), \vp_{\yc}(\ec)\}$.
    The claim follows from \cref{prop:coupling_condition}.
\end{proof}

\begin{rem}
    Using \cref{prop:coupling_condition}, it is possible to prove even stronger inequalities than those appearing in \cref{cor:row_col_horn_ineqs} by examining the set of triples $(\ya, \yb, \yc) \in (\undomweightsp{n})^{3}$ where $c^{\yc}_{\ya,\yb} \neq 0$.
    We illustrate this point with an example.
    Let $k = \sya + \syb = \syc = 4$, then the following is the table of values for $c^{\yc}_{\ya\yb}$.
    \begin{center}
        \newcommand{\zero}{\cellcolor{white}{\color{gray}$0$}}
        \newcommand{\numb}[1]{\cellcolor{black!80!white}{\color{white}$#1$}}
        \begingroup
        \setlength{\tabcolsep}{3pt}
        \begin{tabular}{|cc||c|c|c|c|c||c|c|c||c|c|c|c||c|c|c||c|c|c|c|c|}
            \hline
            $c^{\yc}_{\ya\yb}$ & $\ya$ &
            $\ydiagram{4}$ & $\ydiagram{3,1}$ & $\ydiagram{2,2}$ & $\ydiagram{2,1,1}$ & $\ydiagram{1,1,1,1}$ &
            $\ydiagram{3}$ & $\ydiagram{2,1}$ & $\ydiagram{1,1,1}$ &
            $\ydiagram{2}$ & $\ydiagram{2}$ & $\ydiagram{1,1}$ & $\ydiagram{1,1}$ &
            $\ydiagram{1}$ & $\ydiagram{1}$ & $\ydiagram{1}$ &
            $\ydiagram{}$ & $\ydiagram{}$ & $\ydiagram{}$ & $\ydiagram{}$ & $\ydiagram{}$ \\
            $\yc$ & $\yb$ &
            $\ydiagram{}$ & $\ydiagram{}$ & $\ydiagram{}$ & $\ydiagram{}$ & $\ydiagram{}$ &
            $\ydiagram{1}$ & $\ydiagram{1}$ & $\ydiagram{1}$ &
            $\ydiagram{2}$ & $\ydiagram{1,1}$ & $\ydiagram{2}$ & $\ydiagram{1,1}$ &
            $\ydiagram{3}$ & $\ydiagram{2,1}$ & $\ydiagram{1,1,1}$ &
            $\ydiagram{4}$ & $\ydiagram{3,1}$ & $\ydiagram{2,2}$ & $\ydiagram{2,1,1}$ & $\ydiagram{1,1,1,1}$ \\ \hline\hline
            $\ydiagram{4}$ & &
            \numb{1} & \zero & \zero & \zero & \zero &
            \numb{1} & \zero & \zero &
            \numb{1} & \zero & \zero & \zero &
            \numb{1} & \zero & \zero &
            \numb{1} & \zero & \zero & \zero & \zero
            \\\hline
            $\ydiagram{3,1}$ &  &
            \zero & \numb{1} & \zero & \zero & \zero &
            \numb{1} & \numb{1} & \zero &
            \numb{1} & \numb{1} & \numb{1} & \zero &
            \numb{1} & \numb{1} & \zero &
            \zero & \numb{1} & \zero & \zero & \zero
            \\\hline
            $\ydiagram{2,2}$ &  &
            \zero & \zero & \numb{1} & \zero & \zero &
            \zero & \numb{1} & \zero &
            \numb{1} & \zero & \zero & \numb{1} &
            \zero & \numb{1} & \zero &
            \zero & \zero & \numb{1} & \zero & \zero
            \\\hline
            $\ydiagram{2,1,1}$ &  &
            \zero & \zero & \zero & \numb{1} & \zero &
            \zero & \numb{1} & \numb{1} &
            \zero & \numb{1} & \numb{1} & \numb{1} &
            \zero & \numb{1} & \numb{1} &
            \zero & \zero & \zero & \numb{1} & \zero
            \\\hline
            $\ydiagram{1,1,1,1}$ &  &
            \zero & \zero & \zero & \zero & \numb{1} &
            \zero & \zero & \numb{1} &
            \zero & \zero & \zero & \numb{1} &
            \zero & \zero & \numb{1} &
            \zero & \zero & \zero & \zero & \numb{1}
            \\\hline
        \end{tabular}
        \endgroup
    \end{center}
    Using this table, one sees the inequalities in \cref{cor:row_col_horn_ineqs} are associated to single columns or single rows, e.g., along the $(\ya,\yb) = (\ydiagram{2}, \ydiagram{1,1})$, we obtain
    \begin{equation}
        \vp_{\ydiagram{2}}(X)\vp_{\ydiagram{1,1}}(Y) \leq \vp_{\ydiagram{3,1}}(X+Y) + \vp_{\ydiagram{2,1,1}}(X+Y).
    \end{equation}
    However, using \cref{prop:coupling_condition}, a stronger inequality is obtained by including all columns covered by the rows associated to the right-hand side of the above inequality, e.g.,
    \begin{align}
        \begin{split}
            &\vp_{\ydiagram{3,1}}(X) + \vp_{\ydiagram{2,1,1}}(X) +
            \vp_{\ydiagram{2}}(X)\vp_{\ydiagram{1,1}}(Y) + \vp_{\ydiagram{1,1}}(X)\vp_{\ydiagram{2}}(Y) +
            \vp_{\ydiagram{3,1}}(Y) + \vp_{\ydiagram{2,1,1}}(Y) \\
            &\qquad\leq \vp_{\ydiagram{3,1}}(X+Y) + \vp_{\ydiagram{2,1,1}}(X+Y).
        \end{split}
    \end{align}
    In general, \cref{prop:coupling_condition} describes a linear system of inequality constraints in the variables $\Gamma^{\yc}_{\ya\yb}, \vp_{\ya}(\ea)\vp_{\yb}(\eb),$ and $\vp_{\yc}(\ec)\}$ which, in principle, be solved for $\vp_{\ya}(\ea)\vp_{\yb}(\eb)$ and $\vp_{\yc}(\ec)$ by standard linear quantifier elimination techniques such as Fourier-Motzkin elimination \cite{dantzig1972fourier}.
    While these additional inequalities are certainly interesting to explore, \cref{thm:tri_iff} establishes that for the purposes of resolving the original Horn problem, it is sufficient to restrict our attention to inequalities of the form \cref{cor:row_col_horn_ineqs}.
\end{rem}

\subsection{Distance bound \& sufficiency}
\label{sec:dist_bound_for_orig_horn}

Here we prove the distance bound stated in \cref{thm:tri_iff}.
To begin we fix an \textit{arbitrary} pair $(\ya,\yb)$ with total size $\sya + \syb = k$ and derive multiple bounds on the product $\vp_{\ya}(\ea)\vp_{\yb}(\eb)$.
Afterwards, we will sum over all such pairs and then obtain the claimed bound on $D(\ea,\eb;\ec)$.

First we bound $\vp_{\ya}(\ea)\vp_{\yb}(\eb)$ in two ways and then apply the eigenvalue separation lemma (specifically \cref{eq:weak_eig_sep_varphi}).
The first bound on $\vp_{\ya}(\ea)\vp_{\yb}(\eb)$ is obtained from relaxing \cref{eq:tri_iff} by considering a Young diagram $\yc^{*}$ of size $k$ which maximizes $\vp_{\yc^{*}}(\ec)$ among those $\yc$ such that $c^{\yc}_{\ya\yb}> 0$ for the given $(a,b,c)$.
As there are at most $\polyk{n}$ Young diagrams of size $k$, we conclude
\begin{equation}
    \label{eq:first_ab_bound}
    \vp_{\ya}(\ea)\vp_{\yb}(\eb) \leq \polyk{n} \vp_{\yc^{*}}(\ec).
\end{equation}
The second bound on $\vp_{\ya}(\ea)\vp_{\yb}(\eb)$ is obtained from \cref{prop:random_matrices} which states that for any $\yc$ such that $c^{\yc}_{\ya\yb} > 0$,
\begin{equation}
    \vp_{\ya}(\ea)\vp_{\yb}(\eb)
    = \frac{\dim V_{\ya} \ot V_{\yb}}{c^{\yc}_{\ya\yb} \dim V_{\yc}}
    \tightdoublesubstack{\expect}{X \sim \orbit_{\ex}}{Y \sim \orbit_{\ey}}\,
    \vp^{\ya\yb}_{\yc}(X,Y).
\end{equation}
To relax this into a usable bound we (i) bound the dimensions by $\frac{\dim V_{\ya} \ot V_{\yb}}{c^{\yc}_{\ya\yb} \dim V_{\yc}} \leq \dim (V_{\ya} \ot V_{\yb}) \leq \polyk{2\binom{n}{2}}$, then (ii) bound the average over $X \in \orbit_{\ex}$ and $Y \in \orbit_{\ey}$ with a maximum, then (iii) apply \cref{cor:row_col_horn_ineqs} to get $\vp^{\ya\yb}_{\yc}(X,Y) \leq \vp_{\yc}(X+Y)$ and then finally (iv) specialize to the optimal $\yc \mapsto \yc^{*}$ to produce
\begin{equation}
    \label{eq:second_ab_bound}
    \vp_{\ya}(\ea)\vp_{\yb}(\eb) \leq \polyk{2\binom{n}{2}}
    \tightdoublesubstack{\mathrm{max}}{X \in \orbit_{\ea}}{Y \in \orbit_{\eb}}
    \vp_{\yc^{*}}(X+Y).
\end{equation}
Then we combine the two bounds on $\vp_{\ya}(\ea)\vp_{\yb}(\eb)$ from \cref{eq:first_ab_bound} and \cref{eq:second_ab_bound} and apply the eigenvalue separation bound \cref{eq:weak_eig_sep_varphi} to obtain
\begin{align}
    \label{eq:ab_intermediate_bound}
    \begin{split}
        \vp_{\ya}(\ea)\vp_{\yb}(\eb)
        &\leq \sqrt{\polyk{n^2}
        \tightdoublesubstack{\mathrm{max}}{X \in \orbit_{\ea}}{Y \in \orbit_{\eb}}
        \vp_{\yc^{*}}(X+Y)\vp_{\yc^{*}}(\ec)},\\
        &\leq \polyk{\frac{n^2}{2}}\polyk{\binom{n}{2}}
        \tightdoublesubstack{\mathrm{max}}{X \in \orbit_{\ea}}{Y \in \orbit_{\eb}}
        \frac{\BC(\eig(X+Y), c)^k}{k!}, \\
        &\leq \polyk{n^2-\frac{n}{2}} \frac{\tec^k}{k!}
        \exp\left(- \frac{k}{8 \tec^2} D(a,b;c)^2\right),
    \end{split}
\end{align}
where the last inequality follows from \cref{eq:bhattacharyya_vs_distance} and the definition $D(a,b;c) = \tightdoublesubstack{\mathrm{min}}{X \in \orbit_{\ea}}{Y \in \orbit_{\eb}}\norm{\eig(X+Y) - c)}_1$.

Now we sum both sides of \cref{eq:ab_intermediate_bound} over all pairs $(\ya,\yb)$ with total size $\sya+\syb=k$ and use that there are at most $\polyk{2n}$ such pairs to obtain
\begin{equation}
    \frac{\tec^k}{k!} \leq \polyk{n^2+\frac{3n}{2}} \frac{\tec^k}{k!}
        \exp\left(- \frac{k}{8 \tec^2} D(a,b;c)^2\right),
\end{equation}
where the left-hand side follows from the trace condition $\tec=\tea+\teb$ and
\begin{equation}
    \label{eq:binomial_trace}
    \sum_{\ell=0}^{k} \sum_{\substack{\ya\vdash \ell\\\yb\vdash k-\ell}} \vp_{\ya}(\ea)\vp_{\yb}(\eb) = \sum_{\ell=0}^{k} \frac{\tea^\ell}{\ell!} \frac{\teb^{k-\ell}}{(k-\ell)!} = \frac{(\tea+\teb)^k}{k!},
\end{equation}
whereas the extra factor of $2n$ in the exponent on the right-hand side is obtained because there are at most $\polyk{n}$ Young diagrams with $n$ rows and size at most $k$ and we are summing over a pair $(\ya,\yb)$ with total size $\sya+\syb=k$.
Rearranging then establishes an upper bound on $D(a,b;c)$ of the form
\begin{equation}
    \frac{D(a,b;c)}{\Tr[c]} \leq \sqrt{8\left(n^2 + \frac{3}{2}n\right) \frac{\ln \polyk{}}{k}},
\end{equation}
which implies \cref{eq:tri_dist_bound} because $\sqrt{8\left(n^2 + \frac{3}{2}n\right)} \leq 2 \sqrt{5} n$ for all $n\geq 1$.

\section{The large \texorpdfstring{$n$}{n} limit \& an entropic inequality}
\label{sec:large_n}

The purpose of this section is to show how to leverage the inverse polynomial bound on $\norm{\abcdefsymbol}$ provided by \cref{thm:6j_estimates} to derive inequalities on the entropies of the eigenvalues.
Our strategy will be to prove an upper bound on the $6j$ symbol which depends only on the dimensions of the underlying irreducible representations and the multiplicity spaces between them.
By combining this upper bound with the lower bound provided by \cref{thm:6j_estimates} (specifically the $n$-independent bound in \cref{eq:n_independent_inverse_poly}), we will prove the following.
\begin{thm}
    \label{thm:entropic_ineq}
    Let $(\ea,\eb,\ec,\ed,\ee,\ef) \in \Tetra^{+}(n)$ be tetrahedral eigenvalues.
    Then
    \begin{equation}
        \label{eq:entropic_ineq}
        s(\ec) + s(\ef) - s(\eb) - s(\ee) \geq 0,
    \end{equation}
    where
    \begin{equation}
        \label{eq:little_s_defn}
        s(\ex) \coloneqq - \sum_{i=1}^{n} \ex_i \ln \ex_i.
    \end{equation}
\end{thm}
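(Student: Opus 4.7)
The plan is to combine the $n$-independent inverse polynomial lower bound $\norm{\abcdefsymbolk}_\infty \geq (k+1)^{-6\rank(e)}$ from \cref{thm:6j_estimates} with a dimensional upper bound on $\norm{\abcdefsymbolk}_\infty$, and then extract the entropic inequality by passing first to the large $n$ limit and then to the large $k$ limit. To begin I apply \cref{thm:6j_estimates} to the given $(\ea,\eb,\ec,\ed,\ee,\ef) \in \Tetra^+(n)$ to produce a sequence $(\ya_k, \yb_k, \yc_k, \yd_k, \ye_k, \yf_k)$ of Young diagrams with $\yx_k/k \to x/\Tr(e)$ together with the claimed lower bound. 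Padding eigenvalues with zeros shows $(\ea,\eb,\ec,\ed,\ee,\ef) \in \Tetra^+(n')$ for all $n' \geq n$; the corresponding $U(n')$ $6j$-symbol stabilizes once $n'$ exceeds the row counts of the $\yx_k$, so the lower bound is uniform in $n'$.

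Second, I derive a dimensional upper bound. Using that $\abcdefsymbolk$ is a block of the unitary realizing the canonical isomorphism $\bigoplus_{\yc} C^{\yc}_{\ya \yb} \ot C^{\ye}_{\yc \yd} \cong \bigoplus_{\yf} C^{\ye}_{\ya \yf} \ot C^{\yf}_{\yb \yd}$ (which follows from $\projl \projr = \abcdefsymbol \ot \id_{V_\ye}$), one obtains $\norm{\abcdefsymbolk}_{\infty}^{2} \leq \min(\dim C^{\yc_k}_{\ya_k \yb_k} \dim C^{\ye_k}_{\yc_k \yd_k},\; \dim C^{\ye_k}_{\ya_k \yf_k} \dim C^{\yf_k}_{\yb_k \yd_k})$. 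Applying the Littlewood-Richardson inequality $\dim C^{\yc}_{\ya \yb} \cdot \dim V_{\yc} \leq \dim V_{\ya} \dim V_{\yb}$ converts this into a bound purely in terms of $\dim V_{\yx_k}$. In the $n' \to \infty$ limit, the hook content formula $\dim V^{U(n')}_{\yx_k} \sim (n')^{|\yx_k|}/H_{\yx_k}$ combined with the size conditions $|\ya_k| + |\yb_k| = |\yc_k|$ etc.\ causes the powers of $n'$ to cancel, leaving an inequality among hook products $H_{\yx_k}$.

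Third, I take $k \to \infty$. Using Stirling together with the asymptotic $\frac{1}{|\yx_k|} \ln \dim W_{\yx_k} \to s(p^x)$ (where $p^x = x/\Tr(x)$), and the identity $\Tr(x) \, s(p^x) = s(x) + \Tr(x) \ln \Tr(x)$, the hook-product inequalities translate into inequalities among the $s(x)$. The remaining $\Tr(x) \ln \Tr(x)$ terms are handled via the trace identity $\Tr(c) + \Tr(f) = \Tr(b) + \Tr(e)$ combined with the majorization $(\Tr(e), \Tr(b)) \succ (\Tr(c), \Tr(f))$, which follows from $\Tr(e) \geq \max(\Tr(c), \Tr(f)) \geq \min(\Tr(c), \Tr(f)) \geq \Tr(b)$ and the convexity of $x \mapsto x \ln x$.

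The main obstacle will be identifying a dimensional upper bound on $\norm{\abcdefsymbolk}_\infty^{2}$ whose $n' \to \infty$ asymptotic is precisely matched to the strong-subadditivity inequality $s(c) + s(f) \geq s(b) + s(e)$. The naive bound $\norm{\abcdefsymbolk}_\infty^{2} \leq \dim V_{\ya_k} \dim V_{\yb_k} \dim V_{\yd_k} / \dim V_{\ye_k}$ yields only the subadditivity $s(a) + s(b) + s(d) \geq s(e)$; extracting the genuinely stronger SSA will require a finer bound that jointly uses both block decompositions of the $6j$-symbol in order to capture the structural role of $B$ as the ``middle'' matrix of the triple $(A, B, D)$.
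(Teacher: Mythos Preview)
Your overall strategy matches the paper's: combine the $n$-independent inverse-polynomial lower bound from \cref{thm:6j_estimates} with a dimensional upper bound on the $6j$-symbol, send $n\to\infty$ to reduce to hook products, then send $k\to\infty$ via Stirling. The gap is precisely where you flag it yourself: you never produce an upper bound whose large-$n$ limit has the SSA shape $\dim V_{\yc}\dim V_{\yf}/(\dim V_{\yb}\dim V_{\ye})$. Your two candidate bounds,
\[
\norm{\abcdefsymbolk}_{\infty}^{2}\;\le\;c^{\yc_k}_{\ya_k\yb_k}\,c^{\ye_k}_{\yc_k\yd_k}
\qquad\text{and}\qquad
\norm{\abcdefsymbolk}_{\infty}^{2}\;\le\;c^{\ye_k}_{\ya_k\yf_k}\,c^{\yf_k}_{\yb_k\yd_k},
\]
after the Littlewood--Richardson estimate $c^{\yc}_{\ya\yb}\le \dim V_{\ya}\dim V_{\yb}/\dim V_{\yc}$, both collapse to the \emph{same} ratio $\dim V_{\ya}\dim V_{\yb}\dim V_{\yd}/\dim V_{\ye}$; taking the minimum, product, or geometric mean of the two gives nothing new. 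So as written the argument can only ever reach subadditivity $s(\ea)+s(\eb)+s(\ed)\ge s(\ee)$, not \cref{eq:entropic_ineq}.

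The missing ingredient is \cref{lem:sixj_ssa_bound}: a Cauchy--Schwarz inequality applied across the $V_{\yb}$-wire of the diagram for $\norm{\abcdefsymbol}_2^2$, which splits the trace into two pieces that Schur's lemma evaluates as $c^{\yc}_{\ya\yb}c^{\ye}_{\yc\yd}\dim V_{\yc}/\dim V_{\yb}$ and $c^{\ye}_{\ya\yf}c^{\yf}_{\yb\yd}\dim V_{\yf}/\dim V_{\ye}$. This is exactly the ``finer bound that jointly uses both block decompositions'' you are looking for, and the point is that it is \emph{not} obtainable from the block-of-a-unitary observation alone: one must open up the $6j$-symbol and cut along the internal $V_{\yb}$ edge. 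After that, the large-$n$ limit (\cref{cor:harrow_ssa_lim}) gives $H_{\yb}H_{\ye}/(H_{\yc}H_{\yf})$ directly, and the Littlewood--Richardson prefactor is harmless because the diagrams have at most $n$ rows (the original, fixed $n$), so those coefficients are polynomial in $k$.

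A minor remark on your Step 3: the majorization detour is unnecessary. If you normalise all four hook products by the common scale $k$ (rather than by each $|\yx_k|$ separately), the size identity $|\yc_k|+|\yf_k|=|\yb_k|+|\ye_k|$ makes the $\Tr(x)\ln\Tr(x)$-type terms cancel identically, and you land on $s(\ec)+s(\ef)-s(\eb)-s(\ee)\ge 0$ with no further work.
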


We begin with our upper bound on the $6j$ symbol.
\begin{lem}
    \label{lem:sixj_ssa_bound}
    Let $(\ya,\yb,\yc,\yd,\ye,\yf) \in (\undomweightsp{n})^{6}$.
    Then
    \begin{equation}
        \norm{\abcdefsymbol}_{2}^{2}
        \leq \left(c_{\ya,\yf}^{\ye}c_{\yb,\yd}^{\yf}c_{\ya,\yb}^{\yc}c_{\yc,\yd}^{\ye}\right)^{\frac{1}{2}} \frac{\dim(V_{\yc}^n) \dim(V_{\yf}^n)}{\dim(V_{\yb}^n) \dim(V_{\ye}^n)}.
    \end{equation}
\end{lem}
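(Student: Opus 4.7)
The plan is to start from the identity $\norm{\abcdefsymbol}_{2}^{2} \dim V_\ye = \Tr[\projl \projr]$, which follows immediately from the defining isomorphism $\projl \projr \simeq \abcdefsymbol \otimes \id_{V_\ye}$ in \cref{eq:prod_proj_6j} by taking the squared Hilbert--Schmidt norm of both sides and using cyclicity of the trace together with $\projl^2 = \projl$, $\projr^2 = \projr$ to reduce $\norm{\projl \projr}_2^2 = \Tr[\projr \projl \projr] = \Tr[\projl \projr]$. This reduces the lemma to an upper bound on the trace of a product of two orthogonal projections on $V_\ya \otimes V_\yb \otimes V_\yd$.

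Next, I would decompose each projection into commuting factors. Since the $\ye$-isotypic projector $\Pi^\ye$ on $V_\ya \otimes V_\yb \otimes V_\yd$ is $U(n)$-equivariant, it commutes with the partial isotypic projectors $Q^\yc_{\ya\yb} \otimes \id_{V_\yd}$ and $\id_{V_\ya} \otimes Q^\yf_{\yb\yd}$, yielding the factorizations $\projl = (Q^\yc_{\ya\yb} \otimes \id)\Pi^\ye$ and $\projr = (\id \otimes Q^\yf_{\yb\yd})\Pi^\ye$. A symmetric Cauchy--Schwarz bound $\Tr[\projl \projr] \leq \sqrt{\Tr[\projl]\Tr[\projr]}$, combined with the dimension formulas $\Tr[\projl] = c^\yc_{\ya\yb} c^\ye_{\yc\yd} \dim V_\ye$ and $\Tr[\projr] = c^\ye_{\ya\yf} c^\yf_{\yb\yd} \dim V_\ye$, already delivers the multiplicity square root $(c^\yc_{\ya\yb} c^\ye_{\yc\yd} c^\ye_{\ya\yf} c^\yf_{\yb\yd})^{1/2}$ appearing in the stated bound.

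The main obstacle lies in recovering the asymmetric dimension ratio $\dim V_\yc \dim V_\yf / (\dim V_\yb \dim V_\ye)$, which the naive Cauchy--Schwarz misses because it treats the opposite-edge pairs $(\yc,\yf)$ and $(\yb,\ye)$ of the tetrahedron of \cref{fig:tetrahedron_labelling_scheme} symmetrically. The plan is to exploit that the normalization convention \cref{eq:prod_proj_6j} scales $\abcdefsymbol$ by a factor proportional to $\sqrt{\dim V_\yc \dim V_\yf}$ relative to the canonical unitary change-of-basis on the $\ye$-multiplicity space, and to couple this with the $U(n)$ analog of the Racah orthogonality for the opposite-edge pair $(\yb,\ye)$; for $SU(2)$ the latter specializes to $\{6j\}^2 \leq 1/((2j_2+1)(2j_5+1))$, so that after reinserting the $\sqrt{\dim V_\yc \dim V_\yf}$ scaling one recovers the full asymmetric factor $\dim V_\yc \dim V_\yf / (\dim V_\yb \dim V_\ye)$. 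The resulting ratio is precisely what is required downstream: upon taking $\log$, dividing by $k$, and invoking the Weyl-dimension asymptotics $\tfrac{1}{\syx}\log \dim V_\yx^n \sim \log(n/\syx) + 1 + H(\yx/\syx)$, the $\log n$ contributions cancel (since $\syc + \syf = \syb + \sye$) and the combinatorial terms collapse precisely to $s(\ec) + s(\ef) - s(\eb) - s(\ee)$, delivering \cref{thm:entropic_ineq} after combination with the inverse-polynomial lower bound on $\norm{\abcdefsymbol}_\infty$ from \cref{thm:6j_estimates}.
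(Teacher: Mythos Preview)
Your first two paragraphs are sound: the identity $\norm{\abcdefsymbol}_2^2 \dim V_\ye = \Tr[\projl\projr]$ is correct, and the symmetric Cauchy--Schwarz estimate $\Tr[\projl\projr] \le \sqrt{\Tr[\projl]\Tr[\projr]}$ indeed gives $\norm{\abcdefsymbol}_2^2 \le (c^\yc_{\ya\yb}c^\ye_{\yc\yd}c^\ye_{\ya\yf}c^\yf_{\yb\yd})^{1/2}$. The gap is in the third paragraph, where you try to manufacture the missing factor $\dim V_\yc \dim V_\yf / (\dim V_\yb \dim V_\ye)$. Your claim that the normalization in \cref{eq:prod_proj_6j} introduces a scaling of $\sqrt{\dim V_\yc \dim V_\yf}$ relative to the unitary recoupling map is false: with the paper's convention, $\abcdefsymbol$ \emph{is} the $(\yf,\yc)$ block of the unitary change of basis on the $\ye$-multiplicity space, with no extra dimension weight. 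Consequently the Racah-type orthogonality you invoke yields only the multiplicity bound you already have, and nothing in your outline produces the asymmetric dimension ratio. Since that ratio is exactly what drives the entropic inequality in \cref{thm:entropic_ineq}, this is a genuine hole, not a cosmetic one.

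The paper obtains the ratio by applying Cauchy--Schwarz at a different place. Rather than bounding $\Tr[\projl\projr]$ by the geometric mean of the two projector ranks, it writes $\norm{\abcdefsymbol}_2^2 \dim V_\ye$ as a closed tensor network built from the intertwiners $V_\ya\otimes V_\yb \to V_\yc$, $V_\yc\otimes V_\yd \to V_\ye$, $V_\yb\otimes V_\yd \to V_\yf$, $V_\ya\otimes V_\yf \to V_\ye$ (and their adjoints), and then cuts along a single internal contraction so that each of the two resulting pieces contains a $U(n)$-equivariant endomorphism of an irreducible: one piece has a sub-diagram $V_\yb \to V_\yb$, the other a sub-diagram $V_\ye \to V_\ye$. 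Schur's lemma then replaces each such endomorphism by its normalized trace times the identity, contributing the factors $1/\dim V_\yb$ and $1/\dim V_\ye$; the remaining closed loops evaluate to $\dim V_\yc$, $\dim V_\yf$, and the four multiplicities. That is the mechanism you are missing: the dimension ratio does not come from any normalization of the $6j$-symbol or from an orthogonality relation, but from choosing the Cauchy--Schwarz cut so that Schur's lemma can act on $V_\yb$ and $V_\ye$.
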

\begin{proof}
    Here we provide a diagrammatic proof of the claimed inequality.
    For the sake of clarity, we abbreviate the irreducible representation labels, $V_{\yx}$, and omit the multiplicity space labels on the intermediate steps.
    \begin{equation}
        \includegraphics[width=\textwidth]{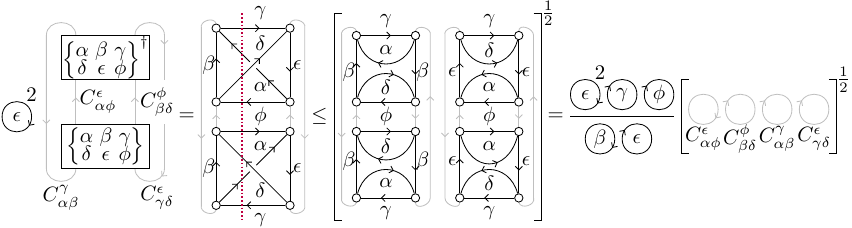}
    \end{equation}
    The left equality follows from the definition of the $6j$ symbol.
    The middle inequality follows from an application of the Cauchy-Schwarz inequality with respect to the annotated contraction.
    The right equality follows from applying Schur's lemma to components of the diagram mapping $V_{\yb}$ to itself on the left and $V_{\ye}$ to itself on the right.
    Note that the factors of $\dim(V_{\ye})^2$ appearing on left-most and right-most sides of the above inequality cancel producing the claimed inequality.
\end{proof}

An immediate application of \cref{lem:sixj_ssa_bound} is to combine it with \cref{thm:6j_estimates}.
If $(\ea,\eb,\ec,\ed,\ee,\ef)$ are tetrahedral eigenvalues, then by \cref{lem:sixj_ssa_bound} and \cref{thm:6j_estimates} there exists a converging sequence of Young diagrams $(\ya_k, \yb_k, \yc_k, \yd_k, \ye_k, \yf_k)$ such that
\begin{equation}
    \label{eq:combined_bound}
    \frac{1}{q(k)} \leq \left(c_{\ya_k,\yf_k}^{\ye_k}c_{\yb_k,\yd_k}^{\yf_k}c_{\ya_k,\yb_k}^{\yc_k}c_{\yc_k,\yd_k}^{\ye_k}\right)^{\frac{1}{2}} \frac{\dim(V_{\yc_k}^n) \dim(V_{\yf_k}^n)}{\dim(V_{\yb_k}^n) \dim(V_{\ye_k}^n)}.
\end{equation}
where $q(k)$ is some polynomial in $k$ independent of the dimension $n$ (see \cref{eq:n_independent_inverse_poly}).
Remarkably, the Littlewood-Richardson coefficients $c_{\ya_k,\yf_k}^{\ye_k}$, $c_{\yb_k,\yd_k}^{\yf_k}$, $c_{\ya_k,\yb_k}^{\yc_k}$, and $c_{\yc_k,\yd_k}^{\ye_k}$ are also \textit{independent} of the dimension parameter $n$ defining the group $U(n)$ (assuming $n$ is at least as large as the maximum depth of any of the involved Young diagrams.
This observation provides us with an opportunity to optimize the above bound by varying the value of $n$.

In particular, it will useful to better understand how the dimension, $\dim(V_{\yx})$, of the irreducible representation $V_{\yx}$ of $U(n)$ grows as $n$ becomes much larger than the number of rows of $\yx$.
In general, for fixed $\yx$, the function sending $n$ to $\dim V_{\yx}$ can grow at an exponential rate with increasing $n$.
One can find estimates for $\dim(V_{\yx})$ in the regime where $n$ is large compared to ${\syx}$ and also the last row $\yx_{\min}$ is large compared to the depth of $\yx$ in paper by \Citeauthor{itzykson1966unitary} \cite[Section III]{itzykson1966unitary}.
Here we use an elegant presentation of this scaling, which relates the asymptotics of $\dim(V_{\yx})$ to the value of $\dim(W_{\yx})$ (see \cite[Lemma 2]{harrow2023approximate} or \cite[Equations (25,26)]{childs2007weak}).
\begin{prop}
    \label{prop:harrow_formula}
    Let $\yx$ be a Young diagram with depth at most $n$. Then
    \begin{equation}
        \frac{\dim(V_{\yx})}{n^{{\syx}}} = \frac{\dim(W_{\yx})}{{\syx}!}\prod_{(i,j) \in \yx} \left(1 + \frac{j-i}{n}\right).
    \end{equation}
\end{prop}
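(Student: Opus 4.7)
The plan is to reduce the stated formula to the classical hook-content formula for $\dim(V_\yx)$ and then derive that formula from Weyl's dimension formula (\cref{eq:weyl_dim}) together with the hook-length expression $\dim(W_\yx) = \syx!/H_\yx$ (\cref{eq:dim_W_hook_product}). Substituting $\dim(W_\yx)/\syx! = 1/H_\yx$ into the claimed identity rewrites it as
$$
\dim(V_\yx) = \frac{1}{H_\yx}\prod_{(i,j)\in \yx}(n+j-i),
$$
so it suffices to verify this hook-content expression for $\dim(V_\yx)$.

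For this, I would introduce the shifted parts $\ell_i \coloneqq \yx_i + n - i$ for $i = 1, \ldots, n$ (with $\yx_i = 0$ when $i$ exceeds the depth of $\yx$). Under this substitution, Weyl's formula (\cref{eq:weyl_dim}) becomes a ratio of two Vandermonde-type products,
$$
\dim(V_\yx) = \frac{\prod_{1\leq i<j\leq n}(\ell_i - \ell_j)}{\prod_{i=1}^n (n-i)!},
$$
since the denominator $\prod_{i<j}(j-i)$ rearranges as $\prod_{i=1}^n (n-i)!$. The content product can be factored row by row: the boxes in row $i$ contribute $\prod_{j=1}^{\yx_i}(n-i+j) = \ell_i!/(n-i)!$, hence
$$
\prod_{(i,j)\in \yx}(n+j-i) = \prod_{i=1}^n \frac{\ell_i!}{(n-i)!}.
$$

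The remaining ingredient, which I expect to be the main obstacle, is the classical identity
$$
H_\yx = \frac{\prod_{i=1}^n \ell_i!}{\prod_{1\leq i<j\leq n}(\ell_i - \ell_j)}.
$$
Given this, dividing the above content product by $H_\yx$ reproduces the Vandermonde-ratio expression for $\dim(V_\yx)$, completing the proof. The identity is consistent with the $n$-independence of $H_\yx$: extending $\yx$ by an empty $(n+1)$-th row so that $\ell_{n+1}=0$ multiplies both numerator and denominator by the common factor $\prod_{i=1}^n \ell_i$, leaving the ratio invariant. It therefore suffices to verify it when $n$ equals the depth of $\yx$. One route is a direct combinatorial bijection between boxes of $\yx$ (with their arm and leg lengths) and pairs of $\ell$-indices, as in Macdonald's treatment; another is to apply Frobenius's determinantal character formula for $\dim(W_\yx)$ and match it against the hook-length formula already invoked as \cref{eq:dim_W_hook_product}. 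Either route produces the identity, after which the remainder of the argument is pure algebraic rearrangement.
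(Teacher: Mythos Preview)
Your proposal is correct and follows essentially the same approach as the paper. The paper's proof compares Weyl's dimension formula for $\dim(V_\yx)$ with the Frobenius dimension formula $\dim(W_\yx) = \syx!\,\Delta(\yx+\rho)/\prod_i(\yx+\rho)_i!$, which in your notation $\ell_i = \yx_i + n - i$ is precisely the identity $H_\yx = \prod_i \ell_i!/\prod_{i<j}(\ell_i-\ell_j)$ that you isolate as the key step; you have simply unpacked the same comparison more explicitly through the hook-content formula, and your second suggested route for that identity (Frobenius's formula matched against the hook-length formula) is exactly what the paper invokes by citation.
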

\begin{proof}
    The statement follows from comparing the Weyl dimension formula for $\dim(V_{\yx})$ given in \cref{eq:frob_dim} with the Frobenius dimension formula for $\dim(W_{\yx})$ given in \cite[Corollaries 9.1.5]{goodman2000representations}:
    \begin{equation}
        \label{eq:frob_dim}
        \dim(W_{\yx}) = \frac{{\syx}!\Delta(\yx + \rho)}{\prod_{i=1}^{n}(\yx+\rho)_i!}.
    \end{equation}
    That this formula for $\dim(W_{\yx})$ does not depend on $n$ for $n \geq \ell(\yx)$ and coincides with the hook length formula given earlier is proved by \cite[Corollary 9.1.6]{goodman2000representations}.
    For additional details, see also \cite[Appendix A]{harrow2023approximate}.
\end{proof}
From \cref{prop:harrow_formula} we obtain the following corollary.
\begin{cor}
    \label{cor:harrow_ssa_lim}
    Let $\yc,\yf,\yb,\ye$ be a quadruple of Young diagrams such that
    \begin{equation}
        \label{eq:balanced_sizes}
        {\syc}+{\syf} = {\syb}+{\sye}.
    \end{equation}
    Then
    \begin{equation}
        \lim_{n\to\infty}
        \frac{\dim(V_{\yc}^n) \dim(V_{\yf}^n)}{\dim(V_{\yb}^n) \dim(V_{\ye}^n)} =
        \frac{{\syb}!{\sye}!}{{\syc}!{\syf}!}
        \frac{\dim(W_{\yc}) \dim(W_{\yf})}{\dim(W_{\yb}) \dim(W_{\ye})}
        = \frac{H_{\yb}H_{\ye}}{H_{\yc}H_{\yf}}
    \end{equation}
\end{cor}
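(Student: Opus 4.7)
The plan is to apply Proposition \ref{prop:harrow_formula} to each of the four dimensions appearing on the left-hand side and then let $n\to\infty$. Concretely, for any Young diagram $\yx$ with depth at most $n$, Proposition \ref{prop:harrow_formula} gives
\begin{equation*}
    \dim(V_\yx^n) = n^{|\yx|}\,\frac{\dim(W_\yx)}{|\yx|!}\prod_{(i,j)\in\yx}\!\left(1+\tfrac{j-i}{n}\right),
\end{equation*}
so substituting into the ratio and grouping $W$-dimensions and products separately, I obtain
\begin{equation*}
    \frac{\dim(V_\yc^n)\dim(V_\yf^n)}{\dim(V_\yb^n)\dim(V_\ye^n)}
    = n^{(|\yc|+|\yf|)-(|\yb|+|\ye|)}\,\frac{|\yb|!\,|\ye|!}{|\yc|!\,|\yf|!}\,\frac{\dim(W_\yc)\dim(W_\yf)}{\dim(W_\yb)\dim(W_\ye)}\,R_n,
\end{equation*}
where $R_n$ denotes the ratio of the four Weyl-type products $\prod_{(i,j)\in\yx}(1+(j-i)/n)$.

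The balanced size condition \eqref{eq:balanced_sizes}, $|\yc|+|\yf|=|\yb|+|\ye|$, is exactly what is needed to kill the power of $n$, reducing the prefactor to $1$. The factor $R_n$ is a ratio of finitely many terms (the total number of boxes is fixed, independent of $n$), and each individual factor $1+(j-i)/n$ tends to $1$ as $n\to\infty$; therefore $R_n\to 1$. Passing to the limit yields the first claimed equality
\begin{equation*}
    \lim_{n\to\infty}\frac{\dim(V_\yc^n)\dim(V_\yf^n)}{\dim(V_\yb^n)\dim(V_\ye^n)}
    = \frac{|\yb|!\,|\ye|!}{|\yc|!\,|\yf|!}\,\frac{\dim(W_\yc)\dim(W_\yf)}{\dim(W_\yb)\dim(W_\ye)}.
\end{equation*}

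For the second equality, I invoke the hook-length formula \eqref{eq:dim_W_hook_product}, $\dim(W_\yx)=|\yx|!/H_\yx$, applied to each of $\yc,\yf,\yb,\ye$. The factorials then cancel pairwise against the prefactor, leaving precisely $H_\yb H_\ye/(H_\yc H_\yf)$, which completes the proof. There is no real obstacle here beyond being careful that $n$ is eventually large enough to exceed the depths of all four diagrams (so that Proposition \ref{prop:harrow_formula} applies), which is automatic since we are taking $n\to\infty$ and the diagrams are fixed.
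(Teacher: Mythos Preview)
Your proof is correct and follows essentially the same approach as the paper: apply Proposition~\ref{prop:harrow_formula} to each of the four diagrams, use the balanced-size hypothesis to cancel the power of $n$, observe that the finite product factors $\prod_{(i,j)\in\yx}(1+(j-i)/n)$ tend to $1$, and finish with the hook-length formula. The paper phrases the limit slightly more succinctly as $\dim(V_\yx)/n^{\syx}\to \dim(W_\yx)/\syx!=1/H_\yx$ for each diagram and then notes the $n$-prefactor cancels, but the content is identical.
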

\begin{proof}
    From \cref{prop:harrow_formula} one evidently has for any Young diagram $\yx$ the limit
    \begin{equation}
        \lim_{n\to\infty} \frac{\dim(V_{\yx})}{n^{{\syx}}} = \frac{\dim(W_{\yx})}{{\syx}!} = \frac{1}{H_{\yx}}
    \end{equation}
    where $H_{\yx}$ is the product of the hook lengths in $\yx$ as defined by \cref{eq:dim_W_hook_product}.
    The claim then follows because the missing prefactor, $\frac{n^{{\syb}}n^{{\sye}}}{n^{{\syc}}n^{{\syf}}}$, is equal to one by \cref{eq:balanced_sizes}.
\end{proof}
We are now in a position to prove the main result of this section.
\begin{proof}[Proof of \cref{thm:entropic_ineq}]
    As $(\ea,\eb,\ec,\ed,\ee,\ef)$ are tetrahedral, the inverse polynomial bound on $\norm{\abcdefsymbol}$ for tetrahedral $(\ea,\eb,\ec,\ed,\ee,\ef)$ provided by \cref{thm:6j_estimates} implies it must at least be non-zero and thus \cref{eq:balanced_sizes} holds.
    Therefore, by \cref{cor:harrow_ssa_lim} and the discussion surrounding the bound in \cref{eq:combined_bound} we conclude that along the convergent sequence of Young diagrams,
    \begin{equation}
        \label{eq:hook_ratio_limit}
        \lim_{k\to\infty} \frac{1}{k} \ln \left(\frac{H_{\yb_k}H_{\ye_k}}{H_{\yc_k}H_{\yf_k}}\right) \geq 0
    \end{equation}
    provided the limit.
    To prove the limit exists, an application of Stirling's approximation, $\ln n! \approx n \ln n - n$ \cite{robbins1955remark}, to the approximation formula $H_{\yx} \approx \prod_{i=1}^{n}\yx_i!$ produces for any constant $\eta$ the formula
    \begin{equation}
        \frac{1}{\eta}\ln H_{\yx} \approx (\ln \eta - 1) \syx + \sum_{i=1}^{n} \frac{\yx_i}{\eta}\ln \frac{\yx_i}{\eta}
    \end{equation}
    Because ${\syc}+{\syf} = {\syb}+{\sye}$ (\cref{eq:balanced_sizes}), the $(\ln \eta - 1)$ terms will cancel in \cref{eq:hook_ratio_limit}.
    Then using the fact that that $(\yb_k, \yc_k, \ye_k, \yf_k)$ converge in proportion to the eigenvalues $(\eb,\ec,\ee,\ef)$ (with proportionality constant $\eta = \frac{k}{\tee}$), we conclude that \cref{eq:hook_ratio_limit} has a limit and moreover it equals the left-hand side of \cref{eq:entropic_ineq}.
\end{proof}

\begingroup
\setlength{\emergencystretch}{2em}
\printbibliography
\endgroup

\end{document}